\newcommand{\HOX}[1]{\marginpar{\footnotesize sharp1}}
\newtheorem{thm}{Theorem}[section]
\newtheorem{lem}[thm]{Lemma}
\newtheorem{prop}[thm]{Proposition}
\newtheorem{exmp}[thm]{Example}
\newtheorem{defn}[thm]{Definition}
\newtheorem{rem}[thm]{Remark}
\theoremstyle{definition}
\numberwithin{equation}{section}
\newcommand{\thmref}[1]{Theorem~\ref{sharp1}}
\newcommand{\secref}[1]{\S\ref{sharp1}}
\newcommand{\lemref}[1]{Lemma~\ref{sharp1}}
\renewcommand{\t}[1]{\tilde{sharp1}}
\newcommand{\pM}{{\partial M}}
\newcommand{\dbtilde}[1]{\tilde{\tilde{sharp1}}}
\def\hat{\widehat}
\def\tilde{\widetilde}
\def \bfo {\begin {eqnarray*} }
\def \efo {\end {eqnarray*} }
\def \ba {\begin {eqnarray*} }
\def \ea {\end {eqnarray*} }
\def \beq {\begin {eqnarray}}
\def \eeq {\end {eqnarray}}
\def \supp {\hbox{supp }}
\def \dist {\hbox{dist}}
\def \det {\hbox{det}}
\def \e {\varepsilon}
\def\hat{\widehat}
\def\tilde{\widetilde}
\def \bfo {\begin {eqnarray*} }
\def \efo {\end {eqnarray*} }
\def \ba {\begin {eqnarray*} }
\def \ea {\end {eqnarray*} }
\def \beq {\begin {eqnarray}}
\def \eeq {\end {eqnarray}}
\def \supp {\hbox{supp }}
\def \dist {\hbox{dist}}
\def \det {\hbox{det}}
\def \e {\varepsilon}
\newcommand{\intM}{M^{{o}}}
\newcommand{\Char}{\mathrm{Char}}
\newcommand{\uinc}{u^\mathrm{inc}}
\newcommand{\uref}{u^\mathrm{ref}}
\newcommand{\tM}{M_e}
\newcommand{\xzp}{x_0'}
\newcommand{\xkp}{x_k'}
\newcommand{\xikp}{{\xi^k}'}
\newcommand{\xik}{{\xi^k}}
\newcommand{\mR}{\mathbb{R}}
\newcommand{\mN}{\mathcal{N}}
\newcommand{\xizp}{{\xi^0}'}
\newcommand{\xiz}{{\xi^0}}
\newcommand{\mH}{\mathcal{H}}
\newcommand{\blue}{\color{blue}}
\begin{document}
\title[Partial Data Dirichlet-to-Neumann Map on Lorentzian Manifold]{The Dirichlet-to-Neumann map for Lorentzian Calder\'on problems with data on
disjoint sets}

\author[]{Yuchao Yi, Yang Zhang}
\address{}
\email{}


\begin{abstract}
    We consider the restricted Dirichlet-to-Neumann map \( \Lambda^{U,V}_{g,A,q} \) for the wave equation with magnetic potential \( A \) and scalar potential \( q \), on an admissible Lorentzian manifold \( (M, g) \) of dimension \( n \geq 3 \) with boundary.
    Here \( U \) and \( V \) are disjoint open subsets of \( \partial M \), where we impose the Dirichlet data on \( U \) and measure the Neumann-type data on \( V \).
    We use the gliding rays and microlocal analysis to show that, without any a priori information, one can reconstruct the conformal class of the boundary metric \( g|_{T\partial M \times T\partial M} \) and the magnetic potential \( A|_{T\partial M} \) at recoverable boundary points from \( \Lambda^{U,V}_{g,A,q} \). In particular, the conformal factor and the jet of the metric at those points are determined up to gauge transformations. Moreover, if the metric and the time orientation are known on \( U \) (or \( V \)), then the metric on a larger portion of \( V \) (or \( U \)) can be reconstructed, up to gauge.
\end{abstract}

\maketitle

\tableofcontents

\section{Introduction}
We consider a time-oriented Lorentzian manifold $(M,g)$ of dimension $n$ for $n\ge3$, with a {timelike, strictly null-convex} boundary $\partial M$, see Definition \ref{def: admissible manifold}.
Suppose $g$ has the signature $(-1, 1, \ldots, 1)$.
We consider the wave operator with a first-order perturbation
\[
\Box_{g,A,q} \;=\;
\frac{1}{\sqrt{|\det g|}}
\bigl(\partial_j - i A_j \bigr)\Bigl(\sqrt{|\det g|}\,g^{jk}\bigl(\partial_k - i A_k\bigr)\Bigr)
\;+\; q,
\]
where $A$ is a smooth 1‐form and $q$ is a smooth scalar potential on $M$.
Let $u$ be the outgoing solution of
\begin{equation}\label{eq: wave equation}
        \begin{aligned}
            \Box_{g,A,q} u &= 0 \quad \text{in } M^{\circ}, \\
            u &= f \quad \text{on } \pM,\\
            \text{supp}(u) &\subseteq J^+(\text{supp}(f)),
        \end{aligned}
\end{equation}
where $J^+(\text{supp}(f))$ denotes the causal future of support of $f$, for more details see Section \ref{sec: preliminaries}.
Here by outgoing, we mean the microlocal solution that we choose has singularities propagating along forward bicharacteristics, for more details see Section \ref{subsec: gliding}.
The associated outgoing \emph{Dirichlet-to-Neumann map} (DN map) is defined as
\[
\Lambda_{g,A,q}: f \mapsto (\partial_\nu u-i\langle A, \nu\rangle u)|_{\partial M},
\]
where $\nu$ denotes the unit outward normal vector field to $\pM$.
%
Let $U$ and $V$ be open subsets of $\partial M$.
The goal of this work is to study the inverse problem of recovering $g$ and $A$ up to gauge transformations, from partial boundary data.
More precisely, we impose the Dirichlet boundary conditions $u= f$ on $U$ and measure the corresponding {Neumann‐type} derivative on $V$.
For this purpose, we define the \emph{restricted DN map} as
\[
\Lambda^{U,V}_{g,A,q}: f \mapsto (\partial_\nu u-i\langle A, \nu\rangle u)|_{V} = \partial_{\nu}u|_V - i(\langle A, \nu \rangle f)|_{U \cap V}.
\]
Throughout the paper, we shall mainly focus on the case where $U \cap V = \emptyset$, unless stated otherwise. In this case the DN map is simply given by
\[
\Lambda^{U,V}_{g,A,q}(f) = \partial_{\nu}u|_V.
\]

Boundary determination problems have been studied in Riemannian settings, with well-established results on uniqueness, stability, and constructive reconstruction methods, see \cite{lassas2003semiglobal, stefanov2005boundary, uhlmann2003boundary} for cases with strictly convex boundaries, and \cite{vargo2009lens, herreros2011scattering} for cases with concave points and analytic structures.
In particular, for smooth metrics in dimensions $n\geq 3$, not only the full jet of the metric at the boundary but also the metric in the interior near a strictly convex point can be recovered, which leads to global rigidity results under foliation conditions, see \cite{stefanov2016boundary,uhlmann2016inverse, stefanov2021local}.
Besides, there are also many other works addressing lens and scattering rigidity, including \cite{burago2010boundary, croke2014scattering,croke2004rigidity,croke2000local,croke2011lens,guillarmou2017lens,michel1981rigidite,muhometov1981problem,muhometov1978problem,pestov2005two,stefanov1998rigidity,stefanov2009local}.


In contrast, the case for more general Lorentzian metrics remains less understood, with fewer results on boundary determination with partial data.
The recovery of stationary metrics from the time separation function is studied in
\cite{andersson1996boundary} for two-dimensional product Lorentzian manifolds, and in \cite{lassas2016determination} for universal covering spaces of real-analytic Lorentzian manifolds, in \cite{uhlmann2021travel} for manifolds with dimensions three and higher.
The recovery of stationary metrics from scattering relation (also referred to as lens relation) is studied in \cite{stefanov2024lorentzian}, where the problem is reduced to boundary or lens rigidity of magnetic systems on the base via null-geodesics. Then it is considered in \cite{munoz2024} via timelike geodesics and MP-systems.
Most recently, it is proved in \cite{stefanov2024boundary} that the jets of smooth Lorentzian metrics on the boundary near a lightlike strictly convex point can be recovered from the scattering relation, up to a gauge transformation.
The scattering relation of null-geodesics between two Cauchy surfaces for metrics with small perturbation is considered in \cite{wang2023rigidity}.

These Lorentzian rigidity problems are closely related to the so-called Lorentzian Calder\'{o}n  problems, which are analogues for wave equations of the classical Calder\'{o}n  problems for elliptic equations.
In a classical Calder\'{o}n  problem, we consider the recovery of a conductivity or the Riemannian manifold $(M,g)$ from the DN map associated with a Laplace-Beltrami equation. 
Results for real-analytic Riemannian metrics can be found in \cite{lassas2001determining, lassas2003dirchlet,lee1989determining} and for metrics with fixed conformal structure can be found in \cite{dos2009limiting, ferreira2016calderon,carstea2023remarks}. 
For more results, see, for example, surveys \cite{GUsurvey, MR3098658, MR3221605}. 
In a Lorentzian Calder\'{o}n  problem, we consider the recovery of the Lorentzian manifold $(M,g)$ from the DN map associated with a wave equation.
For ultra-static manifold with stationary metrics or metrics real-analytic in $t$,
this problem has been extensively studied in the literature, including
\cite{gel1954some,alinhac1983non,belishev1987approach,eskin2007inverse,kachalov2001inverse,kurylev2018inverse,lassas2018inverse,lassas2014inverse,lassas2010inverse},
based on Gelfand problems, inverse spectral methods, or the Boundary Control method.
In \cite{alexakis2022} and \cite{alexakis2024lorentzian}, the assumption of real-analyticity in $t$ is replaced by bounds on the Lorentzian curvature and the Lorentzian Calder\'{o}n problem was solved in a fixed conformal class.
In \cite{oksanen2024rigidity}, the global determination using smaller amount of measurements from the DN map is proved, when the metric is globally hyperbolic in $\mR^{1+n}$ and agrees with the Minkowski one outside a compact set.

While interior determination in a general Lorentzian manifold remains open, boundary determination has been studied in \cite{Stefanov2018}.
Such boundary determination is typically the first step towards interior recovery, as it provides essential geometric information on the boundary.
It is shown in \cite{Stefanov2018} that for an admissible Lorentzian manifold, the DN map determines the jets of $g, A, q$ on the boundary up to a gauge transformation in a stable way.
Here the reconstruction is local or semilocal, but with sources and measurements on the same sets.
This allows one to use the pseudodifferential part of the DN map for the reconstruction.
In this work, we consider boundary determination from the DN map, where the sources and measurements are supported in disjoint subsets of the boundary.
This setting models scenarios in mathematical physics and applied fields, where full boundary access is not available.
For example, in geophysical imaging, sensors may need to be placed far from the sources due to physical constraints, such as in oil exploration as mentioned in \cite{lassas2014inverse}.
These partial data problems pose additional challenges.
Our goal is to investigate what geometric information, particularly jets of the metric and the magnetic potential on the boundary, can still be recovered from such incomplete measurements.



\subsection*{Main results}
In this paper, we consider admissible Lorentzian manifolds defined as follows.
\begin{defn}
    Let $n \geq 3$ and $(M,g)$ be a smooth connected $n$-dimensional Lorentzian manifold with non-empty boundary $\pM$, where $g$ has the signature $(-,+,\dots,+)$. We say $(M,g)$ is {admissible} if
    \begin{enumerate}
        \item there exists a smooth, proper, surjective function $\tau \colon M \to \mathbb{R}$, such that $d\tau$ is everywhere timelike;
        \item $\partial M$ is timelike, i.e., the induced metric $\bar{g} := g|_{T\partial M \times T\partial M}$ is Lorentzian;
        \item $\partial M$ is \emph{strictly null-convex}, i.e., if $\nu$ denotes the outward pointing unit normal vector field on $\partial M$, then
        \[
        II(V,V) = g(\nabla_V \nu, V) > 0 \tag{1.1},
        \]
        for all null vectors $V \in T_p \partial M$, where $II$ is the second fundamental form.
    \end{enumerate}
\end{defn}
We call such $\tau$ a temporal function.
The definition above is adapted from \cite[Definition 1.1]{Hintz2017}, with some minor differences: here we assume the temporal function is smooth and the boundary is strictly null-convex.
With the existence of the temporal function, the well-posedness of \eqref{eq: wave equation} follows from \cite[Theorem 24.1.1]{MR2304165}, see also \cite{alexakis2022}.

Recall $T^*\partial M \setminus 0$ is naturally decomposed into the elliptic, glancing, and hyperbolic sets.
More explicitly, the glancing set contains lightlike covectors in $T^*\pM$ and the hyperbolic set contains timelike covectors in $T^*\pM$.
The glancing set plays an important role in our analysis,
which can be further decomposed into the diffractive part, the gliding part, and higher-order glancing sets, according to the behavior of the null-bicharacteristics near the boundary.
Gliding rays are corresponding to the gliding part.
Roughly speaking, they are generalized bicharacteristics that hit the boundary tangentially, with second order contact, lying locally in $\pM$.
For more details see Section \ref{subsec: gliding}.
Now let $S^*\partial M \coloneqq (T^*\partial M \backslash 0) / \mathbb{R}^+$.
As we can only hope to determine the geometry at boundary points that are visible to measurements,
we introduce the following definition for covectors in the glancing set. For definition of forward/backward gliding rays, see Section \ref{subsec: gliding}.

\begin{defn}\label{def: admissible manifold}
    Let $(M, g)$ be an admissible Lorentzian manifold with dimension $n \geq 3$.
     Let $\mathcal{G}$ be the glancing set defined as in (\ref{def: glancingset}).
     A glancing covector $(x', \xi') \in \mathcal{G}$, viewed as an element of $S^*\partial M$,  is called a {recoverable direction} if
    \begin{itemize}
        \item $x' \in U$ and the corresponding forward gliding ray passes through $V$; or
        \item $x' \in V$  and  the corresponding backward gliding ray passes through $V$.
    \end{itemize}
    The point $x'$ is called a recoverable point if
    \begin{itemize}
        \item for $n = 3$,  both directions in glancing set are recoverable;
        \item for $n > 3$, at least one direction in glancing set is recoverable.
    \end{itemize}
    We denote by $\tilde{U}$ the set of recoverable points in $U$ and by $\tilde{V}$ the set of recoverable points in $V$.
    We refer to them as \emph{recoverable sets}.
\end{defn}
We remark that in our setting, the projection of a gliding ray onto the base manifold is in fact a null-geodesic on $\pM$ with respect to the induced metric $\bar{g}$, see Lemma \ref{lemma: gliding rays are tame}.
Let $\mathcal{L}^\pm_{\pM}(p)$ denote the future (or past) light cone from $p \in \pM$, defined as the union of all forward (or backward) null-geodesics in $(\pM, \bar{g})$ emanating from $p$.
For definition of forward/backward null-geodesics, see Section \ref{subsec: geometry}.
When $n >3$, the set $\tilde{U}$ consists of the points in $U$ whose the future light cone in $(\pM, \bar{g})$ intersects $V$.
Equivalently, these are points in $U$ that lie on the past light cone in $(\pM, \bar{g})$ of some point in $V$.
A similar argument applies to $\tilde{V}$.
Consequently, the recoverable sets are explicitly given by
\begin{align*}
 \tilde{U} = (\textstyle\bigcup_{q \in V} \mathcal{L}^-_{\pM} (q)) \cap U \quad \text{ and } \quad  \tilde{V} = (\bigcup_{p \in U} \mathcal{L}^+_{\pM} (p)) \cap V.
\end{align*}
In addition, we consider the future pointing lens relation $L$, also referred to as the scattering relation in the literature.
We describe it below, see also
Definition \ref{def: lens} and Remark \ref{rem: lens relation} about future pointing lens relation.
Let $(x',\xi') \in T^*\pM$ be timelike and future pointing.
Let $(x', \xi)\in T^*M$ be the unique inward-pointing lightlike covector with orthogonal projection $(x', \xi')$.
There exists a unique forward null-bicharacteristic $\Gamma$ starting from $(x', \xi)$.
If $\Gamma$ intersects the boundary again at some $(y', \eta) \in L^*M$,
then under our assumptions the intersection must be transversal.
Let $(y', \eta')\in T^*M$ be the orthogonal projection of $(y', \eta)$.
We define $L(x', \xi') = (y', \eta')$.
Using that $L$ is an even map, one can define $L$ for past pointing timelike covector, for more details see \cite{Stefanov2018}.



Recall  \(\bar{g}\)  denotes the restriction of \(g\) to \(T\partial M \times T\partial M\), i.e., \(\bar{g} = g|_{T\partial M \times T\partial M}\).
We assume no a priori information is known on the open disjoint boundary sets $U$ or $V$.
The following theorem summarizes what can be reconstructed from the restricted Dirichlet–to–Neumann map
$\Lambda^{U,V}_{g,A,q}$.
\begin{thm}\label{theorem: reconstruction}
    Let $(M, g)$ be an admissible Lorentzian manifold of dimension $n \geq 3$ and $U, V$ be disjoint open subsets of $\pM$.
    Then from the restricted DN map $\Lambda^{U,V}_{g,A,q}$, we can reconstruct
    \begin{itemize}
        \item[(1)] the recoverable sets $\tilde{U}$ and $\tilde{V}$;
        \item[(2)] the conformal class of $\bar{g}$ on $\tilde{U}$ and $\tilde{V}$;
        \item[(3)] for each recoverable direction $(x', \xi')$ on $U$ or $V$, the lens relation $L: \mN \rightarrow \mN$ up to time orientation, where $\mN$ is a small conic timelike neighborhood of $(x', \xi')$;
        \item[(4)] the magnetic potential $A|_{T\pM}$ on $\tilde{U}$ and $\tilde{V}$.
    \end{itemize}
\end{thm}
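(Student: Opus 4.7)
\emph{Proof sketch (plan).} The plan is to probe $\Lambda^{U,V}_{g,A,q}$ with Dirichlet data microlocalized in various regions of $T^*\pM$ and to read off the wave‑front set of the response on $V$. Since $U\cap V=\emptyset$, no pseudodifferential information at the diagonal is available; instead, off‑diagonally the restricted DN map is a Fourier integral operator whose canonical relation is generated by the forward generalized bicharacteristic flow of $\sqA$. Three regimes on $T^*\pM$ will be treated: the glancing set $\mathcal G$ (giving (1), (2), and, via symbol computations, (4)) and the hyperbolic set (giving (3)); the elliptic set contributes only smoothing terms and can be ignored for reconstruction.

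For parts (1) and (2), fix a glancing covector $(x',\xi')\in T^*U\cap\mathcal G$ and pick a Dirichlet source $f$ with wave‑front set a small conic neighborhood of $(x',\xi')$ of gliding type. By Melrose--Sj\"ostrand propagation together with strict null‑convexity, the generalized bicharacteristic of the outgoing solution stays in $\mathcal G$ and coincides with the gliding ray through $(x',\xi')$; by the lemma identifying gliding rays with null‑geodesics of $\bar g$, it projects to a null‑geodesic $\gamma\subset\pM$. Hence $(x',\xi')$ is detectable in $\WF(\Lambda^{U,V}_{g,A,q}f)|_V$ iff $\gamma$ reaches $V$, which is precisely the recoverability condition; sweeping over $T^*U\cap\mathcal G$ and, symmetrically, probing from $V$ with backward gliding data recovers $\tilde U$ and $\tilde V$. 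The detected null covectors at each recoverable point sweep out an open set of null directions when $n>3$ (upon varying $\xi'$) or both null directions when $n=3$ (by definition of recoverability); either way this pins down the conformal class of $\bar g$.

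For part (3), at a hyperbolic $(x',\xi')\in T^*U$ there is a unique inward lightlike lift $(x',\xi)\in T^*M$. A Dirichlet source microlocalized at $(x',\xi')$ produces singularities propagating along the forward null‑bicharacteristic from $(x',\xi)$; admissibility forces a transversal re‑exit at some $(y',\eta)$. If $y'\in V$ then $\WF(\Lambda^{U,V}_{g,A,q}f)$ pinpoints this exit and hence $L(x',\xi')=(y',\eta')$. Letting $(x',\xi')$ vary in a small conic timelike neighborhood $\mN$ of a recoverable covector yields $L$ on all of $\mN$; the time‑orientation ambiguity is intrinsic to the DN map without further a priori input.

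For part (4) we refine the computation from singularities to principal symbols. A Melrose--Taylor‑type parametrix for the gliding propagator has a leading amplitude satisfying a transport equation along $\gamma$ in which $A|_{T\pM}$ enters as the gauge phase $\exp\bigl(-i\int_\gamma\langle A,\dot\gamma\rangle\,ds\bigr)$. Matching principal symbols of $\Lambda^{U,V}_{g,A,q}$ along a family of gliding rays through a recoverable point thus recovers the null light‑ray transform of $A|_{T\pM}$ on $(\pM,\bar g)$, and inverting it modulo exact one‑forms (the allowed gauge) gives $A|_{T\pM}$ on $\tilde U$ and $\tilde V$; the scalar $q$ appears only at subleading order and does not obstruct this step. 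The main technical obstacle is twofold: rigorously justifying the Melrose--Taylor parametrix for the gliding propagator along possibly long segments of $\pM$ while tracking amplitudes through any further tangential boundary interactions, and establishing injectivity, up to exact one‑forms, of the null light‑ray transform on the admissible Lorentzian boundary $(\pM,\bar g)$ from the available rays. The remaining bookkeeping of conformal factors and gauge equivalence is standard.
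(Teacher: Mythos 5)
Your strategy for parts (1) and (2) is broadly aligned with the paper's: singularities of $\Lambda^{U,V}_{g,A,q}f$ on $V$ detect whether a gliding ray (equivalently a null geodesic of $\bar g$, by Lemma~\ref{lemma: gliding rays are tame}) emanating from a glancing covector over $U$ reaches $V$, and this determines $\tilde U,\tilde V$ and the light cone of $\bar g$. The paper does this slightly differently — it probes with hyperbolic data, extracts a ``weak lens relation'' from the discrete singularity pattern (Proposition~\ref{prop: same weak lens relation}), and then identifies the glancing/recoverable directions as limits of hyperbolic covectors using the uniform convergence of broken bicharacteristics to gliding rays (Lemmas~\ref{lemma: uniform convergence of broken bichar}, \ref{lemma: broken bichar converge to gliding on boundary}) — but the net effect is the same.

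Part (3) has a genuine gap. You write that a source microlocalized at a hyperbolic $(x',\xi')\in T^*U$ produces a forward null-bicharacteristic that re-exits transversally at $(y',\eta)$, and ``if $y'\in V$ then $\WF(\Lambda^{U,V}_{g,A,q}f)$ pinpoints this exit and hence $L(x',\xi')=(y',\eta')$.'' But with $U\cap V=\emptyset$ the first re-exit is typically \emph{not} in $V$: the broken bicharacteristic reflects many times before ever entering $V$, and $\WF(\Lambda^{U,V}_{g,A,q}f)$ over $V$ is a discrete set $\{L^{k_1},L^{k_2},\dots\}$ of reflections with the indices $k_j$ and even their ordering unknown to the observer. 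Recovering the actual lens relation $L$ (rather than some $L^k$ for an unknown $k$) requires the ordering argument of Proposition~\ref{prop: upgrade weak lens relation}: one works near the glancing set so that \emph{all} intermediate reflections cluster near the gliding ray and therefore land in $\tilde U$, and one uses a locally defined temporal function on $\pM$ to order them; this determines $L$ up to reversal, i.e.\ up to time orientation. Without this mechanism your claim that one obtains $L$ on a conic neighborhood $\mN$ is not justified.

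Part (4) also departs from the paper and introduces two unnecessary (and unresolved) difficulties. First, you propose a Melrose--Taylor parametrix along the gliding ray; the paper avoids this entirely, instead computing the principal symbol of $\Lambda^{U_0,U_k}$ along \emph{broken} bicharacteristics (Proposition~\ref{prop: principal symbol formula}, which gives the factor $\exp\bigl(i\int_{\gamma^b}A(\dot\gamma^b)\bigr)$ times a sign $(-1)^k$) and then passes to the limit as broken bicharacteristics converge to the gliding ray. Second, you propose to invert a ``null light-ray transform'' on $(\pM,\bar g)$ — but no such inversion is needed: once one has $\exp\bigl(i\int_0^t A(\phi'(s))\,ds\bigr)$ for all $t$ along a boundary null-geodesic $\phi$ through $x'$, one simply differentiates at $t=0$ to obtain $A(\phi'(0))$ pointwise, and varies the recoverable glancing direction to span $T_{x'}\pM$. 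Moreover, you do not address the sign $(-1)^k$ from reflections: the paper handles it by recovering $|k_j|$ from the local lens relation established in part (3), which makes the ambiguity cancel in the quotient $\exp\bigl(i\int_{\gamma^b_j(x'_j\to z'_j)}A\bigr)$. These are substantive steps your sketch omits.
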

    We note that there is a natural obstruction to reconstructing $g, A, q$ from the restricted DN map $\Lambda^{U,V}_{g,A,q}$.
    Specifically, there exist gauge transformations that keep $\Lambda^{U,V}_{g,A,q}$ invariant, see Lemma \ref{lem: gauge2}.
    One can only expect to reconstruct these quantities up to such a gauge transformation.
    Without prior knowledge of $U$ or $V$,
    Theorem \ref{theorem: reconstruction} shows we can reconstruct the induced $\bar{g}$ on the boundary for points in the recoverable sets, up to conformal factors.
    Moreover, we can reconstruct the $1$-form $A$ on the boundary for these points, up to gauge transformations.
    Indeed, we may pick local coordinates $(x', x^n)$ near such a boundary point and suppose it is the semi-geodesic normal coordinates for some metric, see Lemma \ref{lem: semi-geodesic}.
    Then by Lemma \ref{lem: semi-geodesic}, there exists a boundary-preserving diffeomorphism $\Psi$ such that $\Psi^*g$ has $(x', x^n)$ as the semi-geodesic normal coordinates as well.
    Then by \cite[Lemma 2.5]{Stefanov2018}, in these coordinates we can modify the 1-form $\Psi^*A$ using a gauge transformation such that $(\Psi^*A)_n(x',0) = 0$.
    Note that $\Psi^*A|_{T\pM} = A|_{T\pM}$ as $\Psi$ fixes the boundary.
    This implies we reconstruct $\Psi^*A$ on the boundary and therefore $A$ on the boundary up to gauge transformations.

    One may extend the definition of lens relation by using broken null-bicharacteristics reflecting finitely many times at
    $\partial M$; see Definition \ref{def: brokengeo} and \ref{def: brokenbichar}.
    Indeed, let $(x', \xi') \in T^*\pM$ be defined as above.
    For consistency we write $(x', \xi')$ as $(\xzp, \xizp)$.
    We consider the unique broken null-bicharacteristic starting from $(\xzp,\xiz)$, reflecting successively at $\xkp \in \pM$ with $(\xkp, \xik)\in L^*M$, for $k = 1, \ldots, K$.
    Let $(\xkp,\xikp) \in T^*\pM$ be the orthogonal projection of $(\xkp, \xik)$.
    Then we define the $k$-th lens relation $L^k$ as $ L^k(\xzp, \xizp) = (\xkp, \xikp)$.
    By Proposition \ref{prop: principal symbol formula}, when $\xzp \in U$ and $\xkp \in V$, such $L^k$ can be reconstructed from the restricted DN map, although the exact value of $k$ may not be recovered), see Section \ref{sec: recovery of weak lens relation}.
\vspace{0.5em}

Based on this constructive result, we prove the following determination result.
\begin{thm}\label{theorem: determination}
    Let $(M, g_1)$ and $(M, g_2)$ be admissible Lorentzian manifolds  of dimension $n \geq 3$.
    Let $U, V$ be disjoint open subsets of $\pM$.
    Suppose
    \[
    \Lambda^{U,V}_{g_1,A_1,q_1} f = \Lambda^{U,V}_{g_2,A_2,q_2} f
    \]
    for any distribution $f$ with compact support in $U$.
    Then 
    $\bar{g}_1 = e^{\varphi_0} \bar{g}_2$ in the same recoverable sets $\tilde{U}$ and $\tilde{V}$,
    where $\varphi_0 \in C^{\infty}(\tilde{U} \cup \tilde{V})$ is locally constant.
    In particular,
    if $W_1 \subseteq \tilde{U}$ and $W_2 \subseteq \tilde{V}$ are connected components joined by some forward gliding ray or broken bicharacteristic, then there exists a constant $c$ such that
    \[
    \varphi_0|_{W_1} \equiv \frac{c}{n-2}\quad \text{ and }\quad \varphi_0|_{W_2} \equiv \frac{c}{n}.
    \]
    {Furthermore, there exists $\varphi \in C^\infty(M)$ with $\varphi|_{\pM} = \varphi_0$ and a local diffeomorphism $\psi$ near $\pM$ that fixes it pointwise, such that the jets of $g_1$ and $e^\varphi \psi^*g_2$ coincide in $\tilde{U}$ and $\tilde{V}$.
    }
\end{thm}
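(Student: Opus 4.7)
The plan is to first apply Theorem~\ref{theorem: reconstruction} to extract the soft geometric invariants from the equality of DN maps, and then exploit the conformal transformation law of the wave operator, combined with the principal symbol formula of Proposition~\ref{prop: principal symbol formula}, to pin down the conformal factor and match boundary jets. From Theorem~\ref{theorem: reconstruction} applied to $\Lambda^{U,V}_{g_1,A_1,q_1} = \Lambda^{U,V}_{g_2,A_2,q_2}$ I would immediately obtain: identical recoverable sets $\tilde U, \tilde V$; a smooth function $\varphi_0$ on $\tilde U \cup \tilde V$ with $\bar g_1 = e^{\varphi_0} \bar g_2$; coincident lens relations on conic timelike neighborhoods of every recoverable direction, modulo time orientation; and $A_1|_{T\pM} = A_2|_{T\pM}$ on the recoverable points, up to gauge.

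For the local constancy and the specific values of $\varphi_0$, I would analyze the conformal gauge $g \mapsto e^{2\lambda} g$ combined with the rescaling $u \mapsto e^{-(n-2)\lambda/2} u$, with $\lambda := \varphi_0/2$ on $\pM$ and extended smoothly into $M$. Since $\operatorname{supp} f \subseteq U$ and $U \cap V = \emptyset$, we have $u|_V = 0$, and rescaling the outward normal as $\hat\nu = e^{-\lambda}\nu$ yields
\[
\partial_{\hat\nu}\hat u\big|_V = e^{-n\lambda/2}\big|_V\,\partial_\nu u\big|_V,\qquad \hat f = e^{-(n-2)\lambda/2}\big|_U\, f.
\]
Tracking these rescalings through the principal symbol formula along a forward gliding ray or broken bicharacteristic connecting $x_0' \in W_1 \subseteq \tilde U$ to $y_0' \in W_2 \subseteq \tilde V$ introduces a multiplicative factor $e^{(n-2)\lambda(x_0')/2 - n\lambda(y_0')/2}$ on the leading singularity at $y_0'$. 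Equality of the two restricted DN maps therefore forces this factor to equal $1$, giving
\[
(n-2)\,\varphi_0(x_0') \;=\; n\,\varphi_0(y_0')
\]
for every such pair. Using the openness of the entry/exit sets under perturbation of gliding directions together with the connectedness of $W_1$ and $W_2$, this pointwise relation forces $\varphi_0$ to be constant on each component; setting $c := (n-2)\varphi_0|_{W_1} = n\,\varphi_0|_{W_2}$ yields the stated values.

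To produce $\varphi$ and $\psi$, I would smoothly extend $\varphi_0$ to $\varphi \in C^\infty(M)$ in a collar neighborhood of $\pM$ and set $\tilde g_2 := e^\varphi g_2$, so that $\overline{\tilde g_2} = \bar g_1$ on $\tilde U \cup \tilde V$. By Lemma~\ref{lem: semi-geodesic} there is a boundary-fixing local diffeomorphism $\psi$ near $\pM$ such that $\psi^*\tilde g_2$ is in the same semi-geodesic normal form as $g_1$ at each recoverable point, giving agreement of $g_{nn}=1$, $g_{n\alpha}=0$, and the tangential boundary values. To lift this to full jet agreement, I would iterate the symbol analysis of $\Lambda^{U,V}$: each subsequent term in the full symbol expansion of the FIO part along a recoverable bicharacteristic encodes one further normal derivative of $g$ and $A$ at recoverable boundary points, via a transport-equation argument adapted from the full-data analysis of \cite{Stefanov2018}. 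Matching symbols order by order then forces the jets of $g_1$ and $e^\varphi \psi^* g_2$ to coincide on $\tilde U \cup \tilde V$. The main expected obstacle is precisely this last step: with $U \cap V = \emptyset$ the pseudodifferential part of $\Lambda$ on $U$, the usual source of boundary jets, is unavailable, so jet information must be teased out of the full symbol expansion of the FIO part of $\Lambda^{U,V}$, with additional bookkeeping across broken-bicharacteristic reflections of unknown order $k$ and over possibly several distinct lens segments contributing to a single recoverable pair.
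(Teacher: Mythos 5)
Your first step (applying Theorem~\ref{theorem: reconstruction}) matches the paper, and your derivation of the pointwise constraint $(n-2)\varphi_0(x_0') = n\varphi_0(y_0')$ for pairs connected by a forward broken bicharacteristic is essentially equivalent to the paper's use of the conformal-factor relation~\eqref{eq: conformal factor relation} (or of the gauge Lemma~\ref{lem: gauge2}). However, there are two genuine gaps, and the second step of your plan diverges from the paper in a way you yourself acknowledge is unresolved.

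First, the local constancy of $\varphi_0$. Knowing $(n-2)\varphi_0(x') = n\varphi_0(y')$ for connected pairs only tells you that $\varphi_0$ is constant on the set of $V$-points reachable from a fixed $x' \in U$, and vice versa; it does \emph{not} by itself propagate to nearby points $z'$ in $U$, because you would need to know that $z'$ and $x'$ share a common connection point in $V$. Your appeal to ``openness of entry/exit sets'' and ``connectedness of $W_1,W_2$'' skips exactly the hard content. The paper establishes this in Propositions~\ref{prop: dimension 3 conformal factor locally constant} and~\ref{prop: dimension 4 and higher conformal factor locally constant}, with dimension-dependent geometric arguments: for $n=3$ it uses both recoverable glancing directions at a point and approximates the boundary null cone by the tangent-space light cone to show two nearby null-geodesic crosses intersect; for $n\geq 4$ it uses an open set of recoverable glancing directions, geodesic convexity, and a fibration argument to show that the union of boundary null-geodesics through a point sweeps out a set with nonempty interior. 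Neither is a routine openness/connectedness deduction; without them the local constancy claim is unsupported.

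Second, and more seriously, the jet-agreement step. You propose to match the full symbol expansion of the FIO part of $\Lambda^{U,V}$ order by order, ``adapted from the full-data analysis of \cite{Stefanov2018},'' while conceding that this is ``the main expected obstacle.'' The paper does something qualitatively different: having reconstructed the lens relation $L$ (up to time orientation) on a conic neighborhood of each recoverable glancing direction (Proposition~\ref{prop: upgrade weak lens relation}), it invokes \cite[Theorem~2.1]{stefanov2024boundary} as a black box, which says precisely that agreement of the lens relation near a lightlike strictly convex direction, together with agreement of the boundary conformal class, determines the boundary jet up to a conformal rescaling and a boundary-fixing diffeomorphism. No lower-order symbol expansion of the restricted DN map is needed. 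Your proposed route would require developing and proving a new inductive symbol argument that you explicitly have not carried out, so as written this step is a gap rather than an alternative proof.
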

Compared to Theorem \ref{theorem: reconstruction}, this theorem is a uniqueness result that further describes the conformal factor between $\bar{g}_1$ and $\bar{g}_2$ when the corresponding restricted DN maps coincide.
Moreover, it proves that the jets of the metric on the boundary are determined up to gauge transformations at points in the recoverable sets.

Next, assume the induced metric and time orientation are known on $U$ (respectively on $V$).
Then the structure on a subset of $V$ (respectively of $U$) can be reconstructed.
To state the result, we introduce the following definition.
\begin{defn}
    We say $x'$ is $V$-recoverable, if there exists a forward broken null-geodesic from $x'$ to some $y' \in V$.
    Similarly, $y'$ is called $U$-recoverable, if there exists a backward broken null-geodesic from $y'$ to some $x' \in U$.
    We denote by $\tilde{U}_0$ the set of $V$-recoverable points in $U$ and by $\tilde{V}_0$ the set of $U$-recoverable points in $V$.
\end{defn}
More explicitly, let $\mathcal{L}^{\pm, b}(p)$ denote the union of all forward (or backward) broken null-geodesics starting from $p$ in $(M,g)$.
Then $\tilde{U}_0$ and $\tilde{V}_0$ can be written as
\[
\tilde{U}_0 = (\textstyle\bigcup_{q \in V} \mathcal{L}^{-, b} (q)) \cap U \quad \text{ and } \quad  \tilde{V}_0 = (\bigcup_{p \in U} \mathcal{L}^{+,b} (p)) \cap V.
\]
We emphasize that both $\tilde{U}_0$ and $\tilde{V}_0$ are open subsets, since under our assumptions broken null-geodesics are stable under small perturbations.
In particular, one has $\tilde{U} \subseteq \tilde{U}_0$ and $\tilde{V} \subseteq \tilde{V}_0$ by Lemma \ref{lemma: broken bichar converge to gliding on boundary}.
In this setting, we state the following theorem.

\begin{thm}\label{theorem: reconstruction with a priori info}
    Let $(M,g)$ be an admissible Lorentzian manifold of dimension $n \geq 3$ and $U, V$ be open disjoint subsets of $\pM$.
    Given the induced metric $\bar{g}$ and the time orientation on $\tilde{U}_0$,
    from the restricted DN map $\Lambda^{U,V}_{g,A,q}$, one can reconstruct $\bar{g}$ and the time orientation on $\tilde{V}_0$.
    The same result holds with $\tilde{U}_0$ and $\tilde{V}_0$ interchanged.
\end{thm}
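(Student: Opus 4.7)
The plan is to combine Theorem \ref{theorem: reconstruction} with the broken lens relation reconstruction from Proposition \ref{prop: principal symbol formula} and the a priori information on $\tilde{U}_0$. Theorem \ref{theorem: reconstruction} already gives the sets $\tilde{U}, \tilde{V}$, the conformal classes $[\bar{g}]$ on each, and the broken lens relations $L^k$ up to time orientation. Since $\tilde{U}\subseteq\tilde{U}_0$, the a priori data immediately trivialize the conformal factor on $\tilde{U}$ and select the future-pointing direction in the hyperbolic set over $\tilde{U}$.

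To extend to $\tilde{V}$, apply Theorem \ref{theorem: determination} to any second admissible metric $g_2$ producing the same restricted DN map and the same $\bar{g}$ on $\tilde{U}_0$. The conformal factor $\varphi_0$ then satisfies $\varphi_0|_{W_1}\equiv c/(n-2)$ and $\varphi_0|_{W_2}\equiv c/n$ on pairs of connected components $W_1\subseteq\tilde{U}$, $W_2\subseteq\tilde{V}$ joined by a forward gliding ray or broken bicharacteristic. The a priori data force $\varphi_0\equiv 0$ on $W_1$, hence $c=0$ and $\varphi_0\equiv 0$ on $W_2$, so $\bar{g}$ is determined on $\tilde{V}$; the time orientation transports continuously along these bicharacteristics.

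To reach the remaining points of $\tilde{V}_0\setminus\tilde{V}$, fix $y'$ in this set and choose a forward broken null-geodesic $\gamma$ from some $x'\in U$ (hence $x'\in\tilde{U}_0$) to $y'$ with at least one transverse boundary reflection. Proposition \ref{prop: principal symbol formula} reconstructs the corresponding multiple-reflection lens relation $L^K(x',\xi')=(y',\eta')$ for $\xi'$ in an open conic timelike neighborhood at $x'$. As $\xi'$ approaches the $\bar{g}(x')$-null cone, $\eta'$ approaches the $\bar{g}(y')$-null cone by stability of broken bicharacteristics (Lemma \ref{lemma: broken bichar converge to gliding on boundary}), which reconstructs $[\bar{g}(y')]$. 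To fix the conformal factor at $y'$, use that the known $\bar{g}(x')$ determines the lightlike lift $\xi\in T^*_{x'}M$ and the affine parameterization of $\gamma$; the terminal covector $\eta\in T^*_{y'}M$ is then a specific lightlike covector whose tangential projection equals the reconstructed $\eta'$. In semi-geodesic normal coordinates (Lemma \ref{lem: semi-geodesic}) the lightlike condition at $y'$ reads $\bar{g}^{ij}(y')\eta_i\eta_j+\eta_n^2=0$; comparing this identity with the reconstructed $\eta'$ for a family of incoming $\xi'$ determines $\bar{g}^{ij}(y')$, and the time orientation on $V$ then propagates by continuity along $\gamma$.

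The main obstacle is the last step: rigorously reading off the multiple-reflection lens relation $L^K$ from the principal symbol of $\Lambda^{U,V}_{g,A,q}$ for broken bicharacteristics, and extracting the normal component $\eta_n$ of the endpoint covector only indirectly, via the consistency between the gauge-fixed lightlike condition at $y'$ and the $\bar{g}(x')$-controlled dynamics of $\gamma$. The smoothness and local invertibility of $L^K$ on the open timelike cone are crucial for making this consistency sharp enough to pin down $\bar{g}(y')$ and to exclude spurious solutions arising from the sign ambiguity of $\eta_n$.
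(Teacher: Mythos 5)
Your general strategy (lean on Proposition \ref{prop: principal symbol formula} and the a priori metric on $\tilde{U}_0$) is in the right spirit, but the key step---determining $\bar{g}$ at a point $y'\in\tilde{V}_0$---diverges from the paper's argument and has two genuine gaps.

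\emph{First gap.} Your claim that ``as $\xi'$ approaches the $\bar{g}(x')$-null cone, $\eta'=L^K(x',\xi')$ approaches the $\bar{g}(y')$-null cone'' is false. As $(x',\xi')$ approaches the glancing set, the broken bicharacteristic converges to the gliding ray emanating from $(x',\xi')$ (Lemma \ref{lemma: uniform convergence of broken bichar}), and the successive reflection points accumulate rapidly near $x'$; hence for a fixed number $K$ of reflections, $L^K(x',\xi')$ converges to a glancing covector on the gliding ray \emph{near $x'$}, not to a glancing covector at $y'$. So this limit does not give you $[\bar{g}(y')]$.

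\emph{Second gap.} The proposal to determine the conformal factor at $y'$ by ``comparing the lightlike condition $\bar{g}^{ij}(y')\eta_i\eta_j+\eta_n^2=0$ with the reconstructed $\eta'$'' is circular: $\eta_n$ is not read off from the DN map, and the lightlike relation that would determine $\eta_n$ from $\eta'$ already presupposes $\bar{g}(y')$. You acknowledge this difficulty but do not resolve it.

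The paper avoids both problems by using the \emph{magnitude} of the principal symbol in Proposition \ref{prop: principal symbol formula} directly, rather than the canonical relation or any limiting argument. Since $\bar{g}$ is known on $\tilde{U}_0$, both $|\bar{g}(x')|_{x'}$ and $|\xi_n|=(-\bar{g}(\xi',\xi'))^{1/2}$ are computable, so the identity from Proposition \ref{prop: principal symbol formula} yields the quantity $C(y',\eta')=|\bar{g}(y')|^{-1}_{y'}\bar{g}(\eta',\eta')$ for all $\eta'$ in an open conic set. Evaluating $C$ on a basis $\zeta^1,\dots,\zeta^{n-1}$ and on sums $\zeta^\alpha+\zeta^\beta$ gives, by polarization, the matrix $C(y')=|\bar{g}(y')|^{-1}_{y'}\bar{g}^{-1}(y')$, and then taking determinants recovers $\bar{g}(y')$ via $\bar{g}(y')=\bigl(\det(C(y'))^{-1/n}C(y')\bigr)^{-1}$. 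This handles \emph{all} of $\tilde{V}_0$ uniformly, with no need to split off $\tilde{V}_0\setminus\tilde{V}$, and no need to invoke Theorem \ref{theorem: determination} (which is a uniqueness statement, not a reconstruction, and in any case only covers $\tilde{V}$). The time orientation is then transported along the broken bicharacteristic, which is the same observation you make.
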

    This theorem implies that with additional information on $U$,  we can reconstruct $g$ on the boundary at points in the $U$-recoverable sets of $V$,  up to gauge transformations.
    Indeed, with the induced metric $\bar{g}$ reconstructed in the $U$-recoverable sets, we may choose local coordinates $(x', x^n)$ near a boundary point.
    As before, by Lemma \ref{lem: semi-geodesic}, there exists a boundary-preserving diffeomorphism $\Psi$ such that $\Psi^*g$ has $(x', x^n)$ as the semi-geodesic normal coordinates.
    Since $\Psi^*g|_{T\pM \times T\pM} = \bar{g}$,
    we actually reconstruct $\Psi^*g$ on the boundary in  these coordinates and thus $g$ on the boundary up to gauge transformations.

\subsection{Idea of the proof}
The full data DN map $\Lambda_{g,A,q}$ on an admissible Lorentzian manifold is known to be associated with two canonical relations: the diagonal and the graph of the lens relation.
The first corresponds to its pseudodifferential operator part, and the second to its Fourier Integral Operator part.
In \cite{Stefanov2018}, the jet bundles of $g$, $A$ and $q$ on the boundary are stably recovered {up to gauge transformations} using the full symbol of the pseudodifferential part of $\Lambda_{g,A,q}$.
However, when the source region $U$ and the observation region $V$ are disjoint, we no longer have access to the pseudodifferential part of the DN map. Instead, we fully rely on the Fourier Integral Operator part of the DN map to recover information. We perform symbolic construction of solutions to \eqref{eq: wave equation} with conormal distribution $f$ to compute the principal symbol of $\Lambda^{U,V}_{g,A,q}$ on the Fourier Integral Operator part. We show that the absolute value of the principal symbol connects the determinant of the metric on $U$ and $V$ via broken bicharacteristics, while the argument of the principal symbol reveals information about the light ray transform of $A$ along broken null-geodesics.

In order to make use of the information provided by the principal symbol of the DN map, we utilize gliding rays, which are essentially null-bicharacteristics for the metric restricted to the boundary, see {Lemma \ref{lemma: gliding rays are tame}.}
Gliding rays are important in our proof for two main reasons:
\begin{enumerate}
    \item the union of gliding rays acts as light observation sets, which provides information about the structure of the light cone on the boundary;
    \item the gliding rays are entirely in $T^*\partial M$, in contrast to the interior, where we have less control on the boundary.
\end{enumerate}
The use of gliding rays also allows us to have little restriction on $U$ and $V$, for example,  they may arbitrarily small and away from each other.
The amount of information that can be recovered then only depends on the existence of gliding rays connecting them.

Therefore, we first use the propagation of singularities to identify gliding rays on the boundary.
From these gliding rays, we reconstruct the light cone and hence the conformal class of the metric at points in the recoverable sets on the boundary.
We prove that around the glancing set, we can also construct the lens relation up to time orientation.
This allows us to use the main result in \cite{stefanov2024boundary} to determine the jet bundle of the metric up to gauge transformations.

We then use the principal symbol of $\Lambda^{U,V}_{g,A,q}$ to determine the conformal factor and to construct the 1-form $A$ restricted to boundary tangential directions.
\begin{enumerate}
    \item The magnitude of the principal symbol gives information of the determinant of the metric. We use a geometric argument to show that the determinant relation provided by the magnitude of the principal symbol is enough to guarantee that the conformal factor is locally constant. This is a deterministic result.
    \item The argument of the principal symbol contains the light ray transform of the 1-form $A$. We use the fact that broken bicharacteristics converge uniformly to gliding rays on admissible Lorentzian manifolds. Since we have the light ray transform of $A$ along broken null-geodesics, the limit gives the integral of $A$ along the projection of gliding rays. We show that this is enough to construct $A$ on boundary tangential directions.
\end{enumerate}

Finally, when we already have a priori information, we show that more can be constructed. Instead of the recoverable sets, which by definition is the region connected by gliding rays, we show that we can construct the metric in the region connected by broken bicharacteristics. Note that this region is strictly larger than recoverable sets, {as gliding ray can be uniformly approximated by broken bicharacteristics on bounded time intervals}, see Section \ref{subsec: gliding}.

\subsection{Outline of the paper}
In Section \ref{sec: preliminaries}, we introduce tools and results required. Specifically, in Section \ref{subsec: operators on half-densities}, we introduce density bundles; in Section \ref{subsec: Lagrangian distributions}, we discuss microlocal analysis; in Section \ref{subsec: geometry}, we discuss the basics of Lorentzian manifold; in Section \ref{subsec: gauge equivalence}, we discuss gauge equivalence; and in Section \ref{subsec: gliding}, we discuss properties of broken bicharacteristics and gliding rays.

In Section \ref{sec: DN map}, we compute the principal symbol of $\Lambda^{U,V}_{g,A,q}$ for disjoint $U$ and $V$. In Section \ref{subsec: notation for dn map}, we introduce some notations; in Section \ref{subsec: optics}, we recall the optics construction of the solution; and in Section \ref{subsec: microlocal construction}, we symbolically construct the solution and compute the principal symbol.

In Section \ref{sec: recovery of weak lens relation}, we construct the conformal class of the boundary metric via weak lens relation; in Section \ref{sec: recover lens relation}, we upgrade the weak lens relation to lens relation up to time orientation near the glancing set; in Section \ref{sec: determine conformal factor}, we show the conformal factor is locally constant; in Section \ref{sec: determine 1-form}, we construct the 1-form $A$ on boundary tangential directions; finally in Section \ref{sec: proof of main theorems}, we combine the previous results and prove Theorem \ref{theorem: reconstruction}, Theorem \ref{theorem: determination} and Theorem \ref{theorem: reconstruction with a priori info}.

\subsection*{Acknowledgment} The author YY and YZ would like to thank Gunther Uhlmann for numerous helpful discussion throughout this project and to thank Plamen Stefanov for helpful suggestions.

\section{Preliminaries}\label{sec: preliminaries}

\subsection{Operators on half-densities}\label{subsec: operators on half-densities}


For $s \in \mathbb{R}$, the \textit{$s$-density bundle} is the real line bundle $\Omega_M^{\alpha} \to M$, whose transition map is defined as follows: for any coordinate charts $F_i: U_i \to \mathbb{R}^n$, the transition map is given by $\tau_{ij}(p, v) = (p, |\det(F_i \circ F_j^{-1})'|^{-s}v)$. Just like differential forms on a manifold, one can view $s$-densities as
\[
\Omega_{M}^s = \{\omega: \Lambda^nM \to \mathbb{R}: \omega(\mu v) = |\mu|^sv, \  v\in \Lambda^nM, \ \mu \in \mathbb{R}\},
\]
where $M$ has dimension $n$ and $\Lambda^nM$ is the $n$-th exterior power of tangent bundle.
In what follows, we will mainly use half-densities $\Omega_M^{1/2}$ and we write
\[
\omega(v_1, \dots, v_n) := \omega(v_1 \wedge \dots \wedge v_n).
\]
Hence if $w_1, \dots, w_n$ is another set of vectors and $A$ is the matrix satisfying $v_1 \wedge \dots \wedge v_n = (\det A) w_1 \wedge \dots \wedge w_n$, then
\[
\omega(v_1, \dots, v_n) = |\det A|^{1/2}\omega(w_1,\dots,w_n).
\]
We will also repeatedly use the notation $|dx|^{1/2}$ for the standard coordinate half-density, where $x$ is a local coordinate system, and $dx = dx^1 \wedge \dots \wedge dx^n$. For more details on densities, see \cite{hintz2025}.

Let $\mathcal{D}'(M;\Omega_M^{1/2})$ denote the set of half-density valued distributions on $M$, defined as the dual space of smooth half-densities $C^{\infty}(M; \Omega_M^{1/2})$. Let $|g|^{1/4} \in C^{\infty}(M;\Omega_M^{1/2})$ such that in local coordinates $x$, it has the form
\[
|g|^{1/4} = |g|_x^{1/4}|dx|^{1/2}
\]with $|g|_x$ being the absolute value determinant of $g$ in coordinate $x$. One can convert an operator on functions to one acting on half-densities by conjugating a half-density. The natural choice is given by conjugating $|g|^{1/4}$:
\[
P_{g,A,q} = |g|^{1/4}\Box_{g,A,q}|g|^{-1/4},
\]
so that $P_{g,A,q}(u|g|^{1/4}) = (\Box_{g,A,q}u)|g|^{1/4}$. In local coordinates, we have
\begin{align*}
|dx|^{-1/2}P_{g, A, q}(u(x)|dx|^{1/2}) &= |g|_x^{-1/4} (\partial_j - i A_j)
\left[ |g|_x^{1/2} g^{jk} (\partial_k - i A_k) (|g|_x^{-1/4}u) \right] + qu \\
&= \left[ g^{jk} \partial_j \partial_k + \partial_j g^{jk} \partial_k - 2i A_j g^{jk} \partial_k + {\text{lower order terms}} \right] u.
\end{align*}
Thus the principal symbol, sub-principal symbol and the corresponding Hamiltonian vector field for the operator $P_{g, A, q}$ are
\begin{align}
    p &= -g^{jk}\xi_j\xi_k,\nonumber\\
    c &= i\partial_jg^{jk}\xi_k + 2A_jg^{jk}\xi_k - \frac{1}{2i}\sum_l\frac{\partial^2}{\partial x^l \partial \xi_l} (-g^{jk}\xi_j\xi_k) = -A(H_p),\label{eq: symbol of P}\\
    H_p&= -2g^{jk}\xi_j\partial_{x^k} + \partial_{x^l}g^{jk}\xi_j\xi_k\partial_{\xi_l}\nonumber
\end{align}
where we lift $A$ as a 1-form on $T^*M$.

\subsection{Lagrangian distributions and principal symbols}\label{subsec: Lagrangian distributions}

In this subsection, we denote by $M$ a smooth manifold without boundary.
We follow \cite[Definition 3.2.2]{Hoermander1971}, with a small modification in notation.
\begin{defn}
    Let $\Lambda \subset T^*M\backslash 0$ be a conic Lagrangian submanifold. By $I^m(\Lambda; \Omega_M^{1/2})$ we shall denote the set of all $w \in \mathcal{D}'(M; \Omega_M^{1/2})$ such that $w = \sum_{j \in J} w_j$ with the supports of $w_j$ locally finite and
    \[
    \langle w_j, v \rangle = (2\pi)^{-(n+2N_j)/4}\int\int e^{i(\phi_j(x, \theta)-\pi N_j/4)}a_j(x, \theta)v(x)dxd\theta, \quad v\in C^{\infty}(M).
    \]
    In the above equation,
    \begin{itemize}
        \item $X_j$ is a local coordinate patch and $dx$ is the Lebesgue measure with respect to it;
        \item $\phi_j$ is a non-degenerate phase function defined in a conic neighborhood $U_j$ of $X_j \times (\mathbb{R}^{N_j}\backslash 0)$,  such that the critical set $\{(x, \theta): \partial_\theta \phi_j(x, \theta) = 0\}$  parameterizes $\Lambda$ locally via the diffeomorphism
        \[
        (x, \theta) \mapsto (x, \partial_x \phi_j(x, \theta));
        \]
        \item $a_j \in S^{\mu_j}(\mathbb{R}^n \times \mathbb{R}^{N_j})$ is a symbol of order $\mu_j = m+(n-2N_j)/4$,
        with $supp(a_j) \subset \{(x, t\theta): t\geq 1, (x, \theta) \in V \}$, where $V$ is a compact subset of the image $U_j$ in $\mathbb{R}^n\times \mathbb{R}^{N_j}$; see \cite[Definition 1.1.1]{Hoermander1971} for symbol classes.
    \end{itemize}
\end{defn}
In particular, if $\Lambda = N^*K$ is the conormal bundle of a smooth submanifold $K \subset M$, then $I^m(N^*K; \Omega_M^{1/2})$ is called the set of half-density valued conormal distributions.

It is well-known that given a Lagrangian distribution, its principal symbol can be invariantly defined as a half-density on the Lagrangian submanifold.
To define the principal symbol, consider $w \in I^m(\Lambda; \Omega_M^{1/2})$.
Suppose in a local coordinate patch, it has the form
\[
\int e^{i\phi(x, \theta)}a(x, \theta)d\theta |dx|^{1/2},
\]
where for simplicity we ignore the factor $(2\pi)^{-(n+2N)/4}$ throughout the paper, as it does not affect the computation at all.
Now let $C = \{(x, \theta): \partial_\theta \phi = 0\}$ be the critical set and $F(x, \theta) = (x, \phi_x)$ be the diffeomorphism from $C$ to an open subset of $\Lambda$.
Let $L$ be the Maslov bundle on $\Lambda$.
Then the principal symbol $\sigma(w) \in S^{m+n/4}(\Lambda; \Omega_{\Lambda}^{1/2} \otimes L)$ is locally computed via method of stationary phase, see for example \cite{Duistermaat2010, Hoermander1971}, as
\[
(F^{-1})^*\left( e^{i\pi N/4}a(x, \theta) \left|\frac{D(\lambda, \partial_{\theta}\phi)}{D(x, \theta)}\right|^{-1/2}|d\lambda|^{1/2} \right),
\]
where $\lambda$ is a local coordinates on $C$ and $N = sgn\left(\frac{D(\lambda, \partial_\theta\phi)}{D(x, \theta)}\right)$ is the Maslov index.
The principal symbol is unique modulo lower order symbols, i.e.,
\[
\sigma(w) \in S^{m+n/4}(\Lambda; \Omega^{1/2}_{\Lambda} \otimes L)/S^{m+n/4-1}(\Lambda; \Omega^{1/2}_{\Lambda} \otimes L)
\]
 is unique as an equivalence class.

We shall mostly work with classical symbols, i.e.,
$a(x, \theta)$ can be written asymptotically as
\[
    \chi(\theta) a(x, \theta) + (1-\chi(\theta))\sum_{j=m+(n-2N)/4}^{-\infty}a_j(x, \theta),
\]
where $\chi$ is a compactly supported smooth cutoff equal to $1$ around $0$, and $a_j$ is homogeneous of degree $j$ in $\theta$. In this case, the principal symbol is uniquely defined in $S^{m+n/4}(\Lambda; \Omega^{1/2}_{\Lambda} \otimes L)$, and we refer to such Lagrangian distributions as classical Lagrangian distributions, denoted by $I_{cl}^m(\Lambda;\Omega^{1/2}_{\Lambda})$.

We will repeatedly encounter Lagrangian distributions of the form $w = v|g|^{1/4}$ where $v \in \mathcal{D}'(M)$ and $|g|^{1/4} \in C^{\infty}(M;\Omega_M^{1/2})$ is the half-density defined previously. Then in local coordinates $v$ has the form
\[
v(x) = \int e^{i\phi(x, \theta)}b(x,\theta)d\theta = \int e^{i\phi(x, \theta)}|g|_x^{-1/4}a(x,\theta)d\theta.
\]
We shall denote
\[
\sigma_x(v) = (F^{-1})^*\left( e^{i\pi N/4}b(x, \theta) \left|\frac{D(\lambda, {\partial_\theta \phi})}{D(x, \theta)}\right|^{-1/2}|d\lambda|^{1/2} \right)
\]
with the subscript $x$ reminding that it depends on coordinate choice. It is easy to see that $\sigma(w) = \sigma_x(v)|g|^{1/4}_x$.

Finally, let $P$ be a differential operator acting on half-density valued distributions and $p(x,\xi)$ be its principal symbol.
Then we define the characteristic set $\Char(P) = \{(x, \xi)\in T^*M: p(x,\xi) = 0\}$.
Let $w \in I^m(\Lambda; \Omega_M^{1/2})$ be such that the Lagrangian submanifold $\Lambda$ is contained in the characteristic set of $P$, that is, $\Lambda \subset \Char(P)$. Suppose $Pw \in C^{\infty}(M; \Omega_M^{1/2})$.
Recall $H_p$ is the Hamiltonian vector field of $p$, and $c$ the sub-principal symbol of $P$.
Then the principal symbol $\sigma(w)$ satisfies
\begin{equation}\label{eq: lie derivative of principal symbol}
    (\mathcal{L}_{H_p}+ic)\sigma(w) = 0,
\end{equation}
where $\mathcal{L}_{H_p}$ is the Lie derivative of $H_p$ on half-densities over $\Lambda$, see \cite[Section 5.3]{DuiHor}.

For more details on the definition of Lagrangian distributions, principal symbols and FIO calculus, we refer to \cite{hintz2025, Dui, Hoermander1971, MR2304165, Hor4, Duistermaat1972}.

\subsection{Lorentzian geometry}\label{subsec: geometry}
Recall $(M,g)$ is an admissible Lorentzian manifold, see Definition \ref{def: admissible manifold}.
For $p, q \in M$, we denote by $p < q$ if $p \neq q$ and $p$ is joined to $q$ by a future-pointing causal curve.
We use $p \leq q$ if $p = q$ or $p < q$.
The causal future of $p$ is $J^+(p) =  \{q \in M: p \leq q\}$ and
that for any set $A \subseteq M$ is  $J^+(A) = \cap_{p \in A} J^+(p)$.
For $\eta \in T_p^*M$, the corresponding vector of $\eta$  is denoted by $ \eta^\sharp \in T_p M$.
The corresponding covector of  $v \in T_p M$ is denoted by $ v^\flat \in T^*_p M$.
We denote by
\[
L_p M = \{v \in T_p M \setminus 0: \  g(v, v) = 0\}
\]
the set of lightlike vectors at $p \in M$ and similarly by $L^*_p M$ the set of lightlike covectors.
The sets of future-pointing (or past-pointing) lightlike vectors are denoted by $L^+_p M$ (or $L^-_p M$), and those of future-pointing (or past-pointing) lightlike covectors are denoted by $L^{*,+}_p M$ (or $L^{*,-}_p M$).
Recall the principal symbol $p$ and the Hamiltonian vector field $H_p$ in (\ref{eq: symbol of P}) for $P_{g, A, q}$. 
Note that $P_{g,A,q}$ and $\Box_{g,A,q}$ share the same principal symbol and thus the same Hamiltonian vector field.
In the Lorentzian manifold $(M,g)$, the characteristic set $\Char(P_{g, A, q})$
is actually the set of lightlike covectors with respect to $g$.

The set $T^*\partial M \setminus 0$ is naturally decomposed into three different regions by the metric in the following way.
The elliptic region $\mathcal{E}$, the glancing region $\mathcal{G}$ and the hyperbolic region $\mathcal{H}$ are defined by
\begin{equation}\label{def: glancingset}
\begin{aligned}
    \mathcal{E} &= \{(x, \xi') \in T^*\partial M \setminus 0: |\pi^{-1}(x, \xi') \cap L^*M| = 0\},\\
    \mathcal{G} &= \{(x, \xi') \in T^*\partial M \setminus 0: |\pi^{-1}(x, \xi')\cap L^*M| = 1\},\\
    \mathcal{H} &= \{(x, \xi') \in T^*\partial M \setminus 0: |\pi^{-1}(x, \xi')\cap L^*M| = 2\}.
\end{aligned}
\end{equation}
If $(x, \xi') \in \mathcal{H}$, we thus have two lightlike preimages $(x, \xi_{\pm})$, where $+$/$-$ corresponds to $\xi_{\pm}^{\sharp}$ being inward/outward-pointing.
Let $\Gamma_{x, \xi_{\pm}}$ be the integral curves generated by $H_p$ passing through $(x, \xi_{\pm})$.
We call them the (null-)bicharacteristics for $P_{g,A,q}$ or $\Box_{g,A,q}$.
Let $\gamma_{x,\xi_{\pm}}$ be the projection of $\Gamma_{x, \xi_{\pm}}$ onto $M$.
Note that $\gamma_{x, \xi_{\pm}}$ is simply the unique null-geodesic passing through $x$ with direction $\dot{\gamma}_{x, \xi_{\pm}} = -2\xi_{\pm}^{\sharp}$.
In the semi-geodesic normal coordinates, the unique inward/outward-pointing preimages of $(x', \xi')$ are $(x, \xi_\pm)$, where we write $x = (x', 0)$ and $\xi_\pm$ are given by
\[
\xi_\pm = (\xi', \pm \xi_n), \quad \text{ with } \xi_n := \sqrt{-g^{\alpha \beta} \xi_{\alpha}\xi_{\beta}}.
\]
The $\partial_{x^n}$ component of the Hamiltonian vector field at $(x', 0, \xi_{\pm})$ is thus $\mp 2\xi_n \partial_{x^n}$.
This implies $\Gamma_{x, \xi_+}$ travels in the opposite direction as the parameter increases, that is,  $\Gamma_{x, \xi_+}((-s, 0)) \subset M^{\circ}$ for some $s > 0$.
On the other hand, $\Gamma_{x, \xi_-}$ travels in the same direction as the parameter increases, that is, $\Gamma_{x, \xi_-}((0, s)) \subset M^{\circ}$ for some $s > 0$.
Roughly speaking, when $s$ increases, a bicharacteristic $\Gamma(s)$ starting from the boundary transversally will always leave from some outward-pointing $(x, \xi_-)$ and then hits the boundary transversally at some inward-pointing $(y, \eta_+)$.

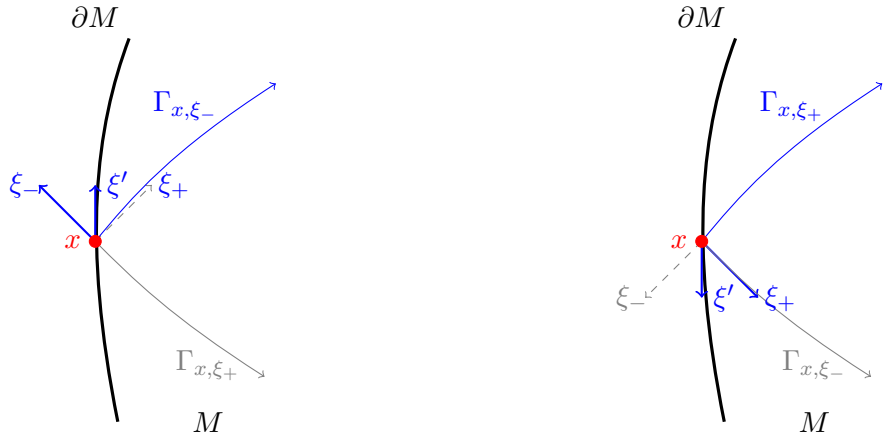
\begin{figure}[h]
        \begin{minipage}{0.48\textwidth}
        \centering
        \begin{tikzpicture}[scale=1.5]
            
            \node at (1,-0.6) {\small $M$};
            \draw[very thick] (0.2,-0.6) .. controls (-0.12,1) and (0,2) .. (0.3,2.8) (0,2.8) node[above] {\small $\partial M$};
            
            
                \draw[gray, ->, dashed] (0, 1) -- (0.5,  1.5);
                \draw[blue, thick, ->] (0, 1) -- (-0.5,  1.5);
                \draw[blue, thick, ->] (0, 1) -- (0,  1.5);

            \draw[gray, ->] (0, 1) .. controls (0.33,0.67) and (0.67,0.33) .. (1.5, -0.2);
            \node[gray] at (1,-0.1) {\small $ \Gamma_{x,\xi_{+}}$};    
            
            \draw[blue, ->] (0, 1) .. controls (0.33,1.4) and (0.67,1.8) .. (1.6, 2.4);
            \node[blue] at (0.8,2.2) {\small $ \Gamma_{x,\xi_{-}}$};    
            
                \node[blue] at (0.2,1.5) {\small $ \xi'$};   
                \node[blue] at (0.7,1.5) {\small $ \xi_{+}$};   
                \node[blue] at (-0.62,1.5) {\small $ \xi_{-}$};

            \foreach \x/\y in {0/1} {\filldraw[red] (\x,\y) circle (1.5pt);}
            \node[red] at (-0.2,1) {\small $x$};   
            
        \end{tikzpicture}
    \end{minipage}
    \begin{minipage}{0.48\textwidth}
        \centering
        \begin{tikzpicture}[scale=1.5]
            \node at (1,-0.6) {\small $M$};
          \draw[very thick] (0.2,-0.6) .. controls (-0.12,1) and (0,2) .. (0.3,2.8) (0,2.8) node[above] {\small $\partial M$};
          
          \draw[blue, thick, ->] (0, 1) -- (0.5, 0.5);
          \draw[gray, ->, dashed] (0, 1) -- (-0.5, 0.5);
          \draw[blue, thick, ->] (0, 1) -- (0, 0.5);
          
          \draw[gray, ->] (0, 1) .. controls (0.33,0.67) and (0.67,0.33) .. (1.5, -0.2);
          \node[gray] at (1,-0.1) {\small $ \Gamma_{x,\xi_{-}}$};    
          
          \draw[blue, ->] (0, 1) .. controls (0.33,1.4) and (0.67,1.8) .. (1.6, 2.4);
          \node[blue] at (0.8,2.2) {\small $ \Gamma_{x,\xi_{+}}$};    
        
          \node[blue] at (0.2,0.5) {\small $\xi'$};
          \node[blue] at (0.7,0.5) {\small $ \xi_{+}$};
          
          \node[gray] at (-0.62,0.5) {\small $ \xi_{-}$}; 
          
          \foreach \x/\y in {0/1} {\filldraw[red] (\x,\y) circle (1.5pt);}
          \node[red] at (-0.2,1) {\small $x$};   
          
        \end{tikzpicture}
    \end{minipage}
    \caption{If $(x', \xi')$ is past-pointing (left), then the unique forward null-bicharacteristic leaves from its outward preimage $(x, \xi_-)$ and travels forward in time; if $(x', \xi')$ is future-pointing (right), then the unique forward null-bicharacteristic leaves from its inward preimage $(x, \xi_+)$ and travels backward in time. In conclusion, the null-bicharacteristic is forward when it lies in the casual future of $x$, regardless of the flow direction.}
    \centering
\end{figure}

If the time orientation is given, then a vector $v$ is called future-pointing if it is in the future-pointing light cone.
A covector $\xi$ is called future-pointing if the corresponding vector $\xi^{\sharp}$ is.
Recall we assume solutions to the wave equation are only supported in the causal future of the initial data.
In other words, information travels along the null-bicharacteristics into the causal future.
For every boundary covector $(x', \xi') \in \mathcal{H}$,
its preimages $(x, \xi_{\pm})$ give us four choices of the null-bicharacteristic, depending on whether it is contained in the interior or exterior of $M$ and whether it is in the causal future or causal past of $x$.
Here recall we write $x = (x',0)$.
Thus, there exists a unique null-bicharacteristic starting from either $(x, \xi_+)$ or $(x, \xi_-)$, and lying in the interior of $M$ as well as the causal future of $x'$.
We call it the unique \textit{forward null-bicharacteristic} corresponding to $(x', \xi')$.
When $(x', \xi')$ is future-pointing, the forward null-bicharacteristic is the one traveling in the opposite direction as the parameter increases, starting from $(x, \xi_+)$; when $(x', \xi')$ is past-pointing, the forward null-bicharacteristic is the one traveling in the opposite direction as the parameter increases, starting from $(x, \xi_-)$.
We say $\gamma$ is the unique \textit{forward null-geodesic} corresponding to $(x', \xi')$, if it is the projection of the unique forward null-bicharacteristic.
Note that the natural parameterization of $\gamma$ would give us $\dot{\gamma}(0) = -2(\xi')^{\sharp}$ by \eqref{eq: symbol of P}.
We remark that our definition of the forward null-bicharacteristic corresponds to the future-pointing null-bicharacteristic defined in \cite{Stefanov2018}, but it flows in the opposite direction.
This is because we chose $p = -g^{jk}\xi_j\xi_k$, whereas their principal symbol is $p = \frac{1}{2}g^{jk}\xi_j\xi_k$.

Note that under the strictly null-convex assumption, if a null-geodesic starting from the interior hits the boundary, then it must do so transversally,
see \cite[Proposition 2.4]{Hintz2017}.
This implies that null-bicharacteristics starting from the interior only intersect the boundary in the hyperbolic set.
In addition, we state the following proposition, using the ideas from \cite{SaBWun} and \cite[Proposition 7.1]{SaBUhlWan}.
\begin{prop}\label{prop: Lambda_restriciton}
    Suppose $\pM$ is timelike and strictly null convex.
    Let $\Lambda \subset T^*\intM \setminus 0$ be a smooth conic Lagrangian submanifold contained in the characteristic set $\Char(P_{g, A, q})$.
    Suppose $\Lambda$ intersects $\partial M$.
    Then the intersection is transversal and therefore
    \[
    \Lambda' \coloneqq \Lambda|_{\pM},
    \]
    is a smooth conic Lagrangian submanifold of $T^*\pM$.
    We may extend $\Lambda$ across $\pM$ as a smooth conic Lagrangian submanifold of a larger Lorentzian manifold $\tM$.
    Moreover, if $u \in I^{\mu + \frac{1}{4}}(\Lambda)$ is a Lagrangian distribution,
    then its restriction
    \[
    u_0 \in I^\mu(\Lambda')
    \]
    is also a Lagrangian distribution.
\end{prop}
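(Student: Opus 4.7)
My plan is to derive all claims of the proposition from the single geometric fact that strict null-convexity forces every bicharacteristic in $T^*\intM$ that meets $\pM$ to cross it transversally. I would work in boundary normal coordinates $(x', x^n)$ adapted to $\pM$, so that $g^{nn} = 1$ and $g^{\alpha n} = 0$. In $T^*\tM$, the hypersurface $T^*_{\pM}\tM = \{x^n = 0\}$ is coisotropic with one-dimensional null foliation spanned by $\partial_{\xi_n}$, and its symplectic reduction is canonically identified with $T^*\pM$ via $\iota^* \colon \xi \mapsto \xi|_{T\pM}$. At any $\rho = (x,\xi)$ where $\Lambda$ (or its closure in $T^*\tM$) meets $T^*_{\pM}\tM$, the transversality of $\Lambda$ with $T^*_{\pM}\tM$ is equivalent, by the Lagrangian property of $\Lambda$, to $\partial_{\xi_n} \notin T_\rho \Lambda$.

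\textbf{Transversality.} Since $\Lambda$ is Lagrangian inside $\Char(P_{g,A,q})$, the Hamiltonian vector field $H_p$ is tangent to $\Lambda$ at every point, and $dx^n(H_p) = \partial p/\partial \xi_n = -2\xi_n$; thus transversality holds whenever $\xi_n \neq 0$. Suppose for contradiction that $\xi_n = 0$ at $\rho$. Then $\xi^\sharp$ is null and tangent to $\pM$, and strict null-convexity (condition (1.1)) gives $x^n(\gamma(s)) = \tfrac{1}{2} II(\xi^\sharp,\xi^\sharp)\, s^2 + O(s^3) > 0$ for all small nonzero $s$, where $\gamma$ is the null-geodesic through $x$ with $\dot{\gamma}(0) = -2\xi^\sharp$. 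The lift of $\gamma$ is the bicharacteristic of $H_p$ through $\rho$; since $\Lambda$ is smooth and $H_p$-tangent, this bicharacteristic lies in $\Lambda$ locally, so $\Lambda$ would contain points with $x^n > 0$ arbitrarily close to $\rho$, contradicting $\Lambda \subset T^*\intM \setminus 0$. Hence $\xi_n \neq 0$ at every such $\rho$ and the intersection is transversal.

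\textbf{Lagrangian structure of $\Lambda'$ and the extension.} Transversality gives a smooth conic $(n-1)$-dimensional submanifold $\Sigma := \Lambda \cap T^*_{\pM}\tM$, and its image $\Lambda' = \iota^*(\Sigma)$ is a smooth conic Lagrangian submanifold of $T^*\pM$ by the standard symplectic reduction of a Lagrangian transverse to a coisotropic. To extend $\Lambda$ across $\pM$ into the larger Lorentzian manifold $\tM$, extend $g, A, q$ smoothly past $\pM$ and take the $H_p$-flowout of $\Sigma$ on the exterior side of $\pM$; transversal crossings make the union smooth, while $H_p$-flow invariance preserves both the Lagrangian property and containment in the characteristic set of the extended operator.

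\textbf{Restriction of the Lagrangian distribution.} For $u \in I^{\mu + 1/4}(\Lambda;\Omega^{1/2}_{\tM})$, the wavefront condition for distributional pullback is immediate: $N^*\pM$ consists of covectors of the form $(x', 0; 0, \xi_n)$ with $|\xi|^2_g = \xi_n^2 > 0$, so these are spacelike and not in $\Char(P_{g,A,q})$, giving $\Lambda \cap N^*\pM = \emptyset$. The classical restriction theorem for Lagrangian distributions (see \cite[\S 25.2]{MR2304165}) then yields $u_0 \in I^\mu(\Lambda';\Omega^{1/2}_{\pM})$. Concretely, writing $u$ locally as $\int e^{i\phi(x,\theta)} a(x,\theta) \, d\theta \, |dx|^{1/2}$ with $\Lambda$ parameterized by $\phi_\theta = 0$, one sets $x^n = 0$ and applies stationary phase in one extra variable conjugate to the normal direction, permitted precisely because $\xi_n \neq 0$ on $\Lambda$; the order shift from $\mu + 1/4$ to $\mu$ matches the change in the half-density normalization $(n - 2N)/4$ when passing from dimension $n$ to dimension $n-1$. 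The main obstacle is the careful bookkeeping of these half-density factors and Maslov contributions so that the shift is exactly $1/4$, and so that the principal symbol of $u_0$ is identified with the natural pullback-and-reduction of $\sigma(u)$ on the half-density bundle over $\Lambda'$.
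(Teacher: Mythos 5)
Your proof follows the same overall route as the paper's: use strict null-convexity to obtain transversality of the intersection, then symplectic reduction of $\Lambda$ through the coisotropic hypersurface $T^*_{\pM}\tM$ and extension across $\pM$ by the $H_p$-flowout, and finally the standard restriction theorem for Lagrangian distributions. The paper simply delegates these three steps to references (transversality to \cite[Proposition~2.4]{Hintz2017}, extension to \cite{SaBWun}, restriction to \cite[Proposition~7.1]{SaBUhlWan}), whereas you unpack the arguments, which is worthwhile. Two things need to be tightened.

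\textbf{Sign convention in the transversality step.} The paper's convention (see the proof of Lemma~\ref{lemma: broken bichar converge to gliding on boundary}, and the use of semi-geodesic normal coordinates in Lemma~\ref{lem: semi-geodesic}) is that $x^n \geq 0$ with interior $= \{x^n > 0\}$. In that convention, for a boundary-tangential null-geodesic $\gamma$ one finds $\frac{d^2}{ds^2}\big|_{s=0}\,x^n(\gamma(s)) = -\,g(\nabla_{\dot{\gamma}(0)}\nu,\dot{\gamma}(0)) = -II(\dot\gamma(0),\dot\gamma(0)) < 0$, so $x^n(\gamma(s)) < 0$ for small $s\neq 0$: the geodesic immediately leaves $M$. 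You wrote $x^n(\gamma(s)) = \tfrac12\,II(\xi^\sharp,\xi^\sharp)\,s^2 + O(s^3) > 0$ and then concluded that ``$\Lambda$ would contain points with $x^n>0$\ldots contradicting $\Lambda\subset T^*\intM$.'' In the paper's convention $\{x^n>0\}$ \emph{is} the interior, so as stated there is no contradiction. Your argument is only self-consistent if you silently adopted the opposite convention (interior $= \{x^n<0\}$, outward normal $\nu = \partial_{x^n}$), which flips both signs and makes the formula and the conclusion agree. Either convention is legitimate, but the one you are using should be stated; in the paper's convention the displayed sign is wrong and the correct contradiction is that the bicharacteristic exits $M$ while its limit set must lie in $T^*M = \{x^n\geq 0\}$.

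\textbf{Closure vs.\ $\Lambda$.} The phrase ``this bicharacteristic lies in $\Lambda$ locally'' is imprecise: $\rho$ has $x^n=0$, so $\rho\notin T^*\intM$ and hence $\rho\notin\Lambda$. What is true is that $\rho$ lies in the closure $\bar\Lambda$ of $\Lambda$ in $T^*\tM$, that $H_p$ is tangent to $\bar\Lambda$ by continuity, and that the $H_p$-integral curve through $\rho$ therefore lies in $\bar\Lambda$ locally. The contradiction is then with $\bar\Lambda\subset T^*M$, not directly with $\Lambda\subset T^*\intM$. With that correction and the sign fixed, the transversality argument is a valid direct proof of what the paper cites to Hintz. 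The remaining steps (symplectic-reduction description of $\Lambda'$, the $H_p$-flowout extension, the observation that $\Lambda\cap N^*\pM = \emptyset$ since $N^*\pM$ is spacelike, and the order shift $\mu+\tfrac14 \to \mu$ from the restriction theorem) are all correct and match the paper's cited sources.
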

\begin{proof}
    By \cite[Proposition 2.4]{Hintz2017}, the Hamiltonian vector field $H_p$ is transversal to $\pM$, as its projection to $M$, i.e., the null vector field, hits $\pM$ transversally.
    Since $\Lambda \subset \Char(P_{g, A, q})$, one has $H_p$ is tangent to $\Lambda$ and therefore $\Lambda$ intersects $\pM$ transversally.
    As observed in \cite{SaBWun}, $\Lambda$ can be extended across the boundary as the union of integral curves of $H_p$, starting from $\Lambda$ in $\intM$.
    Then the same proof in \cite[Proposition 7.1]{SaBUhlWan} indicates the rest of the statements.
\end{proof}

Now we define the lens relation.
\begin{defn}\label{def: lens}
    Let $(x', \xi') \in \mathcal{H}$, and let $\Gamma$ be the unique forward bicharacteristic corresponding to $(x', \xi')$. Denote by $\mathcal{H}'$ the set of $(x', \xi') \in \mathcal{H}$ such that $\Gamma$ hits the boundary again at some $(y, \eta)$. Then the forward lens relation $L$ is defined by
    \[
    L: \mathcal{H}' \to \mathcal{H}, \quad (x', \xi') \to (y', \eta'),
    \]
    where $(y', \eta') = \pi(y, \eta)$ with the natural projection $\pi: T^*M|_{\partial M} \to T^*\partial M$. In particular, $L$ is invertible on its range, and $L^{-1}$ is the backward lens relation. For all $k \in \mathbb{Z}$, $L^k$ is called the $k$-th lens relation.
\end{defn}
\begin{rem}\label{rem: lens relation}
We emphasize that $L$ can be defined locally near $(x', \xi')$.
Moreover, $L$ is called the forward lens relation, as $y'$ is always in the causal future of $x'$.
Throughout this work, we always use $L$ to denote the forward lens relation, although for convenience we shall refer to it simply as the lens relation.
\end{rem}


\subsection{Gauge equivalence}\label{subsec: gauge equivalence}
There exist some natural gauge transformations such that the restricted DN map $\Lambda^{U,V}_{g,A,q}$ is invariant.
In the following, we first introduce the semi-geodesic normal coordinates on Lorentzian manifold with timelike boundary.
\begin{lem}[\cite{Stefanov2018} Lemma 2.3]\label{lem: semi-geodesic}
    For every $x \in \partial M$, there exists $\epsilon > 0$, a neighborhood $N$ of $x$ in $M$, and a diffeomorphism $\Psi: \partial M \cap N \times [0, T) \to N$ such that
    \begin{enumerate}
        \item $\Psi(x', 0) = x'$ for all $x' \in \partial M \cap N$;
        \item $\Psi(x', x^n) = \gamma_{x'}(x^n)$ where $\gamma_{x'}(x^n)$ is the unit speed geodesic issued from $x'$ normal to $\partial M$.
    \end{enumerate}
    If we write $x' =(x^1, \dots, x^{n-1})$ as local boundary coordinates on $\partial M$, in the coordinate system $(x', x^n)$, the metric $g$ takes the form
    \[
    g = g_{\alpha\beta}dx^{\alpha}\otimes dx^{\beta}+dx^n \otimes dx^n, \quad \alpha,\beta \leq n-1.
    \]
    For any two metrics $g_1$ and $g_2$, there exists a boundary fixing diffeomorphism $\Psi: M \to M$, $\Psi|_{\partial M} = \mathrm{Id}_{\partial M}$, such that $(x', x^n)$ is the semi-geodesic normal coordinates for $g_1$ if and only if it is the semi-geodesic normal coordinates for $\Psi^* g_2$.
\end{lem}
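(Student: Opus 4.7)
The plan is to build the chart $\Psi$ directly from the normal exponential map, verify the Gauss-lemma form of the metric, and then handle the joint statement about two metrics by composing the two charts.

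First I would construct $\Psi$. Since $\partial M$ is timelike, the outward unit normal $\nu$ is well defined with $g(\nu,\nu)=1$ (spacelike), and the geodesic $\gamma_{x'}(x^n)=\exp_{x'}\!\bigl(x^n\,\nu(x')\bigr)$ makes sense for small $x^n\ge 0$. Picking local boundary coordinates $x'=(x^1,\dots,x^{n-1})$ on $\partial M\cap N$, set $\Psi(x',x^n)=\gamma_{x'}(x^n)$. At $x^n=0$ the differential $d\Psi$ sends $\partial_n$ to $\nu$ and each $\partial_\alpha$ into $T\partial M$, so the family $\{\partial_1,\dots,\partial_{n-1},\nu\}$ is a basis and $\Psi$ is a local diffeomorphism by the inverse function theorem. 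Shrinking $N$ and choosing $T>0$ gives the first two bulleted properties by construction.

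Second, I would verify the coordinate expression of $g$. The curve $x^n\mapsto\Psi(x',x^n)$ is a unit-speed geodesic, so $g_{nn}=g(\partial_n,\partial_n)=g(\dot\gamma,\dot\gamma)\equiv 1$. For the mixed terms, set $f_\alpha(x^n):=g(\partial_n,\partial_\alpha)$ and compute
\begin{equation*}
\partial_n f_\alpha
= g(\nabla_{\partial_n}\partial_n,\partial_\alpha)+g(\partial_n,\nabla_{\partial_n}\partial_\alpha)
= 0+g(\partial_n,\nabla_{\partial_\alpha}\partial_n)
= \tfrac12\,\partial_\alpha\,g(\partial_n,\partial_n)=0,
\end{equation*}
using the geodesic equation for the first term and $[\partial_n,\partial_\alpha]=0$ to symmetrize. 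At $x^n=0$ one has $f_\alpha(0)=g(\nu,\partial_\alpha)=0$ because $\partial_\alpha\in T\partial M$. Hence $g_{\alpha n}\equiv 0$, giving the claimed block form.

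Third, for the joint statement I would fix one chart $(x^1,\dots,x^{n-1})$ on $\partial M\cap N$ and apply the construction above to each of $g_1$ and $g_2$ separately, producing $\Psi_1$ and $\Psi_2$. Define the local map $\Psi:=\Psi_2\circ\Psi_1^{-1}$, extended to a global diffeomorphism of $M$ by a cutoff away from the boundary; by construction $\Psi|_{\partial M}=\mathrm{Id}_{\partial M}$. The curves $t\mapsto(x',t)$, which are the coordinate lines of the $\Psi_1$-chart, get pushed forward by $\Psi$ to the corresponding coordinate lines of the $\Psi_2$-chart, and these are unit-speed $g_2$-geodesics emanating normally from $\partial M$. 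Pulling back, the same curves are unit-speed geodesics of $\Psi^*g_2$, normal to $\partial M$ at $t=0$ (the tangent there is $\nu_{g_2}$, which is orthogonal to $T\partial M$ with respect to $g_2$, hence to $\Psi^*g_2$). Therefore $(x',x^n)$ is semi-geodesic normal for $g_1$ iff it is so for $\Psi^*g_2$.

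The only point that requires care is the Lorentzian signature: one must check that $\nu$ is spacelike (guaranteed by the timelike boundary assumption) so that normal geodesics do not degenerate, and that the Gauss-lemma computation works without sign pathologies — but the argument above only uses $g(\nu,\nu)\neq 0$ and metric compatibility of $\nabla$, so the proof proceeds exactly as in the Riemannian case. No obstacle beyond this, and the Stefanov reference cited in the statement confirms the construction.
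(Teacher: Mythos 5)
Your construction is correct and is the standard proof of boundary normal (Fermi) coordinates: build $\Psi$ from the normal exponential map, verify $g_{nn}\equiv 1$ by unit speed and $g_{\alpha n}\equiv 0$ by the Gauss-lemma computation using $\nabla_{\partial_n}\partial_n=0$ and $[\partial_n,\partial_\alpha]=0$, and compose the two charts for the last assertion. The paper does not reproduce a proof (it cites \cite{Stefanov2018}), so there is no alternative route to compare against; your argument matches what the citation is pointing to. One small sign slip: you take $\nu$ to be the \emph{outward} unit normal and set $\gamma_{x'}(x^n)=\exp_{x'}(x^n\nu(x'))$, but since $\Psi$ is meant to map $\partial M\cap N\times[0,T)$ \emph{into} $M$, and the paper's convention has $\partial_\nu=-\partial_{x^n}$ on the boundary, you should issue the geodesics along the inward normal $-\nu$; this does not affect any of the computations.
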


Now we state the gauge equivalence, following the same argument as in \cite{Stefanov2018}.
\begin{lem}
    Let $(M, g)$ be an $n$-dimensional admissible Lorentzian manifold with boundary.
    Let $A$ be a smooth 1-form and $q$ be a smooth function on $M$.
    Suppose $U$ and $V$ are open connected subsets of $\pM$ such that $U \cap V = \emptyset$.
    Suppose $\Psi: M \to M$ is a diffeomorphism such that $\Psi|_{U \cup V} = \mathrm{Id}|_{U\cup V}$, then
    \[
    \Lambda^{U,V}_{g,A,q} = \Lambda^{U,V}_{\Psi^*g,\Psi^*A,\Psi^*q}.
    \]
\end{lem}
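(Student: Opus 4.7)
The plan is to pull back the solution $u$ by $\Psi$ and verify that $v := \Psi^* u = u \circ \Psi$ is the outgoing solution to the boundary value problem for $(\Psi^*g, \Psi^*A, \Psi^*q)$ with the same Dirichlet data $f$, then to check that the Neumann trace on $V$ is preserved pointwise.

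For the first step, naturality of the covariantly defined wave operator gives $\Box_{\Psi^*g,\Psi^*A,\Psi^*q}(\Psi^* u) = \Psi^*(\Box_{g,A,q} u) = 0$ in $M^\circ$. To verify $v|_{\pM} = f$, I would use that $\Psi$ is a bijection with $\Psi|_U = \text{Id}$, so $\Psi(U) = U$ and hence $\Psi(M \setminus U) = M \setminus U$; thus for $p \notin U$ one has $\Psi(p) \notin U \supseteq \supp f$ and $v|_{\pM}(p) = f(\Psi(p)) = 0$, while for $p \in U$ we have $v|_{\pM}(p) = f(p)$. For the outgoing support condition, note that $(M, \Psi^* g)$ is admissible with temporal function $\tau \circ \Psi$ and that the causal future transforms naturally as $J^+_{\Psi^* g}(S) = \Psi^{-1}(J^+_g(\Psi(S)))$; combined with $\Psi(\supp f) = \supp f$, this gives $\supp v = \Psi^{-1}(\supp u) \subseteq J^+_{\Psi^* g}(\supp f)$. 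By well-posedness of the pulled-back problem, $v$ is the unique outgoing solution with Dirichlet data $f$, so $\Lambda^{U,V}_{\Psi^*g,\Psi^*A,\Psi^*q}(f) = \partial_{\tilde\nu} v|_V$ with $\tilde\nu$ the outward unit normal for $\Psi^* g$.

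The second step is to show $d\Psi_p(\tilde\nu_p) = \nu_p$ for $p \in V$. Since $\Psi$ is the identity on the open set $V \subset \pM$, differentiating tangentially gives $d\Psi_p|_{T_p \pM} = \text{Id}$. Then for any $X \in T_p \pM$, $(\Psi^* g)_p(\tilde\nu_p, X) = g_p(d\Psi_p \tilde\nu_p, X) = 0$, forcing $d\Psi_p(\tilde\nu_p) = \lambda \nu_p$ for some $\lambda \in \mathbb{R}$. Normalization $(\Psi^* g)_p(\tilde\nu_p, \tilde\nu_p) = g_p(d\Psi_p \tilde\nu_p, d\Psi_p \tilde\nu_p) = \lambda^2 = 1$, together with preservation of outward-pointingness under $\Psi$, gives $\lambda = 1$. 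Hence $\partial_{\tilde\nu} v(p) = (\tilde\nu_p)(u \circ \Psi) = (d\Psi_p \tilde\nu_p)(u) = \partial_\nu u(p)$. Since $U \cap V = \emptyset$ and $\supp f \subseteq U$, the magnetic term $-i\langle A, \nu\rangle u|_V$ vanishes for both DN maps (as $u|_V = v|_V = 0$), and the identity follows. The main subtlety I anticipate is transferring the outgoing support condition to $\Psi^* g$, which requires a consistent choice of temporal function and the naturality of the Lorentzian causal structure; the normal identification itself is clean precisely because $\Psi$ restricts to the identity on an \emph{open} piece of $\pM$, which automatically pins down $d\Psi_p$ on $T_p \pM$.
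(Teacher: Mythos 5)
Your proof is correct and takes essentially the same approach as the paper, which simply invokes \cite[Lemma 2.1]{Stefanov2018}: pull back the outgoing solution by $\Psi$, verify it is the outgoing solution for the pulled-back data $(\Psi^*g,\Psi^*A,\Psi^*q)$ with the same Dirichlet trace, and match the Neumann traces on $V$ using $d\Psi_p\tilde\nu_p=\nu_p$. You have in fact spelled out the argument in more detail than the paper does (checking $\Psi(U)=U$, naturality of $J^+$, admissibility of $(M,\Psi^*g)$, and the normal identification), which is useful but not a different route.
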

\begin{proof}
    The proof follows directly from \cite[Lemma 2.1]{Stefanov2018}.
\end{proof}

Another class of gauge transformations is given by conformal changes, but unlike the case of full boundary data (see \cite[Lemma 2.2]{Stefanov2018}), we have fewer restrictions here.
\begin{lem}\label{lem: gauge2}
    Let $(M, g)$ be an $n$-dimensional admissible Lorentzian manifold with boundary.
    Let $A$ be a smooth 1-form and $q$ be a smooth function on $M$. Suppose $U$ and $V$ are open connected subsets of $\pM$ such that $U \cap V = \emptyset$.
    Let $\varphi$ and $\psi$ be smooth functions satisfying
    \[
    \varphi|_{U} \equiv \frac{c}{n-2}, \quad  \varphi|_{V} \equiv \frac{c}{n}, \quad \psi|_{U \cup V} = 0,
    \]
    for some constant $c$. Then
    \[
    \Lambda^{U, V}_{g,A,q} = \Lambda^{U,V}_{e^{-2\varphi}g, A-d\psi,e^{2\varphi}(q + q_{\varphi})}
    \]
    where $q_{\varphi} = e^{\frac{n-2}{2}\varphi}\Box_g e^{\frac{2-n}{2}\varphi}$.
\end{lem}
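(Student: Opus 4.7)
The plan is to decompose the claimed gauge into two well-known pieces — a conformal change $g \mapsto \tilde g = e^{-2\varphi} g$ with scalar twist $q \mapsto \tilde q = e^{2\varphi}(q + q_\varphi)$, and a magnetic gauge $A \mapsto A - d\psi$ — verify each separately, and then track three rescaling factors on $V$ that turn out to cancel exactly because of the arithmetic $\tfrac{n-2}{2}\cdot\tfrac{c}{n-2} = \tfrac{c}{2}$ and $\tfrac{n-2}{2}\cdot\tfrac{c}{n} + \tfrac{c}{n} = \tfrac{c}{2}$.

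First I would prove the conformal covariance identity
\[
\Box_{\tilde g, A, \tilde q}\bigl(e^{\frac{n-2}{2}\varphi} u\bigr) \;=\; e^{\frac{n+2}{2}\varphi}\, \Box_{g, A, q} u
\]
for any scalar $u$. In local coordinates, $\tilde g^{jk} = e^{2\varphi} g^{jk}$ and $\sqrt{|\tilde g|} = e^{-n\varphi}\sqrt{|g|}$; expanding $\Box_{\tilde g,A,\tilde q}$ and using the Leibniz rule — noting that the magnetic derivative $D_j = \partial_j - iA_j$ obeys the ordinary product rule against scalar multipliers — produces cross terms linear in $\partial_j\varphi$ that cancel precisely because the exponent $(n-2)/2$ is chosen as the Yamabe weight. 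The remaining zeroth-order term is $e^{\frac{n+2}{2}\varphi} q_\varphi u$ by the very definition $q_\varphi = e^{\frac{n-2}{2}\varphi}\Box_g e^{\frac{2-n}{2}\varphi}$. The magnetic gauge identity $\Box_{g, A-d\psi, q}(e^{-i\psi} u) = e^{-i\psi}\Box_{g,A,q} u$ is an easy consequence of $(\partial_j - i(A_j-\partial_j\psi))(e^{-i\psi} v) = e^{-i\psi}(\partial_j - iA_j) v$.

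Composing, if $u$ is the outgoing solution to $\Box_{g,A,q} u = 0$ with $u|_{\pM} = f$ supported in $U$, then
\[
\tilde u \;:=\; e^{-c/2}\, e^{\frac{n-2}{2}\varphi - i\psi}\, u
\]
solves $\Box_{\tilde g, \tilde A, \tilde q}\tilde u = 0$ and remains outgoing since the multiplier is smooth and nonvanishing. Using $\varphi|_U = c/(n-2)$ and $\psi|_U = 0$ one checks $\tilde u|_U = f$, while $\tilde u|_V = 0$ because $u|_V = 0$ (as $f$ is supported in $U$ and $U\cap V = \emptyset$). To compare the DN outputs on $V$, note that the unit outward normal for $\tilde g$ is $\tilde\nu = e^\varphi \nu$ along $\pM$. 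Since $\tilde u|_V = 0$ the magnetic piece $-i\langle\tilde A,\tilde\nu\rangle_{\tilde g}\tilde u$ vanishes, and the derivative of the multiplier in $\partial_\nu\tilde u|_V$ drops out, leaving
\[
\Lambda^{U,V}_{\tilde g,\tilde A,\tilde q} f \;=\; \partial_{\tilde\nu}\tilde u|_V \;=\; e^{\varphi|_V}\,e^{-c/2}\,e^{\frac{n-2}{2}\varphi|_V}\,\partial_\nu u|_V.
\]
Substituting $\varphi|_V = c/n$, the total exponent equals $c/n - c/2 + (n-2)c/(2n) = 0$, so $\Lambda^{U,V}_{\tilde g,\tilde A,\tilde q} f = \partial_\nu u|_V = \Lambda^{U,V}_{g,A,q} f$. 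Linearity extends this from $f\in C_c^\infty(U)$ to any distributional $f$ allowed by the DN definition.

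The main obstacle is none of the individual computations — each is routine — but the correct bookkeeping of normal vectors and conformal weights in the last step: one must use $\tilde\nu$ (not $\nu$) when writing the DN map for the transformed metric, and one must track simultaneously the renormalization $e^{-c/2}$ imposed by matching Dirichlet data on $U$, the multiplier value $e^{\frac{n-2}{2}\varphi|_V}$ on $V$, and the Jacobian $e^{\varphi|_V}$ from $\tilde\nu = e^\varphi\nu$. It is exactly the asymmetric choice $\varphi|_U = c/(n-2)$ versus $\varphi|_V = c/n$ that makes these three factors cancel; this is why the restricted DN map admits a larger gauge group than the full-data DN map of \cite{Stefanov2018}, where $\varphi$ must vanish on all of $\pM$.
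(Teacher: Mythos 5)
Your proof is correct and takes essentially the same route as the paper: conjugate the equation by the conformal/magnetic multiplier, match Dirichlet data on $U$, and track the exponent cancellation on $V$ using $\tilde\nu = e^\varphi\nu$. Your normalization $e^{-c/2}$ is in fact the one that makes $\tilde u|_U = f$ exactly (the paper writes $e^{-c}$, which gives $v|_U = e^{-c/2}f$ and propagates a compensating factor by linearity — the conclusion is the same, but your bookkeeping is cleaner and your exponent arithmetic $c/n - c/2 + (n-2)c/(2n) = 0$ is the transparent version of the cancellation).
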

\begin{proof}
    Same computation as in \cite[Lemma 2.2]{Stefanov2018} shows that if $v = e^{-c}e^{\frac{n-2}{2}\varphi}e^{-i\psi}u$ and $u, f$ satisfies $\Box_{g,A,q}u = 0$, $u|_{\partial M} = f$, then
    \[
    \Box_{e^{-2\varphi}g, A-d\psi,e^{2\varphi}(q+q_{\varphi})}v = 0, \quad v|_{\partial M} = f.
    \]
    By our assumptions on $\varphi$, we have $\nu_{e^{-2\varphi}g} = e^{\varphi}\nu_g = e^{\frac{2}{n}c}\nu_g$ on $V$.
    Use the fact that $U\cap V = \emptyset$, which means $\Lambda^{U,V}_{g,A,q}f = \partial_{\nu}u|_V$, we have
    \begin{equation*}
        \Lambda^{U,V}_{e^{-2\varphi}g, A-d\psi,e^{2\varphi}(q {\blue +} q_{\varphi})}f = e^{\frac{2}{n}c}\partial_{\nu}v|_V = e^{\frac{2}{n}c-c+\frac{n-2}{n}c}\partial_{\nu}u|_V = \Lambda^{U,V}_{g,A,q}f.
    \end{equation*}
\end{proof}

\subsection{Broken bicharacteristics and gliding rays}\label{subsec: gliding}


We first define the broken null-geodesics (see \cite[Definition 2.7]{Hintz2017}) and the broken bicharacteristics (see \cite[Definition 24.2.2]{MR2304165}).
\begin{defn}\label{def: brokengeo}
    Let $I \subset \mathbb{R}$ be an open connected interval.
    A piecewise smooth curve $\gamma: I \to M$ is called a \emph{broken null-geodesic} if
    \begin{enumerate}
        \item for all open intervals $J \subset I$ with $\gamma(J) \cap \partial M = \emptyset$, $\gamma|_J$ is an affinely parametrized null-geodesic in $(M^{\circ}, g)$;
        \item if $s \in I$ and $\gamma(s) \in \partial M$, then $\gamma|_{(s-\epsilon, s]}$ and $\gamma|_{[s, s+\epsilon)}$ are null-geodesics with $\gamma(s\pm (0, \epsilon)) \subset M^{\circ}$ for small $\epsilon > 0$, and we have $\pi(\dot{\gamma}(s+0))^{\flat} = \pi(\dot{\gamma}(s-0))^{\flat}$.
    \end{enumerate}
\end{defn}
\begin{defn}\label{def: brokenbichar}
    Let $I \subset \mathbb{R}$ be an open connected interval and $B \subset \mR$ be a discrete subset.
    A \emph{broken bicharacteristic} of $P_{g,A,q}$ is a map $\Gamma: I\backslash B \to T^*M^{\circ}$  such that
    \begin{enumerate}
        \item if $J \subset I\backslash B$ is an interval, then $\Gamma|_J$ is a bicharacteristic of $P_{g,A,q}$ in $M^{\circ}$;
        \item if $s \in B$,  then $\Gamma(s\pm 0)$ exist and belong to $T^*M\backslash 0|_{\partial M}$ with $\pi(\Gamma(s+0)) = \pi(\Gamma(s-0))$.
    \end{enumerate}
\end{defn}
For notation simplicity, when $s \in B$, we define $\Gamma(s) = \pi(\Gamma(s+0)) = \pi(\Gamma(s-0)) \in T^*\partial M$.
Thus, the broken bicharacteristic is defined on the entire $I$ and takes value in $T^*M^{\circ} \sqcup T^*\partial M$. We shall use $\Gamma|_{\partial M} \subset T^*\partial M$ to denote these boundary reflection points.
Note that broken bicharacteristic is not continuous, since when it reaches the boundary, it needs to switch from $(x, \xi_\pm)$ to $(x, \xi_\mp)$. A common way of fixing the non-continuous property is to define the compressed bundle and compressed broken bicharacteristic, which we do not go into details here, see \cite[Definition 24.3.7]{MR2304165}.

It is easy to see that the projection of a broken bicharacteristic is a broken null-geodesic.
By \cite[Proposition 2.12]{Hintz2017}, all inextendible broken null-geodesics are tame. Recall the following definition of tameness.
\begin{defn}[{\cite[Definition 2.11]{Hintz2017}}]
    Let $a \geq -\infty$ and $b \leq \infty$.
    We say an inextendible broken null-geodesic $\gamma: I \to \mathbb{R}$ is tame for $a < t < b$, if for all $a< a'$, $b<b'$, one has
    \[
    t(\gamma(I)) \cap (a, a')\neq \emptyset, \quad t(\gamma(I)) \cap (b', b) \neq \emptyset.
    \] If $\gamma$ is tame for $-\infty < t < \infty$, we simply say $\gamma$ is \emph{tame}.
\end{defn}
\begin{prop}[\cite{Hintz2017} Proposition 2.12]\label{prop: broken null-geod are tame}
    Let $(M, g)$ be an $n$-dimensional admissible Lorentzian manifold with boundary.
    Let $\gamma: I \to M$ be an inextendible broken null-geodesic. Then $I = \mathbb{R}$, and $\gamma$ is tame.
\end{prop}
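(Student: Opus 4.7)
The plan is to use the temporal function $\tau$ as a universal clock that is strictly monotonic along $\gamma$ and to use strict null-convexity of $\pM$ to prevent reflection times from accumulating; completeness $I = \mathbb{R}$ and tameness then follow from a compactness argument using properness of $\tau$ together with the reflection law at boundary impacts. To start, on each smooth piece of $\gamma$ the velocity $\dot\gamma$ is non-zero and null while $d\tau$ is timelike, so the pairing $\langle d\tau, \dot\gamma\rangle$ is non-zero with a definite sign matching the time orientation of $\dot\gamma$; reversing $\gamma$ if necessary I would take it to be future-directed, so $\tau \circ \gamma$ is strictly increasing on each smooth piece. At a reflection point only the $\pM$-normal component of $\dot\gamma$ flips sign while the tangential covector $\pi(\dot\gamma^\flat)$ is preserved, so the time orientation, and hence strict monotonicity of $\tau \circ \gamma$, extends continuously across reflections.

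The main technical step, and the principal obstacle, is to show that reflection times do not accumulate. Suppose toward contradiction that reflection parameters $s_n$ cluster at $s_\ast \in I$ with $\gamma(s_n) \to p \in \pM$. Fix an auxiliary Riemannian metric $h$ on $M$, normalize $\dot\gamma(s_n^-)$ in $h$, and pass to a subsequence so that the velocities converge to a unit null vector $V \in T_p M$. If $V \notin T_p \pM$, then continuous dependence on initial data provides a uniform positive lower bound on the parameter spent in $M^\circ$ between successive boundary impacts for trajectories with data sufficiently close to $(p, V)$, contradicting $s_{n+1} - s_n \to 0$. If $V \in T_p \pM$, then strict null-convexity $II(V, V) > 0$ combined with a second-order Taylor expansion shows that any null-geodesic issued from a point near $p$ in a direction close to $V$ enters $M^\circ$ and remains there for a uniform positive parameter on both sides of its starting time, again contradicting accumulation. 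Either way, the limit is ruled out and reflection times form a locally finite subset of $I$.

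With local finiteness of reflections secured, I would complete the proof as follows. Let $I = (\alpha, \beta)$ be the maximal interval of $\gamma$ and suppose $\beta < \infty$. By monotonicity, $\tau \circ \gamma(s)$ has a limit $\tau_\ast \in \mathbb{R} \cup \{+\infty\}$ as $s \to \beta^-$. If $\tau_\ast$ is finite, the tail of $\gamma$ lies in the compact set $\tau^{-1}([\tau(\gamma(s_0)), \tau_\ast])$ and by the no-accumulation step has only finitely many reflections, so $\gamma$ is eventually a smooth null-geodesic in a compact region; ODE theory extends it past $s = \beta$, and if the extension lands on $\pM$ then the reflection law provides a canonical continuation, contradicting inextendibility. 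If $\tau_\ast = +\infty$, a uniform lower bound on $|\langle d\tau, \dot\gamma\rangle|$ in each compact $\tau$-slab $\tau^{-1}([N, N+1])$, applied slab by slab under Riemannian-unit parametrization, shows that crossing infinitely many slabs requires an unbounded parameter interval, again a contradiction. Hence $\beta = +\infty$, and symmetrically $\alpha = -\infty$, so $I = \mathbb{R}$. Tameness is then immediate from strict monotonicity of $\tau \circ \gamma$ together with its unboundedness at both endpoints.
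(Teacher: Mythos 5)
Your plan to use the temporal function for monotonicity and properness for compactness is sound, and the transversal branch of the no-accumulation argument is correct. However, the glancing branch ($V \in T_p\pM$) inverts what strict null-convexity actually says, and this is a genuine gap. By the computation underlying Lemma~\ref{lemma: glancing is gliding}, $II(V,V) > 0$ is equivalent to $H_p^2\phi < 0$ at a glancing covector (with $\phi$ a boundary defining function positive in $M^\circ$), hence $\tfrac{d^2}{ds^2}\phi(\gamma(s))\big|_{s=0} = \nabla^2\phi(V,V) < 0$ along the null-geodesic through $(p,V)$, so this geodesic exits $M$ into the exterior on both sides of $s=0$, not into $M^\circ$. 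For a nearly-glancing direction whose inward normal speed is $a>0$, the second-order Taylor expansion gives $\phi(\gamma(s)) \approx a s - \tfrac{1}{2}K s^2$ with $K := -\nabla^2\phi(V,V) > 0$, so the geodesic stays in $M^\circ$ only for parameter time of order $2a/K$, which tends to $0$ as $a \to 0$. There is no uniform positive lower bound, and the contradiction you seek does not arise from this argument. The reason reflection times cannot cluster at a glancing tangency is different: the same Taylor expansion shows the normal speed at exit approximately equals the normal speed at entry, so along a single fixed broken null-geodesic the successive inter-bounce durations $s_{n+1}-s_n$ are approximately conserved rather than shrinking to zero. This is precisely the strictly-convex-billiard phenomenon behind Melrose--Sj\"ostrand gliding-ray theory, which the paper uses indirectly through Lemma~\ref{lemma: uniform convergence of broken bichar}. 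Because you reuse the no-accumulation step to deduce finitely many reflections in a compact $\tau$-slab, the gap propagates directly into your proof of $I = \mathbb{R}$.

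Two smaller points. First, accumulation at an interior $s_\ast \in I$ is already excluded by Definition~\ref{def: brokengeo}, which forces $\gamma(s\pm(0,\epsilon)) \subset M^\circ$ at every boundary parameter, so boundary hits are isolated inside $I$; the substantive case is accumulation at $\sup I$ or $\inf I$, which the phrasing ``$s_\ast \in I$'' does not cover. Second, in the branch $\tau_\ast = +\infty$ with $\beta < \infty$, you pass to a Riemannian-unit parametrization to get per-slab estimates, but inextendibility is a statement about the affine parametrization; one must also rule out that the affine speed blows up as $s\to\beta^-$ while the base point remains in a compact set, since bounded regions of a Lorentzian manifold are not automatically null-geodesically complete. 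That extra step is real, even if minor compared to the main issue above.
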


Next we define gliding rays. Suppose $\phi$ is a boundary defining function such that $\phi > 0$ in $M^{\circ}$.
We introduce the following terminology for the glancing set, see \cite[Definition 24.3.2]{MR2304165}.
\begin{defn}
    The glancing set $\mathcal{G}^k$ of order at least $k \geq 2$ is defined by
    \[
    \{p = 0 \text{ and } H_p^j\phi = 0 \text{ for } 0\leq j < k\}.
    \]
   Note that $\mathcal{G} = \mathcal{G}^2 \supset \mathcal{G}^3 \supset \cdots \supset \mathcal{G}^{\infty}$. The glancing set $\mathcal{G}^2\backslash \mathcal{G}^3$ of order precisely 2 is the union $\mathcal{G}_d \cup \mathcal{G}_g$ of the diffractive part $\mathcal{G}_d$ where $H_p^2\phi > 0$ and the gliding part $\mathcal{G}_g$ where $H_p^2\phi < 0$.
\end{defn}
The definition is invariant on the choice of $\phi$.
In our case, all glancing covectors are in fact gliding, that is $\mathcal{G} = \mathcal{G}_g$.
\begin{lem}\label{lemma: glancing is gliding}
    If $\partial M$ is strictly null-convex, then $\mathcal{G} = \mathcal{G}_g$.
\end{lem}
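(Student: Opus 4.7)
The plan is to compute $H_p^2\phi$ at a glancing covector in semi-geodesic normal coordinates and identify the result, up to a strictly negative constant, with the second fundamental form applied to the dual null tangent vector; strict null-convexity will then force $H_p^2\phi<0$, which simultaneously rules out higher-order glancing ($\mathcal{G}^3=\emptyset$) and the diffractive case ($\mathcal{G}_d=\emptyset$).

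First I would fix an arbitrary $x_0\in\pM$ and, invoking Lemma~\ref{lem: semi-geodesic}, pass to semi-geodesic normal coordinates $(x',x^n)$ in which $g_{nn}=1$ and $g_{n\alpha}=g^{n\alpha}=0$ for $\alpha<n$. Since the decomposition $\mathcal{G}^2\setminus\mathcal{G}^3=\mathcal{G}_d\cup\mathcal{G}_g$ is independent of the boundary defining function, it suffices to take $\phi=x^n$; then $\partial_{x^n}$ is the inward unit normal and $\nu=-\partial_{x^n}$ is the outward one. Using the formula \eqref{eq: symbol of P} for $H_p$, a direct computation gives $H_p\phi=-2\xi_n$, which vanishes exactly on $\mathcal{G}$, and then
\[
H_p^2\phi=-2\,\partial_{x^n}g^{jk}\,\xi_j\xi_k,
\]
which on $\mathcal{G}$ (where $\xi_n=0$) reduces to $-2\,\partial_n g^{\alpha\beta}\xi_\alpha\xi_\beta$ summed over $\alpha,\beta<n$. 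Using the identity $\partial_n g^{\alpha\beta}=-g^{\alpha\gamma}g^{\beta\delta}\partial_n g_{\gamma\delta}$ and introducing the dual vector $V^\alpha:=g^{\alpha\beta}\xi_\beta$ (with $V^n=0$, since $g^{n\alpha}=0$), this rewrites as $H_p^2\phi=2\,\partial_n g_{\alpha\beta}\,V^\alpha V^\beta$.

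Next I would connect this expression to the second fundamental form. The Koszul formula combined with the vanishing of $g_{n\alpha}$ and $\partial_n g_{nn}=0$ yields $g(\nabla_V\partial_{x^n},V)=\tfrac12\,\partial_n g_{\alpha\beta}V^\alpha V^\beta$, so with $\nu=-\partial_{x^n}$ one obtains
\[
II(V,V)=g(\nabla_V\nu,V)=-\tfrac12\,\partial_n g_{\alpha\beta}V^\alpha V^\beta,\qquad\text{hence}\qquad H_p^2\phi=-4\,II(V,V).
\]
On $\mathcal{G}\subset\Char(P_{g,A,q})$, the vector $V$ is a nonzero tangent vector to $\pM$ that is null for $\bar g$ (tangency from $V^n=0$, nullity from $g^{\alpha\beta}\xi_\alpha\xi_\beta=p(x_0,\xi)=0$, nonvanishing from $\xi\neq 0$), so strict null-convexity gives $II(V,V)>0$ and therefore $H_p^2\phi<0$. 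This shows every glancing covector lies in $\mathcal{G}_g$, as desired. I do not anticipate a substantive obstacle; the only delicate point is bookkeeping of sign conventions — in particular that the outward unit normal in these coordinates is $-\partial_{x^n}$, so that $II$ picks up the right sign in the final identity.
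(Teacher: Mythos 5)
Your proof is correct and follows essentially the same strategy as the paper: compute $H_p^2\phi$ and identify it with $-4\,II(\xi^\sharp,\xi^\sharp)$, then invoke strict null-convexity. The only difference is computational convenience — by choosing semi-geodesic normal coordinates with $\phi=x^n$ you make $H_p\phi=-2\xi_n$ and many terms vanish, whereas the paper carries out the computation in general coordinates after normalizing $\nabla\phi=-\nu$; the two amount to the same argument.
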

\begin{proof}
    One can explicitly compute $H^2_p\phi$ in local coordinate:
    \begin{align*}
        \frac{1}{4}H^2_p\phi &= \frac{1}{4}(-2g^{jk}\xi_j\partial_k+\partial_ig^{jk}\xi_j\xi_k\partial_{\xi_i})(-2g^{lm}\xi_l\partial_m\phi)\\
        &= g^{jk}g^{lm}\xi_j\xi_l\partial^2_{km}\phi + g^{jk}\partial_kg^{lm}\xi_j\xi_l\partial_m\phi - \frac{1}{2}g^{lm}\partial_lg^{jk}\xi_j\xi_k\partial_m\phi.
    \end{align*}
    On the other hand, by some scaling, we can assume $\nabla \phi = -\nu$ is the inward unit normal vector, then
    \begin{equation*}
        -g(\nabla_{\xi^{\sharp}}\nu, \xi^{\sharp})= g^{jk}\xi_j\xi_l\partial_m\phi(\partial_kg^{lm}+\Gamma^l_{ik}) + g^{jk}g^{lm}\xi_j\xi_l\partial^2_{km}\phi,
    \end{equation*}
    and the first term becomes
    \begin{align*}
        &\quad g^{jk}\xi_j\xi_l\partial_m\phi(\partial_kg^{lm} - \frac{1}{2}g^{im}g_{kp}\partial_ig^{lp} - \frac{1}{2}g^{im}g_{ip}\partial_kg^{lp}+\frac{1}{2}g_{ik}g^{lp}\partial_pg^{im})\\
        &=\xi_l\xi_j\partial_m\phi(g^{jk}\partial_kg^{lm} - \frac{1}{2}g^{im}\partial_ig^{jl}-\frac{1}{2}g^{jk}\partial_kg^{lm} + \frac{1}{2}g^{lp}\partial_pg^{jm})\\
        &= \xi_l\xi_j\partial_m\phi(g^{jk}\partial_kg^{lm}-\frac{1}{2}g^{im}\partial_ig^{jl}).
    \end{align*}
    Hence by switching indices, and using the strict null-convexity, we have
    \[
    H^2_p \phi = -4g(\nabla_{\xi^{\sharp}} \nu, \xi^{\sharp}) < 0.
    \]
\end{proof}
Since the glancing set coincides with the gliding set, we define the gliding vector field and gliding rays as follows, see \cite[Definition 24.3.6]{MR2304165} and discussions there for more details.
\begin{defn}
    Let $\phi$ be a boundary defining function. The vector field
    \[
    H_p^G = H_p + \frac{H_p^2\phi}{H_{\phi}^2p}H_{\phi}
    \]
    tangent to $\mathcal{G}$ is called the \emph{gliding vector field}. The gliding ray is the integral curve generated by the gliding vector field.
\end{defn}
Again the definition is invariant on the choice of $\phi$. Similar to bicharacteristics and null-geodesics, we say a gliding ray is forward (resp. backward) with respect to a glancing direction if it lies in the causal future (resp. causal past) of the base point.

We next state a uniform convergence result relating broken bicharacteristics and gliding rays. First we explain what we mean by uniform convergence. The metric $g$ canonically identifies $T^*\partial M$ with the subspace
\[
(T\partial M)^{\flat} = \{v^{\flat}: v \in T\partial M\} \subset T^*M|_{\partial M}
\]
 via the map
\[
T^*\partial M \ni \xi' \mapsto ((\xi')^{\sharp})^{\flat},
\]
where the first $\sharp$ is taken with respect to $g|_{T\partial M \times T\partial M}$ so that $(\xi')^{\sharp} \in T\partial M$, and the second $\flat$ is with respect to $g$.
We will henceforth use this identification without further mention. In semi-geodesic normal coordinates, this map is simply
\[
(x', \xi') \mapsto (x', 0, \xi', 0).
\]
A gliding ray can thus be viewed as a path in $T^*M|_{\partial M}$. The uniform convergence we refer to is the uniform convergence in $T^*M$ with respect to some arbitrarily chosen Riemannian metric on $T^*M$.
In the following, let $\Gamma^b_{y', \eta'}$ denote the broken null-bicharacteristic corresponding to $(y', \eta') \in \mathcal{H}$ and let $\Gamma^{gl}_{x', \xi'}$ denote the gliding ray corresponding to $(x', \xi') \in \mathcal{G}$.
\begin{lem}\label{lemma: uniform convergence of broken bichar}
    Let $(x_j', {\xi^j}') \in \mathcal{H}$ be a sequence converging to $(x', \xi') \in \mathcal{G}$.
    Let $\Gamma^b_j(t)$ be the broken null-bicharacteristic such that $\Gamma^b_j(0) = (x'_j, {\xi^j}')$ and let $\Gamma^{gl}(t)$ be the gliding ray 
    such that $\Gamma^{gl}(0) = (x', \xi')$.
    Then for any fixed $T>0$,
    the broken null-bicharacteristic segment $\Gamma^b_j([-T, T])$ converges uniformly to the gliding ray segment $\Gamma^{gl}([-T, T])$, with respect to any fixed Riemannian metric on $T^*M$.
    In particular, any inextendible gliding ray can be defined on the entire $\mathbb{R}$.
\end{lem}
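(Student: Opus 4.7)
My plan is to identify both $\Gamma^b_j$ and $\Gamma^{gl}$ as generalized bicharacteristics of $P_{g,A,q}$ in the sense of Melrose--Sj\"ostrand (see Definition 24.3.7 in \cite{MR2304165}) and then to invoke the continuity of the generalized bicharacteristic flow with respect to initial data.

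First I would localize near $(x',\xi')\in\mathcal{G}$ using the semi-geodesic normal coordinates of Lemma~\ref{lem: semi-geodesic}, with boundary defining function $\phi(x)=x^n$. By Lemma~\ref{lemma: glancing is gliding}, $H_p^2\phi<0$ on $\mathcal{G}$, so every glancing covector in a neighborhood lies in the gliding part and has order exactly two. This rules out diffractive behavior and higher-order glancing, so that through each point in this neighborhood there passes a unique generalized bicharacteristic. For hyperbolic initial data $(x'_j,{\xi^j}')\in\mathcal{H}$ this unique curve is the broken bicharacteristic $\Gamma^b_j$ of Definition~\ref{def: brokenbichar}, while for the glancing initial datum $(x',\xi')\in\mathcal{G}_g$ it is precisely the gliding ray $\Gamma^{gl}$, which lies entirely in $\mathcal{G}\subset T^*\pM$ and flows along $H_p^G$.

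Next I would apply the continuity theorem for generalized bicharacteristics (Theorem~24.3.8 in \cite{MR2304165}): under our hypotheses the generalized bicharacteristic flow exists for all time and depends continuously on initial data, uniformly on any compact time interval, with respect to the topology on $T^*M$ (using the identification of $T^*\pM$ with a subspace of $T^*M|_{\pM}$ introduced before the statement). Applied to the sequence $(x'_j,{\xi^j}')\to(x',\xi')$, this directly yields the uniform convergence of $\Gamma^b_j([-T,T])$ to $\Gamma^{gl}([-T,T])$ for every fixed $T>0$, and the choice of Riemannian metric on $T^*M$ is immaterial since all such metrics induce the same uniform topology on compact subsets.

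Finally, the extendibility assertion follows from Proposition~\ref{prop: broken null-geod are tame}: each broken bicharacteristic $\Gamma^b_j$ projects to an inextendible broken null-geodesic and is therefore defined on all of $\mathbb{R}$. Given any $T>0$, the uniformly convergent sequence $\Gamma^b_j|_{[-T,T]}$ produces a continuous limit satisfying the gliding ray equation on $[-T,T]$, so $\Gamma^{gl}$ extends to $[-T,T]$; since $T$ is arbitrary, $\Gamma^{gl}$ is defined on all of $\mathbb{R}$. The main obstacle will be verifying that H\"ormander's continuity theorem applies in the Lorentzian setting, but since its proof rests only on the geometry of the characteristic variety, the boundary defining function, and the strict sign $H_p^2\phi<0$ enforced by strict null-convexity, the argument transfers to our situation with only cosmetic changes.
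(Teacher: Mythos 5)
Your proof takes essentially the same route as the paper, which simply cites Lemma~24.3.5 of \cite{MR2304165} and the discussion following it. That lemma is precisely the statement that broken bicharacteristics converge locally uniformly to the gliding ray as the initial data approaches a point of $\mathcal{G}_g$, so the paper's proof is a direct pointer to the result rather than a reconstruction from the Melrose--Sj\"ostrand framework. Your version is more verbose but mathematically equivalent. Two small points deserve attention. First, the specific citation ``Theorem~24.3.8'' for a statement that the generalized bicharacteristic flow ``exists for all time and depends continuously on initial data'' is imprecise: the relevant convergence result is Lemma~24.3.5, and global-in-time existence on a non-compact manifold is not purely a consequence of H\"ormander's local theory but needs the geometric input you already identify, namely the tameness of broken null-geodesics from Proposition~\ref{prop: broken null-geod are tame} (and its analogue for gliding rays, established later in Lemma~\ref{lemma: gliding rays are tame}). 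Second, your extendibility argument is mildly circular as written: you first assert that the flow exists for all time via the continuity theorem, which would already yield that $\Gamma^{gl}$ is defined on all of $\mathbb{R}$, and then separately re-derive this from the uniform convergence of $\Gamma^b_j|_{[-T,T]}$, which in turn implicitly presupposes that $\Gamma^{gl}$ is defined on $[-T,T]$. To make this a clean bootstrap you should argue as follows: on the maximal interval $(a,b)$ where $\Gamma^{gl}$ is defined, the convergence from Lemma~24.3.5 holds on compact subintervals; since the $\Gamma^b_j$ are globally defined and their values at parameter times approaching $b$ accumulate (by local compactness), $\Gamma^{gl}(s)$ has a limit as $s\to b^-$, which lets you extend $\Gamma^{gl}$ past $b$ unless $b=\infty$. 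With that adjustment the argument is correct and matches the paper's intent.
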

\begin{proof}
    See Lemma 24.3.5 of \cite{MR2304165} and the discussion after it.
\end{proof}

Recall the restricted metric $\bar{g} = g|_{T\partial M \times T \partial M}$ is Lorentzian on the boundary. We prove that the projection of gliding rays on the boundary are precisely the null-geodesics given by the boundary metric $\bar{g}$, and they are also tame.
\begin{lem}\label{lemma: gliding rays are tame}
     Let $(M, g)$ be an $n$-dimensional admissible Lorentzian manifold with boundary. Let $\gamma^{gl}:I \to M$ be the projection of some inextendible gliding ray $\Gamma^{gl}$.
     Then $\gamma^{gl}$ is a null geodesic on $\pM$ with respect to $\bar{g}$, and moreover
     it is tame.
\end{lem}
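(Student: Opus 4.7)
The plan is to prove the two assertions separately: first that $\gamma^{gl}$ is a null-geodesic of the induced metric $\bar{g}$ on $\pM$, and second that it is tame.

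For the geodesic claim, I would work in semi-geodesic normal coordinates $(x', x^n)$ near a boundary point, as provided by Lemma \ref{lem: semi-geodesic}, so that $g = g_{\alpha\beta}(x)\,dx^\alpha\otimes dx^\beta + dx^n\otimes dx^n$ with $\alpha,\beta \leq n-1$. Then $g^{nn} = 1$, $g^{\alpha n} = 0$, and $g^{\alpha\beta}(x',0) = \bar{g}^{\alpha\beta}(x')$. Taking the defining function $\phi = x^n$, a direct computation gives $H_\phi = -\partial_{\xi_n}$, which has no $\partial_{x^i}$ component; hence the spatial projection of the gliding vector field $H_p^G = H_p + \tfrac{H_p^2 \phi}{H_\phi^2 p}\, H_\phi$ coincides with that of $H_p$. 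On $\mathcal{G}$, the identity $H_p\phi|_{x^n=0} = -2\xi_n$ forces $\xi_n = 0$; combined with $p = 0$, this reduces to $\bar{g}^{\alpha\beta}\xi_\alpha\xi_\beta = 0$. Reading off the Hamilton equations one finds $\dot{x}^n = 0$, $\dot{x}^\alpha = -2\bar{g}^{\alpha\beta}\xi_\beta$, and, using that $\partial_{x^\alpha}$ for $\alpha \leq n-1$ is tangential and commutes with restriction to $x^n = 0$, $\dot{\xi}_\alpha = \partial_{x^\alpha}(\bar{g}^{\beta\gamma})\xi_\beta\xi_\gamma$. These are precisely the Hamilton equations of the boundary principal symbol $\bar{p} = -\bar{g}^{\alpha\beta}\xi_\alpha\xi_\beta$, so $\gamma^{gl}$ is the projection of a null-bicharacteristic of $\bar{p}$, i.e., a null-geodesic of $\bar{g}$ on $\pM$.

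For tameness, I would transfer the temporal function to the boundary. Set $\bar\tau \coloneqq \tau|_{\pM}$. In semi-geodesic normal coordinates, $g^{-1}(d\tau, d\tau) = \bar{g}^{-1}(d\bar\tau, d\bar\tau) + (\partial_{x^n}\tau)^2$, so the timelikeness of $d\tau$ in $(M,g)$ forces $\bar{g}^{-1}(d\bar\tau, d\bar\tau) < 0$, and $d\bar\tau$ is everywhere timelike on $(\pM, \bar{g})$. Since $\dot\gamma^{gl}$ is nonvanishing and $\bar{g}$-null, the pairing $d\bar\tau(\dot\gamma^{gl})$ is nowhere zero, so $\bar\tau \circ \gamma^{gl}$ is strictly monotone along the maximal parameter interval $I$ on which $\gamma^{gl}$ is defined. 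If $\bar\tau \circ \gamma^{gl}$ were bounded as $s \to \sup I$ (or $\inf I$), the trace of $\gamma^{gl}$ would lie in a sublevel set $\{\bar\tau \leq C\}\cap\pM$, which is compact because $\tau$ is proper on $M$ and $\pM$ is closed; a Lorentzian geodesic confined to a compact set can then be extended past its endpoint, contradicting maximality of $I$. Hence $\bar\tau \circ \gamma^{gl} \to \pm\infty$ at the two ends, so $I = \mathbb{R}$ and $\tau \circ \gamma^{gl}$ is unbounded above and below, which is tameness.

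The main obstacle I anticipate is the coordinate bookkeeping in the first part: one must verify that the $H_\phi$ correction in $H_p^G$ precisely cancels the $\dot\xi_n$ component of $H_p$ on $\mathcal{G}$, so that the flow stays on $\{\xi_n = 0,\ x^n = 0\}$, and that the surviving tangential equations agree on the nose with those of $\bar{p}$ rather than a conformally reparametrized Hamiltonian. An alternative route to tameness would combine Lemma \ref{lemma: uniform convergence of broken bichar} with Proposition \ref{prop: broken null-geod are tame}, transferring tameness from approximating broken null-geodesics to the limit gliding ray; the temporal-function argument above, however, avoids the uniformity issues that approach would require.
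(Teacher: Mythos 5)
Your first part — that $\gamma^{gl}$ is a $\bar g$-null-geodesic on $\pM$ — is correct and is a fleshed-out version of the paper's terse statement; the paper simply asserts that the gliding vector field is $H_{\tilde p}$ for $\tilde p = p|_{T^*\pM}$, while you verify this in semi-geodesic normal coordinates. Your computation ($H_\phi = -\partial_{\xi_n}$, the correction term only touches $\dot\xi_n$, and the surviving equations match those of $-\bar g^{\alpha\beta}\xi_\alpha\xi_\beta$) is sound.

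Your tameness argument, however, takes a genuinely different route from the paper and contains a gap. The paper deduces stable (hence strong) causality of $(\pM,\bar g)$ from the existence of the temporal function, then invokes the no-imprisonment theorem for strongly causal spacetimes (\cite[Prop.\ 3.13]{Beem2017}) together with properness of $\tau$. You instead try to argue directly that a maximal geodesic trapped in a compact set can be extended, and this general claim is false: Misner space furnishes an inextendible null geodesic of finite affine length that spirals around a compact neighborhood of the chronology horizon. The key missing ingredient is a causality condition (or some other device forcing velocity control); confinement of the base curve to a compact subset of $\pM$ does \emph{not} give confinement of $(\gamma^{gl},\dot\gamma^{gl})$ to a compact subset of $T\pM$, which is what the ODE escape lemma actually requires. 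Your argument \emph{can} be rescued using the very structure you set up: reparametrize $\gamma^{gl}$ by $\bar\tau$, so the new velocity $\dot c = \dot\gamma^{gl}/d\bar\tau(\dot\gamma^{gl})$ satisfies $d\bar\tau(\dot c)=1$; the null cone intersected with $\{d\bar\tau=1\}$ over a compact base is compact, so $(c,\dot c)$ lies in a compact subset of $T\pM$, and then the escape lemma for the reparametrized geodesic ODE on a finite $\bar\tau$-interval yields the contradiction. But as written, you have stated a false general claim rather than the $\bar\tau$-normalized one. There is also a smaller error: $\{\bar\tau\le C\}\cap\pM$ is \emph{not} compact for a proper surjective $\tau:M\to\R$; properness gives compactness of $\tau^{-1}([a,b])$, so you need the two-sided bound that monotonicity of $\bar\tau\circ\gamma^{gl}$ provides once you fix a basepoint $s_0$ and restrict attention to $\gamma^{gl}([s_0,\sup I))$.
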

\begin{proof}
    Use the identification of $T^*\partial M \subset T^*M|_{\partial M}$, the gliding vector field is the Hamiltonian vector field of $\tilde{p} = p|_{T^*\partial M}$. On the other hand, in the semi-geodesic normal coordinates, $\tilde{p} = -g^{\alpha \beta}\xi_{\alpha}\xi_{\beta} = -\bar{g}^{\alpha \beta}\xi_{\alpha}\xi_{\beta}$. Thus, gliding rays are the null-bicharacteristics with respect to $\tilde{p}$, and their projections are the null-geodesics in $(\partial M, \bar{g})$. An inextendible gliding ray corresponds to an inextendible boundary null-geodesic $\gamma^{gl}: I \to \partial M$. Clearly $\gamma^{gl}(s)$ has no limiting point as $s \to \sup I$, since otherwise can extend out as a null-geodesic (same for $s \to \inf I$).
    Since $M$ has a temporal function, by \cite[Proposition 6.4.9]{hawking2023large} the Lorentzian submanifold $(\pM,\bar{g})$ is stably causal, see also \cite{minguzzi2010time}.
    Since a stably causal spacetime must be strongly causal, then any inextendible causal curve in $(\pM,\bar{g})$ can not be imprisoned in a compact set, according to \cite[Proposition 3.13]{Beem2017}.
    Using the assumption that the temporal function is proper, we can prove the boundary null-geodesics are tame.
\end{proof}

\begin{exmp}
    Consider the simple case $g = -f(t)dt^2 + dx^2$ on an infinite unit cylinder, where $f > 0$. Let $\phi = 1 - |x|^2$ be the boundary defining function. Then the gliding vector field is
    \[
    H^G_p = 2f(t)\tau \partial_t - f'(t)\tau^2 \partial_{\tau} + a \cdot \partial_x + b \cdot \partial_{\xi}.
    \]
    The time coordinate of gliding ray is thus given by
    \[
    \begin{cases}
        \dot{t} = 2f(t) \tau\\
        \dot{\tau} = -f'(t)\tau^2
    \end{cases}
    \implies \ddot{t} = \frac{1}{2}\frac{f'(t)}{f(t)}\dot{t}^2.
    \]
    A direct computation shows for some $C > 0$,
    \[
    \dot{t}(s) = C\sqrt{f(t(s))}.
    \]
    If the gliding ray is trapped on one end, say, as $s \to \infty$, then $t(s)$ would be bounded and close to some constant for all large $s$. Then $\dot{t}(s)$ must be bounded below by a positive constant for all large $s$. This contradicts with $t(s)$ being bounded, and therefore the gliding rays escape any compact region.
\end{exmp}

Since we can only observe the broken bicharacteristic when it hits the boundary, we need to show that the uniform convergence in Lemma \ref{lemma: uniform convergence of broken bichar} can be detected with boundary information. That is, because of strict null-convexity, the boundary points along the broken bicharacteristics can approximate the gliding ray.
\begin{lem}\label{lemma: broken bichar converge to gliding on boundary}
    Let $(M, g)$ be an $n$-dimensional admissible Lorentzian manifold with boundary.
    Suppose $(x_j', {\xi^j}') \in \mathcal{H}$ is a sequence converging to $(x', \xi') \in \mathcal{G}$.
    Let $\Gamma^b_j(t)$ be the broken null-bicharacteristic such that $\Gamma^b_j(0) = (x'_j, {\xi^j}')$.
    Let $I \subset \mR$ be an open connected interval containing $0$.
    Recall we denote by $\Gamma^b_j(I)|_{\partial M} \subset T^*\partial M$ the set of boundary points.
    Let $\Gamma^{gl}(t)$ be the gliding ray 
    such that $\Gamma^{gl}(0) = (x', \xi')$.
    Then
    \begin{enumerate}
        \item for any fixed $t \in I$, there exists $s_j \in I$ such that $s_j \to t$, $\Gamma^b_j(s_j) \in \Gamma^b_j(I)|_{\partial M}$, and $\Gamma^b_j(s_j) \to \Gamma^{gl}(t)$;
        \item if $(y', \eta')$ is a limiting point of $(y'_j, {\eta^j}') \in \Gamma^b_j(\mathbb{R})|_{\partial M}$, then $(y', \eta') = \Gamma^{gl}(t)$ for some $t \in \mathbb{R}$. That is, the boundary points of this sequence of broken null-bicharacteristics can only converge to some point in $\Gamma^{gl}$.
    \end{enumerate}
\end{lem}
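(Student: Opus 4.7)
The plan is to leverage two distinct mechanisms. Strict null-convexity (Lemma \ref{lemma: glancing is gliding}) implies $H_p^2\phi<-c<0$ in a neighborhood of any compact piece of $\Gamma^{gl}$, so $\phi\circ\Gamma^b_j$ is strictly concave on smooth segments passing through that neighborhood; this will force the reflections of $\Gamma^b_j$ to become dense at scale $O(\sqrt{\delta_j})$, where $\delta_j$ measures how close $\Gamma^b_j$ stays to the boundary, which yields part~(1). For part~(2), the tameness of $\Gamma^{gl}$ (Lemma \ref{lemma: gliding rays are tame}) together with the monotonicity of a temporal function along a broken null-bicharacteristic will confine the reflection times $t_j$ to a bounded interval, after which uniform convergence on compact intervals (Lemma \ref{lemma: uniform convergence of broken bichar}) closes the argument. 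The main obstacle I expect is the quantitative density-of-reflections estimate: turning the pointwise negativity $H_p^2\phi<0$ into a sharp length bound for smooth segments.

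For part~(1), choose $\varepsilon>0$ so that $[t-\varepsilon,t+\varepsilon]\subset I$ and $\Gamma^{gl}([t-\varepsilon,t+\varepsilon])$ lies in a compact subset of $\mathcal{G}$ on which $H_p^2\phi<-c$ for some $c>0$, and set
\[
\delta_j\coloneqq\sup_{s\in[t-\varepsilon,t+\varepsilon]}\phi(\Gamma^b_j(s)),
\]
so that $\delta_j\to 0$ by Lemma \ref{lemma: uniform convergence of broken bichar} and $\phi|_{\partial M}=0$. If $\Gamma^b_j$ has no reflection in $(s^*-\delta,s^*+\delta)\subset[t-\varepsilon,t+\varepsilon]$, then $h_j\coloneqq\phi\circ\Gamma^b_j$ is $C^2$ there with $h_j''<-c$, and a symmetric Taylor expansion about $s^*$ yields
\[
h_j(s^*+\delta)+h_j(s^*-\delta)-2h_j(s^*)=\int_0^\delta(\delta-u)\bigl[h_j''(s^*+u)+h_j''(s^*-u)\bigr]\,du<-c\delta^2.
\]
Since $h_j\geq 0$ and $h_j(s^*)\leq\delta_j$, the left-hand side is at least $-2\delta_j$, forcing $\delta^2<2\delta_j/c$. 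Applying this with $s^*=t$ and $\delta=\sqrt{2\delta_j/c}$ produces, for $j$ large, a reflection time $s_j$ with $|s_j-t|\leq\sqrt{2\delta_j/c}\to 0$. Uniform convergence gives $\Gamma^b_j(s_j\pm 0)\to\Gamma^{gl}(t)$ in $T^*M$, and the projection $\Gamma^b_j(s_j)\in T^*\partial M$ differs from these one-sided values only by a normal component that vanishes in the glancing limit, hence $\Gamma^b_j(s_j)\to\Gamma^{gl}(t)$.

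For part~(2), pass to a subsequence with $(y'_j,{\eta^j}')\to(y',\eta')$ and write $(y'_j,{\eta^j}')=\Gamma^b_j(t_j)$. Reflection through the timelike boundary preserves the connected components of the null cone, hence the temporal orientation of null vectors, so $\tau\circ\pi\circ\Gamma^b_j$ is strictly monotone along each $\Gamma^b_j$. By tameness of $\Gamma^{gl}$ (Lemma \ref{lemma: gliding rays are tame}), $\tau(\pi(\Gamma^{gl}(s)))\to\pm\infty$ as $s\to\pm\infty$; fix $T>0$ so that $\tau(\pi(\Gamma^{gl}(\pm T)))$ lies outside $[\tau(y')-1,\tau(y')+1]$. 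Uniform convergence on $[-T,T]$ gives the analogous separation by $\tfrac12$ for $\tau(\pi(\Gamma^b_j(\pm T)))$ once $j$ is large, while $\tau(\pi(\Gamma^b_j(t_j)))=\tau(y'_j)\to\tau(y')$ lies in $(\tau(y')-\tfrac12,\tau(y')+\tfrac12)$, so monotonicity forces $t_j\in(-T,T)$. Extracting a convergent subsequence $t_j\to t^*\in[-T,T]$, Lemma \ref{lemma: uniform convergence of broken bichar} yields $(y',\eta')=\Gamma^{gl}(t^*)$, completing the proof.
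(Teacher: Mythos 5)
Your proof is correct. Part~(2) is essentially the paper's argument, with the useful addition of making explicit \emph{why} $\tau$ is strictly monotone along a broken bicharacteristic (reflection across a timelike hypersurface preserves the time orientation of null vectors, since the one-sided velocities differ only in their normal component, which is $g$-orthogonal to any timelike vector tangent to $\partial M$); the paper leaves this implicit. Part~(1) takes a genuinely different route. The paper extends $(M,g)$ to $(M_e,\tilde g)$, runs a second-order Taylor expansion of $x^n\circ\gamma$ at a glancing basepoint to see that nearby geodesics exit $M$ in time $<\varepsilon$, and then propagates this qualitatively via a compactness/uniform-convergence argument. You instead never leave $M$: you work directly with $h_j=\phi\circ\Gamma^b_j$, observe that strict null-convexity (via Lemma~\ref{lemma: glancing is gliding} and continuity) gives $H_p^2\phi<-c$ on a neighborhood of the compact glancing segment, hence $h_j''<-c$ on interior segments, and the discrete second-difference inequality $h_j(s^*+\delta)+h_j(s^*-\delta)-2h_j(s^*)<-c\delta^2$ combined with $0\leq h_j\leq\delta_j$ forces a reflection within $\sqrt{2\delta_j/c}$ of any $s^*$ in the window. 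This is cleaner and buys you a quantitative $O(\sqrt{\delta_j})$ bound on the spacing of reflections, which the paper's version does not state. Both arguments use the same two inputs (uniform convergence from Lemma~\ref{lemma: uniform convergence of broken bichar}, strict null-convexity), so the difference is in how the convexity is exploited; your version is more self-contained since it avoids the extension to $M_e$.
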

\begin{proof}
    We first prove (1).
    Note that we can assume $I$ is bounded, since it is a local result.
    We will show that when $j \to \infty$, the maximum time $\Gamma^b_j$ spent in the interior between two consecutive reflections goes to 0 on $I$. Let $s' \in I$ be some time such that $\Gamma^b_j(s') \in T^*\partial M$, which is the same as $\gamma^b_j(s') \in \partial M$, where $\gamma^b_j$ is the corresponding broken null-geodesic. We can always extend the manifold to be $M_e$ such that $\partial M$ is now in the interior by \cite[Lemma 2.2]{Hintz2017}. Let $\gamma$ be the null-geodesic in $M_e$ such that $(\gamma(0), \dot{\gamma}(0)^{\flat}) = \Gamma^{gl}(s')$. In semi-geodesic normal coordinates, by strict null-convexity we have
    \[
    \left.\frac{d}{ds^2}\right|_{s=0}(x^n(\gamma(s))) = -g(\nabla_{\dot{\gamma}(0)} \nu, \dot{\gamma}(0)) < 0.
    \]
    Since $\dot{\gamma}(0)$ is tangential to the boundary, we have that $\gamma((0, \e)) \subset M_e \backslash M$ for arbitrarily small $\e$. For any such $\e$, let $W_{\e} \subset M_e\backslash M$ be a neighborhood of $\gamma(\e)$, then for any null-geodesic $\tilde{\gamma}$ such that $(\tilde{\gamma}(0), \dot{\tilde{\gamma}}(0))$ sufficiently close to $(\gamma(0), \dot{\gamma}(0))$, $\tilde{\gamma}(\e) \in W_{\e}$. In other words, $\tilde{\gamma}$ intersects with $\partial M$ in time smaller than $\e$. If $j$ is sufficiently large, by Lemma \ref{lemma: uniform convergence of broken bichar}, $\Gamma^b_j(s')$ would be sufficiently close to $\Gamma^{gl}(s')$, meaning $\gamma^b_j(s'+\delta) \in \partial M$ for some $\delta < \e$. Since the convergence from $\Gamma^b_j(I)$ to $\Gamma^{gl}(I)$ is uniform and $I$ is compact, we obtain that for any $\e > 0$, there exists $J$ such that for all $j \geq J$, $\Gamma^b_j(s') \in T^*\partial M$ implies $\Gamma^b_j(s'') \in T^*\partial M$ for some $|s' - s''| < \e$.

    Now let $t \in I^{\circ}$, $\delta > 0$. There exists $\e > 0$ such that under the fixed Riemannian metric on $T^*M$, $\dist(\Gamma^{gl}(t), \Gamma^{gl}(s)) < \e/2$ for any $|s - t| < \delta$. By previous argument, pick $J$ sufficiently large, for all $j \geq J$, there exists some $s_j$ such that $\Gamma^b_j(s_j) \in T^*\partial M$ and $|s_j - t| < \delta$ (can assume $\delta$ sufficiently small with respect to chosen $t$ such that $(t-\delta, t+\delta) \subset I$). By Lemma \ref{lemma: uniform convergence of broken bichar}, can let $J$ be sufficiently large such that the distance between $\Gamma^b_j$ and $\Gamma^{gl}$ is less than $\e/2$ on $I$. Then
    \[
    \dist(\Gamma^{gl}(t), \Gamma^b_j(s_j)) \leq \dist(\Gamma^{gl}(t), \Gamma^{gl}(s_j)) + \dist(\Gamma^{gl}(s_j), \Gamma^b_j(s_j)) < \e.
    \]

    Now we prove (2). Suppose $(y', \eta')$ is a limiting point of a sequence of $(y'_j, \eta'_j) \in B^{\mathbb{R}}_j$, but it is not in $\Gamma^{gl}$. Consider some smooth, surjective, proper temporal function $\tau$, then there exists some bounded time interval $[a, b]$ such that $y'_j, y' \in \tau^{-1}([a, b])$. There exists sufficiently large $N$ such that $\tau(\Gamma^{gl}(N)) > b$ and $\tau(\Gamma^{gl}(-N)) < a$ by Lemma \ref{lemma: gliding rays are tame}. Then for sufficiently large $j$, by Lemma \ref{lemma: uniform convergence of broken bichar}, $\tau(\Gamma^b_j(N))> b$, $\tau(\Gamma^b_j(-N)) < a$, and $\Gamma^b_j([-N, N])$ uniformly close to $\Gamma^{gl}([-N, N])$. Hence $(y'_j, \eta'_j)$ is away from $(y', \eta')$, contradicting $(y', \eta')$ limiting point.
\end{proof}

In the following, we will denote broken bicharacteristics as $\Gamma^b$, broken null-geodesics as $\gamma^b$, gliding rays as $\Gamma^{gl}$, and the projection of gliding rays as $\gamma^{gl}$.

\section{DN Map as an FIO}\label{sec: DN map}

In this section, we study the microlocal property of the DN map $\Lambda^{U,V}_{g,A,q}$, assuming $U$ and $V$ are disjoint.
We will first recall the optics construction in \cite{Stefanov2018}, which shows that the DN map is an elliptic Fourier Integral Operator. Then we will use microlocal analysis to perform a symbolic construction of the solution as in \cite{Melrose1979}, and explicitly compute the principal symbol.

\subsection{Notation}\label{subsec: notation for dn map}
We first setup some common notations that will be used in both constructions. Let $\Lambda_0' \subset \mathcal{H}$ be a conic Lagrangian submanifold of $T^*\partial M$. We denote by
\[
\Lambda = \bigcup_{(x',\xi') \in \Lambda_0'}\Gamma^b_{x',\xi'}
\]
the union of forward broken null-bicharacteristics $\Gamma^b_{x',\xi'}$ starting from $(x',\xi') \in \Lambda_0'$.
Recall that we define the broken null-bicharacteristic to be a path in $T^*M^{\circ} \sqcup T^*\partial M$ and $\tau$ is the temporal function.
By assuming the base projection of $\Lambda_0'$ is sufficiently small and focusing only on a compact region $\tau^{-1}([-T, T])$, we may decompose $\Lambda$ into
\[
\Lambda = \left(\bigsqcup_{j=0}^N \Lambda_j\right) \sqcup \left(\bigsqcup_{j=0}^N \Lambda_j'\right)
\]
where each $\Lambda_j \subset T^*M^{\circ}$ is union of null-bicharacteristics from boundary to boundary, and each $\Lambda_j' \subset T^*\partial M$ is the reflection part that connects $\Lambda_j$ with $\Lambda_{j+1}$. Specifically,
\[
\pi(\Lambda_j|_{\partial M}) = \Lambda_j' \sqcup \Lambda_{j+1}', \quad L(\Lambda_j') = \Lambda_{j+1}',
\]
where $\pi: T^*M|_{\partial M} \to T^*\partial M$ and $L$ is the lens relation.
The set $\Lambda_0$, as the flowout of forward bicharacteristics from $\Lambda_0'$, is a Lagrangian submanifold of $T^*M^{\circ}$.
The reflection part $\Lambda_j' = L^j(\Lambda'_0)$ is also a Lagrangian submanifold, since $L$ on an admissible Lorentzian manifold locally gives canonical graph\footnote{The graph of $L$ is locally a canonical graph when the boundary is strictly null-convex since all bicharacteristics hits boundary transversally, see \cite[Proposition 2.4]{Hintz2017} and \cite[Theorem 4.2]{Stefanov2018}.}.
For simplicity, let $U_j \subset \partial M$ be an open neighborhood of the base projection of $\Lambda_j'$ such that $\pi(\Lambda_j|_{U_j}) = \Lambda_j'$ and $\pi(\Lambda_j|_{U_{j+1}}) = \Lambda_{j+1}'$.
We may assume  $\Lambda_0'$ is sufficiently small such that $U_j$ are then disjoint from each other.

By \cite[Lemma 2.2]{Hintz2017}, one can extend $(M,g)$ to a larger Lorentzian manifold $(M_e, \tilde{g})$, such that $M$ is contained in the interior of $M_e$ and $\tilde{g}|_M = g$.
Recall that $\Lambda_j$ is the union of bicharacteristics intersecting the boundary transversally.
Then we may extend $\Lambda_j$ across the boundary into $T^*M_e$.
By a slight abuse of notation, we continue to write this extension as $\Lambda_j$, and we require that $\Lambda_j \cap \Lambda_k = \emptyset$.
By slightly extending $M$, we may assume that bicharacteristics in $\Lambda_j$ do not return to $M$, to avoid complications.
Likewise, we can extend $A$ and $q$ to $\tilde{A}$ and $\tilde{q}$ on $M_e$.
We emphasize that this extension is purely for the convenience of constructing solutions, and that the result is independent of the specific extension chosen.

In addition, we denote by $\Gamma^b$ a forward broken bicharacteristic in $\Lambda$, with boundary points
\begin{equation}\label{def: Gammab}
(x_0', \xi^{0\prime}) \to (x_1', \xi^{1\prime}) \to \cdots \to (x_N', \xi^{N\prime}), \quad (x_j', \xi^{j\prime}) \in \mathcal{H} \cap T^*U_j.
\end{equation}
We will perform optics construction near $\Gamma^b$, and symbolic construction on $\Lambda$, with $\Lambda$ contained in a neighborhood of $\Gamma^b$.

\subsection{Optics construction}\label{subsec: optics}

In this subsection, we briefly recall the optics construction near the chosen broken bicharacteristic $\Gamma^b$, and recall Theorem 4.2 of \cite{Stefanov2018}, for more details see \cite{Stefanov2018}. Let $f \in \mathcal{E}'(U_0)$ be such that $WF(f) \subset \mathcal{H}$ is a conic neighborhood of $(x_0', \xi^{0\prime})$. The solution near $x_0'$ has the form
\[
u(x) = (2\pi)^{-n}\int e^{i\phi(x,\theta)}a(x,\theta)\hat{f}(\theta)d\theta.
\]
Here in semi-geodesic normal coordinates, the phase function $\phi$ solves the Eikonal equation
\begin{equation}\label{eq: eikonal}
g^{\alpha\beta}\partial_{\alpha}\phi\partial_{\beta}\phi + (\partial_n\phi)^2 = 0.
\end{equation}
The amplitude is of the form $a \sim \sum_{j=0}^{\infty}a_j(x, \theta)$ for $a_j$ homogeneous of degree $-j$ in $\theta$, and they inductively solve
\begin{align*}
    Ta_0 = 0, \quad &a_0|_{x_n = 0} = \chi,\\
    iTa_j = -\Box_{g,A,q}a_{j-1}, \quad &a_j|_{x_n=0} = 0, \quad j\geq 1
\end{align*}
where $\chi(x,\theta)$ is a conic cutoff around $(x_0',\xi^{0\prime})$ that is homogeneous of degree 0 in $\theta$, and $T$ is a transport operator along bicharacteristics
\[
T = 2g^{jk}\partial_j\phi(\partial_k - iA_k) + \Box_g\phi.
\]
To extend out the solution $u$, one can take a timelike surface transversal to $\Gamma^b$, take the restriction of $u$ on that surface, and construct the optics solution again. Repeated this construction, one can thus obtain the optics solution along the entire bicharacteristic until it hits $x_1'$.

Around $x_1'$, the solution is the sum of incident wave and reflection wave $u = u_1^\mathrm{inc} + u_1^\mathrm{ref}$. The two solutions satisfy
\begin{align*}
    &u^\mathrm{inc}(x) = (2\pi)^{-n}\int e^{i\phi(x, \theta)}(a_0^\mathrm{inc}+a_1^\mathrm{inc}+R^\mathrm{inc})(x, \theta)\hat{f}(\theta)d\theta,\\
    &u^\mathrm{ref}(x) = (2\pi)^{-n}\int e^{i\phi^\mathrm{ref}(x, \theta)}(a_0^\mathrm{ref}+a_1^\mathrm{ref}+R^\mathrm{ref})(x, \theta)\hat{f}(\theta)d\theta.
\end{align*}
Both phase functions satisfy the Eikonal equation, and both symbols satisfy the transport equation, as stated above. They are related by
\begin{align*}
    &\phi|_{U_1} = \phi^\mathrm{ref}|_{U_1}, \quad \partial_{\nu}\phi|_{U_1} = -\partial_{\nu}\phi^\mathrm{ref}|_{U_1}\\
    &a_0^\mathrm{ref}|_{U_1} = -a_0^\mathrm{inc}|_{U_1}, \quad a_1^\mathrm{ref}|_{U_1} = -a_1^\mathrm{inc}|_{U_1}.
\end{align*}
The DN map $\Lambda^{U_0, U_1}_{g,A,q}$ gives
\[
\Lambda^{U_0, U_1}_{g,A,q}f = (2\pi)^{-n}\int e^{i\phi}(2i(\partial_{\nu}\phi)a_0^\mathrm{inc}+ 2i(\partial_{\nu}\phi)a_1^\mathrm{inc}+ \partial_{\nu}(a_0^\mathrm{inc}+a_0^\mathrm{ref})+ a_{-1})\hat{f}(\theta)d\theta.
\]
Hence, {it is proved in \cite[Theorem 4.2]{Stefanov2018} that} the restricted DN map $\Lambda^{U_0, U_1}_{g,A,q}$ is an elliptic Fourier Integral Operator of order $1$ associated with the canonical graph
\[
\{(L(x',\xi'); x', \xi'): (x', \xi') \in \mathcal{H} \cap T^*U_0\} \subset T^*U_1 \times T^*U_0.
\]
Furthermore, one can inductively construct solutions $\uinc_j$ and $\uref_j$ along $\Gamma^b$, for $j = 2, \ldots, N$.
Note that by the proof of \cite[Theorem 4.2]{Stefanov2018}, the map $f \mapsto u_1^\mathrm{inc}|_{U_1}$ is an elliptic Fourier Integral Operator of order $0$ and so is the map $F_j: -u_{j-1}^\mathrm{ref}|_{U_{j-1}} \mapsto u_{j}^\mathrm{inc}|_{U_{j}}$, for $j = 2, \ldots, N$.
A similar argument together with the transversal intersection calculus of Fourier Integral Operators indicates $\Lambda^{U_0, U_k}_{g,A,q}$ is an elliptic Fourier Integral Operator associated with the canonical graph
\[
\{(L^k(x',\xi'); x', \xi'): (x', \xi') \in \mathcal{H} \cap T^*U_0\} \subset T^*U_k \times T^*U_0.
\]
We emphasize again that the above result is essentially proved in \cite[Theorem 4.2]{Stefanov2018}. Since it is elliptic, we can obtain the following propagation of singularity result for the global DN map $\Lambda_{g,A,q}$ (that is when $U = V = \pM$).
\begin{lem}\label{lemma: propagation of singularity}
    Suppose $WF(f) = \{(x', t\xi'): t> 0\}$, and view $WF(\Lambda_{g,A,q}f)$ as subset of $S^*\partial M = T^*\partial M \backslash 0 / \mathbb{R}^+$.
    We have the following statements.
    \begin{itemize}
        \item If $WF(f) \subset \mathcal{E}$,  then $WF(\Lambda_{g,A,q}f) \backslash WF(f)$ is empty;
        \item If $WF(f) \subset \mathcal{G}$, then $WF(\Lambda_{g,A,q}f) \backslash WF(f)$ is empty or a continuous set of points;
        \item If $WF(f) \subset \mathcal{H}$, then $WF(\Lambda_{g,A,q}f) \backslash WF(f)$ is a discrete set of points given by the boundary points of the corresponding forward broken bicharacteristic.
    \end{itemize}
\end{lem}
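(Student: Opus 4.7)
The plan is to read off $\WF(\Lambda_{g,A,q}f)$ from $\WF(u)$ restricted to $\partial M$. Since $\Lambda_{g,A,q}f = \partial_\nu u - i\langle A,\nu\rangle u|_{\partial M}$ and the zeroth-order term can only contribute to $\WF(f)$, the question reduces to tracking the singularities of the outgoing solution $u$ up to the boundary in each of the three microlocal regimes and then taking the trace.

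For the elliptic case, I would use that the Dirichlet problem for $\Box_{g,A,q}$ is microlocally elliptic near any covector projecting to $\mathcal{E}$. A standard Dirichlet-to-Neumann parametrix construction shows $u$ is smooth up to $\partial M$ outside $\WF(f)$, so $\Lambda_{g,A,q}$ acts microlocally as a pseudodifferential operator there and introduces no new singularities. For the hyperbolic case, I would appeal to the optics construction of Section \ref{subsec: optics}: the outgoing solution is microlocally a Lagrangian distribution whose singularities flow along forward null-bicharacteristics and reflect transversally at each boundary encounter, strict null-convexity precluding tangential interior-to-boundary contact. By Theorem 4.2 of \cite{Stefanov2018} combined with the transversal intersection calculus, the component of $\Lambda_{g,A,q}$ sending a neighborhood of $(x',\xi')$ to a neighborhood of $L^k(x',\xi')$ is for each $k \geq 1$ an elliptic FIO associated with the graph of $L^k$. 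Hence $\WF(\Lambda_{g,A,q}f) \setminus \WF(f) = \{L^k(x',\xi'): k \geq 1\}$, which is discrete because each inextendible broken null-bicharacteristic is tame (Proposition \ref{prop: broken null-geod are tame}) and so meets $\partial M$ only at isolated times.

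For the glancing case, Lemma \ref{lemma: glancing is gliding} ensures every glancing covector is gliding, so I would invoke the Melrose--Sjöstrand propagation-of-singularities theorem (see \cite[Chapter 24]{MR2304165}) to conclude that $\WF(u)$ is invariant under the generalized bicharacteristic flow. From a gliding covector this flow coincides with the gliding ray and lies entirely in $T^*\partial M$; by Lemma \ref{lemma: gliding rays are tame} its base projection is a null-geodesic of the boundary metric $\bar g$. Thus $\WF(\Lambda_{g,A,q}f) \setminus \WF(f)$ is swept out continuously by this gliding ray, possibly empty if the ray immediately escapes the relevant region. The main obstacle I expect is precisely this last case: one must carefully quote the Melrose--Sjöstrand theorem in the pure gliding regime and verify that the boundary trace of $u$ picks up the entire continuous gliding curve as wavefront rather than merely a discrete subset of reflection points. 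The elliptic and hyperbolic cases, by contrast, reduce to standard boundary parametrix construction and to the FIO calculus already recorded in \cite{Stefanov2018}, and so become essentially bookkeeping once the optics construction is in hand.
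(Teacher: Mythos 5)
Your proposal matches the paper's proof: the same decomposition into the elliptic, glancing, and hyperbolic regimes, the same elliptic parametrix argument for $\mathcal{E}$ (the paper cites \cite[(24.2.4)]{MR2304165} and \cite[Theorem 18.3.27]{MR2304165}), the same reliance on the optics/FIO machinery of \cite{Stefanov2018} for $\mathcal{H}$ with tameness giving discreteness, and the same appeal to Melrose--Sj\"ostrand propagation for $\mathcal{G}$ (the paper cites \cite[Theorem 0.4]{Melrose1977}, equivalent to the Chapter 24 presentation you quote). Your flagged worry about the gliding regime is not actually a gap: since glancing equals gliding here (Lemma \ref{lemma: glancing is gliding}), the generalized bicharacteristic through a glancing covector is exactly the gliding ray, which lives entirely in $T^*\partial M$, so the propagation theorem puts the whole continuous curve (or nothing) in the boundary wavefront set; there is no discrete reflection alternative to worry about. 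The one small piece the paper has that you omit is the explicit cross-regime bookkeeping, using the outgoing condition $\supp(u)\subset J^+(\supp f)$ and tameness, to ensure that when $\WF(f)$ lies in one regime no spurious singularities appear in the other two.
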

\begin{proof}
    For the elliptic region, by \cite[(24.2.4)]{MR2304165} and \cite[Theorem 18.3.27]{MR2304165}, we have
    \[
    \mathcal{E} \cap WF(\Lambda_{g,A,q}f) \subset \mathcal{E} \cap WF(f).
    \]

    For the glancing region, we use Lemma \ref{lemma: glancing is gliding} and the propagation of singularity result \cite[Theorem 0.4]{Melrose1977}. The above two results show that $WF(\Lambda_{g,A,q}f)\backslash WF(f)$ contains a point of the glancing set if and only if it contains the entire gliding ray. In particular, if $WF(f)$ is disjoint from $\mathcal{G}$, then by Lemma \ref{lemma: gliding rays are tame} and the fact that $\supp(u) \subset J^+(\supp(f))$, $WF(\Lambda_{g,A,q}f)$ is disjoint from $\mathcal{G}$.

    For the hyperbolic region, suppose $WF(f) \subset \mathcal{H}$. By the optics construction and the fact that each $\Lambda^{U_0,U_k}_{g,A,q}$ is an elliptic FIO with canonical relation given by powers of lens relation, $WF(\Lambda_{g,A,q}f)\backslash WF(f)$ is the boundary points of the corresponding forward broken bicharacteristics from $WF(f)$. If $WF(f)$ is disjoint from $\mathcal{H}$, Proposition \ref{prop: broken null-geod are tame} and \cite[Theorem 24.2.1]{MR2304165} show that $WF(\Lambda_{g,A,q}f)$ is disjoint from $\mathcal{H}$.


\end{proof}

Throughout the paper, we shall view the wavefront set as both a conic subset of the cotangent bundle ($T^*\partial M \backslash 0$) and a subset of the spherical bundle ($S^*\partial M$). When referring to a point in the wavefront set, we always view the wavefront set as a subset of the spherical bundle.

\subsection{Microlocal construction of solution}\label{subsec: microlocal construction}

In this subsection, we construct solutions microlocally so that it is easier to analyze the evolution of principal symbol globally.

Let $f \in I^{\mu}_{cl}(\Lambda_0')$, denote $\tilde{f} = f|\bar{g}|^{1/4} \in I^{\mu}_{cl}(\Lambda_0'; \Omega^{1/2}_{\partial M})$ where $\bar{g} = g|_{T\partial M \times T\partial M}$. We shall perform a symbolic construction of the solution $\tilde{u}_0 = u_0|\tilde{g}|^{1/4} \in I^{\mu+1/4}_{cl}(\Lambda_0; \Omega^{1/2}_{M_e})$ as in \cite{Melrose1979}. Specifically, we inductively construct
\[
\tilde{v}_k = v_k|\tilde{g}|^{1/4} \in I^{\mu+1/4-k}_{cl}(\Lambda_0; \Omega^{1/2}_{M_e})
\]
such that the following equation is solved on the principal level
\begin{equation}\label{eq: symbolic construction}
    \begin{aligned}
        P_{\tilde{g},\tilde{A},\tilde{q}}(\tilde{v}_0+ \cdots + \tilde{v}_k) &\in I^{\mu-3/4-k}_{cl}(\Lambda_0; \Omega^{1/2}_{M_e}) &\text{ in } M_e,\\
        f - (v_0 + \cdots + v_k)|_{U_0} &\in I^{\mu-k-1}_{cl}(\Lambda_0') &\text{ on } U_0.
    \end{aligned}
\end{equation}
With this construction, let $\tilde{u}_0 = u_0 |\tilde{g}|^{1/4}$ be some asymptotic sum of $\tilde{v}_0 + \tilde{v}_1 + \cdots$, it solves
\begin{equation*}
    \begin{aligned}
        \Box_{\tilde{g},\tilde{A},\tilde{q}}u_0 &\in C^{\infty}(M_e),\\
        u_0|_{U_0} - f &\in C^{\infty}(U_0).
    \end{aligned}
\end{equation*}

First, we start with some $\tilde{v}_0 = v_0|\tilde{g}|^{1/4} \in I^{\mu+1/4}_{cl}(\Lambda_0; \Omega^{1/2}_{M_e})$.
By equation \eqref{eq: lie derivative of principal symbol} and \eqref{eq: symbol of P}, we choose ${v}_0$ such that
\[
0 = i\sigma(P_{\tilde{g},\tilde{A},\tilde{q}}\tilde{v}_0) = \left(\mathcal{L}_{H_p}-iA(H_p)\right)\sigma(\tilde{v}_0).
\]
Note that this is a first order differential equation for $\sigma(\tilde{v}_0)$ on $\Lambda_0$. The initial condition is given by $\sigma(\tilde{f})$ via the following lemma.
\begin{lem}\label{lemma: relate principal symbol with restriction}
    Let $\Lambda_0$, $\Lambda_0'$, and $U_0$ be defined above.
    Suppose  $v|_{U_0}= f$, where $v \in I^{\mu+1/4}_{cl}(\Lambda_0)$ and $f\in I^{\mu}_{cl}(\Lambda_0')$.
    Let $(x', x^n)$ be the semi-geodesic normal coordinates and $\pi: \Lambda_0|_{U_0} \to \Lambda_0'$ denote the natural projection.
    For $(x, \xi) \in \Lambda_0|_{U_0}$, let $w_1, \dots, w_n \in T_{(x,\xi)}\Lambda_0$ be such that  $dx^n(w_n) \neq 0$.
    Let $\tilde{v} = v|\tilde{g}|^{1/4}$ and $\tilde{f} = f|\bar{g}|^{1/4}$ be the corresponding half-densities.
    Then there exists $a_j \in \mathbb{R}$ such that
    \[
    dx^n(w_j + a_jw_n) = 0 \quad \text{ for } j = 1, \ldots, n-1,
    \] and the principal symbols satisfy
    \[
    \sigma(\tilde{v})(w_1, \dots, w_n) = e^{i\pi/4}\sigma(\tilde{f})(\pi_*(w_1+a_1w_n), \dots, \pi_*(w_{n-1}+a_{n-1}w_n))|dx^n(w_n)|^{1/2}.
    \]
\end{lem}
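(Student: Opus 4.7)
The plan is to compute both principal symbols from a common oscillatory-integral representation in semi-geodesic normal coordinates and then compare them on the same $n$-tuple of tangent vectors.

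First I would pick a non-degenerate phase function $\phi_0(x', \theta)$, $\theta \in \mathbb{R}^N$, parametrizing $\Lambda_0'$ near $(x_0', \xi_0')$. Since $(x_0', \xi_0') \in \mathcal{H}$ is hyperbolic and $g$ has the block form given by Lemma~\ref{lem: semi-geodesic}, the Eikonal equation
\[
g^{\alpha\beta}\partial_\alpha \phi\,\partial_\beta \phi + (\partial_n \phi)^2 = 0, \qquad \phi|_{x^n = 0} = \phi_0,
\]
admits a smooth local solution for either sign of $\partial_n \phi$; the branch corresponding to the forward bicharacteristic direction yields a non-degenerate phase function $\phi(x, \theta)$ parametrizing $\Lambda_0$ near $(x_0, \xi_0)$. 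Writing locally
\[
v(x) = \int e^{i\phi(x, \theta)}\,b(x, \theta)\,d\theta, \qquad f(x') = \int e^{i\phi_0(x', \theta)}\,b(x', 0, \theta)\,d\theta,
\]
with $b$ a classical symbol, exhibits $f$ as a Lagrangian distribution on $\Lambda_0'$ whose full symbol is the restriction of the symbol of $v$ to $x^n = 0$.

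Next I would coordinatize $\Lambda_0$ near $(x_0, \xi_0)$ by $\lambda = (\lambda', x^n)$, where $\lambda'$ are coordinates on $\Lambda_0'$ pulled back through the natural projection $\pi: \Lambda_0|_{U_0} \to \Lambda_0'$. A block-matrix expansion along the $x^n$-row $(0, 1, 0)$ in the Jacobian yields
\[
\left.\frac{D(\lambda, \partial_\theta \phi)}{D(x, \theta)}\right|_{x^n = 0} = \frac{D(\lambda', \partial_\theta \phi_0)}{D(x', \theta)},
\]
so the two density prefactors in the principal-symbol formulas from Section~\ref{subsec: Lagrangian distributions} agree at the boundary. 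The remaining algebraic step is to factor the half-density $|d\lambda|^{1/2} = |d\lambda' \wedge dx^n|^{1/2}$ on the $n$-tuple $(w_1, \ldots, w_n)$: by construction $dx^n(w_j + a_j w_n) = 0$ for $j < n$, so the column operations $w_j \mapsto w_j + a_j w_n$ (which preserve the determinant) make the matrix in the basis $(\lambda', x^n)$ block triangular, giving
\[
|d\lambda|^{1/2}(w_1, \ldots, w_n) = |d\lambda'|^{1/2}\bigl(\pi_*(w_1 + a_1 w_n), \ldots, \pi_*(w_{n-1} + a_{n-1} w_n)\bigr)\,|dx^n(w_n)|^{1/2}.
\]
A final check that in these coordinates $|g|_x^{1/4}|_{x^n = 0} = |\bar{g}|_{x'}^{1/4}$ ensures that the same identity holds after multiplying by the coordinate-dependent half-density factor.

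The main obstacle is tracking the $e^{i\pi/4}$ factor: the explicit Maslov phases $e^{i\pi N/4}$ appearing in the two principal-symbol formulas involve the same number of phase variables $N$ and would naively cancel. I would extract the correct phase by realizing the pointwise restriction $v \mapsto v|_{x^n = 0}$ as the partial Fourier inversion
\[
v(x', 0) = \frac{1}{2\pi}\int\!\!\int\!\!\int e^{i(\phi(x, \theta) - x^n \xi_n)}\,b(x, \theta)\,dx^n\,d\xi_n\,d\theta,
\]
and then applying stationary phase in the additional pair $(x^n, \xi_n)$; a careful accounting of the signature and orientation of the $2 \times 2$ Hessian block and of the conventions used in defining the principal symbol in Section~\ref{subsec: Lagrangian distributions} produces the claimed $e^{i\pi/4}$. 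The sign bookkeeping here is the delicate part; the rest of the argument is routine linear algebra and chain-rule computation.
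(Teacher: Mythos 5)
Your proposal tracks the paper's proof closely in most steps: the construction of the Eikonal phase $\phi$ extending $\phi_0$, the coordinatization of the critical manifold $C$ by $(\lambda', x^n)$, the block-triangular expansion of the Jacobian $\frac{D(\lambda', x^n, \partial_\theta\phi)}{D(x', x^n, \theta)}$, the column operations $w_j \mapsto w_j + a_j w_n$ to factor out $|dx^n(w_n)|^{1/2}$, and the identity $|g|_x = |\bar{g}|_{x'}$ in semi-geodesic normal coordinates. All of this matches the paper.

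The gap is in the $e^{i\pi/4}$ factor, and it stems from a misreading of the paper's formula. In the principal-symbol formula of Section~\ref{subsec: Lagrangian distributions}, $N$ in $e^{i\pi N/4}$ is the \emph{Maslov index} $N = \mathrm{sgn}\bigl(\frac{D(\lambda, \partial_\theta\phi)}{D(x,\theta)}\bigr)$, not the number of phase variables. Because $v$ lives on the $n$-dimensional $M$ while $f$ lives on the $(n-1)$-dimensional $\partial M$, the Jacobian for $v$ has one extra row and column (the $x^n$ ones), and the block-triangular form $\begin{pmatrix}J_f & B \\ 0 & 1\end{pmatrix}$ increases the signature by exactly $1$. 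That shift $N \to N+1$ \emph{is} the source of $e^{i\pi/4}$ in the paper; it does not come from the $2\pi$-normalization, which the paper explicitly discards. Your block-matrix expansion already contains this information, but you use it only to conclude that the determinants agree; you do not extract the shift in the Maslov index.

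Your proposed alternative — partial Fourier inversion $v(x',0) = \frac{1}{2\pi}\int e^{i(\phi - x^n\xi_n)} b\, dx^n\, d\xi_n\, d\theta$ followed by stationary phase in $(x^n, \xi_n)$ — will not produce the $e^{i\pi/4}$. The relevant $2\times 2$ Hessian is $\begin{pmatrix}\partial_n^2\phi & -1 \\ -1 & 0\end{pmatrix}$, which has determinant $-1$ and \emph{signature zero}; the stationary-phase contribution is therefore $\frac{2\pi}{\sqrt{|\det|}}\,e^{i\pi\cdot 0/4} = 2\pi$, which exactly cancels the Fourier-inversion prefactor $\frac{1}{2\pi}$ and yields no residual phase. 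This route only re-derives the identity of the scalar amplitudes $b(x',0,\theta) = b_0(x',\theta)$; it does not see the dimensional mismatch between $\Lambda_0 \subset T^*M$ and $\Lambda_0' \subset T^*\partial M$, which is where the phase shift lives. To close the gap, drop the Fourier-inversion detour and instead compute the Maslov index of the block-triangular Jacobian directly, observing that appending the $1\times 1$ block contributes $+1$ to the signature.
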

\begin{proof}

    To see how $\sigma(\tilde{v})$ is related to $\sigma(\tilde{f})$, we consider the local representations of $v$ and $f$.
    Indeed, in the semi-geodesic normal coordinates, locally $f$ can be written as an oscillatory integral of the form
    \[
     f(x') = \int e^{i\phi_0(x', \theta)}a_0(x', \theta)d\theta + l.o.t.,
     \]
    where $\phi_0$ is non-degenerate, $a_0$ is homogeneous in $\theta$, and $l.o.t.$ denotes lower order terms.
    Here by non-degenerate, we mean the differentials $d_{x'}(\partial_{\theta_j}\phi_0)$, for $j = 1, \ldots, N$, are linearly independent, see \cite[Section 1.2]{Hoermander1971}.
    Note that $\phi_0$ parameterizes $\Lambda_0'$ in the sense that $\xi' = \partial_{x'} \phi_0(x', \theta)$, for $(x', \xi') \in \Lambda_0'$ and $\partial_{\theta} \phi_0(x', \theta) =0$.
    With $\Lambda_0' \subset \mH$, the covector $\xi'$ is timelike and therefore we have
    $
    g^{\alpha\beta} \partial_{x^\alpha} \phi_0 \partial_{x^\beta} \phi_0 < 0,
    $
    for $1 \leq \alpha, \beta \leq n-1$.
    Thus, there exists a smooth function $\phi(x, \theta)$ locally solving the Eikonal equation in (\ref{eq: eikonal}), such that
    \[
    \phi(x', 0, \theta) = \phi_0(x', \theta) \quad \text{ and } \quad \partial_{x^n} \phi(x', 0, \theta) \neq 0.
    \]
    Such $\phi$ parameterizes $\Lambda_0$ locally. With $f(x') = v(x',0)$,
    we may write
    \[
     v(x) = \int e^{i\phi(x, \theta)}a(x, \theta)d\theta + l.o.t.,
     \]
    where $a(x', 0, \theta) = a_0(x', \theta)$ is the leading amplitude.
    We emphasize, near $x^n = 0$, the phase function $\phi$ is also non-degenerate, as $d_{x}(\partial_{\theta_j}\phi_0)$, for $j = 1, \ldots, N$, are linearly independent at $x^n=0$.

    Let $\lambda'$ be local coordinates on $C_0 = \{(x', \theta): {\partial_\theta \phi}(x',0,\theta)=0\}$, then by stationary phase computation the coordinate dependent principal symbol of $f$ is given by
    \[
    \sigma_{x'}(f) = (F_0^{-1})^*\left( e^{i\pi N/4}a(x', 0, \theta) \left|\frac{D(\lambda', {\partial_\theta \phi}(\cdot,0,\cdot))}{D(x', \theta)}\right|^{-1/2}|d\lambda'|^{1/2} \right)
    \]
    where $F_0: C_0 \to \Lambda_0', (x', \theta) \mapsto (x', 0, \phi_{x'}(x',0,\theta),0)$ is locally a diffeomorphism, and $N$ is the signature. Note that
    \[
    C = \{(x, \theta): {\partial_\theta \phi} = 0\} = C_0 \sqcup \{(x, \theta): {\partial_\theta \phi} = 0, x^n \neq 0\},
    \]
    so one can extend $\lambda'$ to $C$ such that $(\lambda', x^n)$ form local coordinates around $C_0$. This gives, on $\partial M$,
    \[
    \sigma_x(v) = (F^{-1})^*\left( e^{i\pi (N+1)/4}a(x', 0, \theta) \left|\frac{D(\lambda', {\partial_\theta \phi}(\cdot,0,\cdot))}{D(x', \theta)}\right|^{-1/2}|d\lambda'\wedge dx^n|^{1/2} \right)
    \]
    where $F: C \to \Lambda_0, (x, \theta) \mapsto (x, \phi_x)$ and we used the fact that on $\partial M$
    \[
    \frac{D(\lambda', x^n, {\partial_\theta \phi})}{D(x', x^n, \theta)} = \begin{bmatrix}
        \frac{D(\lambda', {\partial_\theta \phi}(\cdot,0,\cdot))}{D(x', \theta)} & \left.\frac{D(\lambda', {\partial_\theta \phi})}{Dx^n}\right|_{x^n=0}\\
        0 & 1
    \end{bmatrix}.
    \]

    First suppose $w_1,\dots, w_{n-1}$ are tangent to $\Lambda_0|_{U_0}$, which is equivalent to $dx^n(w_j) = 0$. Then $dx^n(F_*^{-1}w_j) = (F^{-1})^*dx^n (w_j) = dx^n w_j = 0$. We also have $F^{-1}_*w_j = (F^{-1}_0)_*(\pi_*w_j)$ because $F^{-1}(\Lambda_0|_{U_0}) = C_0 = F^{-1}_0(\Lambda_0')$. Then we have
    \begin{align*}
        &\sigma_x(v)(w_1, \dots, w_n) \\
        &= e^{i\pi(N+1)/4}a(x', 0, \theta) \left|\frac{D(\lambda', {\partial_\theta \phi}(\cdot,0,\cdot))}{D(x', \theta)}\right|^{-1/2}|d\lambda'\wedge dx^n|^{1/2} (F^{-1}_*w_1, \dots, F^{-1}_*w_n)\\
        &= e^{i\pi (N+1)/4}a(x', 0, \theta) \left|\frac{D(\lambda', {\partial_\theta \phi}(\cdot,0,\cdot))}{D(x', \theta)}\right|^{-1/2}\\
        &\quad\quad\cdot|d\lambda'|^{1/2} ((F^{-1}_0)_*\pi_*w_1, \dots, (F^{-1}_0)_*\pi_*w_{n-1})|dx^n(w_n)|^{1/2}\\
        &= e^{i\pi/4}\sigma_{x'}(f)(\pi_*w_1, \dots, \pi_*w_{n-1})|dx^n(w_n)|^{1/2}.
    \end{align*}
    In semi-geodesic normal coordinates, $|g|_x = |\bar{g}|_{x'}$, so
    \[
    \sigma(\tilde{v})(w_1, \dots, w_n) = e^{i\pi/4}\sigma(\tilde{f})(\pi_*w_1, \dots, \pi_*w_{n-1})|dx^n(w_n)|^{1/2}.
    \]

    Now consider more general $w_1, \dots, w_{n-1}$, let $w_n$ be such that $dx^n(w_n) \neq 0$. By setting $a_j = -\frac{dx^n(w_j)}{dx^n(w_n)}$ we have $dx^n(w_j +a_jw_n) = 0$. We can thus apply the previous result and get
    \begin{align*}
        \sigma(\tilde{v})(w_1, \dots, w_n) &= \sigma(\tilde{v})(w_1+a_1w_n, \dots, w_{n-1}+a_{n-1}w_n, w_n)\\
        &= e^{i\pi/4}\sigma(\tilde{f})(\pi_*(w_1+a_1w_n), \dots, \pi_*(w_{n-1}+a_{n-1}w_n))|dx^n(w_n)|^{1/2}
    \end{align*}
    where the first equal sign comes from
    \[
    w_1 \wedge \dots \wedge w_n = (w_1+a_1w_n) \wedge \dots \wedge (w_{n-1}+a_{n-1}w_n) \wedge w_n.
    \]
\end{proof}

We can set $w_n = H_p$, as it is always transversal to the boundary on broken bicharacteristics, that is $|dx^n(H_p)| = 2|\xi_n| > 0$ (recall that $\xi_n = \sqrt{-g^{\alpha\beta}\xi_{\alpha}\xi_{\beta}}$ is already non-negative, we use $|\xi_n|$ to emphasize this). By Lemma \ref{lemma: relate principal symbol with restriction}, $\sigma(\tilde{v}_0)$ solves
\[
\begin{cases}
    \left(\mathcal{L}_{H_p}-iA(H_p)\right)\sigma(\tilde{v}_0) = 0 \quad \text{on } \Lambda_0,\\
    \sigma(\tilde{v}_0)(w_1, \dots, w_n) = e^{i\pi/4}\sigma(\tilde{f})(\pi_*(w_1+a_1w_n), \dots, \pi_*(w_{n-1}+a_{n-1}w_n))|dx^n(w_n)|^{1/2},
\end{cases}
\]
where $(x',x^n)$ is semi-geodesic normal coordinates, $w_j$ are vectors at $\Lambda_0|_{U_0}$ such that $dx^n(w_n) \neq 0$, and $a_j = -\frac{dx^n(w_j)}{dx^n(H_p)}$. Since $f$ is set to be classical,
{
this transport equation has a unique homogeneous solution, for example see \cite[Section 6.4]{Duistermaat1972} and \cite[Section 6]{Melrose1979}.
}

Choose any $\tilde{v}_0$ such that $\sigma(\tilde{v}_0)$ solves the above equation, and denote $h_1 = P_{\tilde{g},\tilde{A},\tilde{q}}\tilde{v}_0 \in I^{\mu-3/4}_{cl}(\Lambda_0; \Omega^{1/2}_{M_e})$. By construction, $v_0|_{U_0}$ is a Lagrangian distribution with respect to $\Lambda_0'$, and $f - v_0|_{U_0} \in I_{cl}^{\mu-1}(\Lambda_0')$ in $U_0$. Consider $\tilde{v}_1 = v_1|\tilde{g}|^{1/4} \in I^{\mu-3/4}(\Lambda_0; \Omega^{1/2}_{M_e})$, \eqref{eq: symbolic construction} implies that it solves
\[
\begin{cases}
    P_{\tilde{g}, \tilde{A},\tilde{q}}\tilde{v}_1 = -h_1 + I_{cl}^{\mu-7/4}(\Lambda_0; \Omega^{1/2}_{M_e}),\\
    v_1|_{U_0} = f - v_0|_{U_0} + I_{cl}^{\mu-2}(\Lambda_0').
\end{cases}
\]
We again use \eqref{eq: symbol of P}, \eqref{eq: lie derivative of principal symbol} and Lemma \ref{lemma: relate principal symbol with restriction} to solve the equation for $\sigma(\tilde{v}_1)$. The lower order terms $\tilde{v}_k$ can be constructed inductively, and the construction is complete.

Note that by construction, $u_0|_{U_1} \in I^{\mu}_{cl}(\Lambda_1')$ is a Lagrangian distribution by Proposition \ref{prop: Lambda_restriciton}. Thus, we again inductively and symbolically construct $\tilde{u}_k \in I^{\mu+1/4}_{cl}(\Lambda_k; \Omega^{1/2}_{M_e})$ in the same way, and require it to solve
\[
\begin{cases}
    P_{\tilde{g},\tilde{A},\tilde{q}}\tilde{u}_k \in C^{\infty}(M_e),\\
    u_k|_{U_k} + u_{k-1}|_{U_k} \in C^{\infty}(U_k).
\end{cases}
\]
Put everything together and restrict to $\tau^{-1}([-T, T]) \subset M$, $\tilde{u} = \sum_{j=0}^N \tilde{u}_j$ thus solves
\begin{equation*}
    \begin{cases}
        \Box_{g, A, q}u \in C^{\infty}(M^{\circ} \cap \tau^{-1}([-T, T])),\\
        u|_{\partial M} = f + C^{\infty}(\partial M \cap \tau^{-1}([-T, T])).
    \end{cases}
\end{equation*}
In other words, $u$ differs from the actual solution only by a smooth function, so the actual solution is also a Lagrangian distribution sharing the same principal symbol as $u$. Next we analyze what the principal symbol of $u$ is.
\begin{prop}\label{prop: relate principal symbol along bich flow}
    Let $\Lambda_{beg}$ be a small conic Lagrangian submanifold in $\mathcal{H}$, denote
    \[
    \Lambda_{flow} = \cup_{(y', \eta') \in \Lambda_{beg}} \Gamma_{y',\eta'}
    \]
    its flowout Lagrangian along forward bicharacteristics. Let
    \[
    \Lambda_{end} = \{\Gamma_{y', \eta'}(t) \in \partial M: \Gamma_{y',\eta'}(0) = (y', \eta') \in \Lambda_{beg}, t \neq 0\}
    \]
    the projection of ending points of these bicharacteristics. When $\Lambda_{beg}$ is small enough, the projection of $\Lambda_{beg}$ and $\Lambda_{end}$ on $\partial M$ are disjoint. Then we have the lens relation
    \[
    L_{\Lambda_{flow}}: \Lambda_{beg} \ni (y', \eta') \mapsto (x', \xi') \in \Lambda_{end}
    \]
    along bicharacteristics. Suppose $\tilde{v} = v|g|^{1/4} \in I_{cl}(\Lambda_{flow}; \Omega_M^{1/2})$ satisfies
    \[
    \mathcal{L}_{H_p}\sigma(\tilde{v}) = iA(H_p)\sigma(\tilde{v}),
    \]
    denote $\tilde{v}_1 = v|_{U_1}|\bar{g}|^{1/4} \in I_{cl}(\Lambda_{beg}; \Omega_{\partial M}^{1/2})$, $\tilde{v}_2 = v|_{U_2}|\bar{g}|^{1/4} \in I_{cl}(\Lambda_{end}; \Omega_{\partial M}^{1/2})$ where $U_1, U_2 \subset \partial M$ disjoint small neighborhoods around $\Lambda_{beg}, \Lambda_{end}$, respectively. Then we have
    \[
    |\xi_n|^{1/2}\sigma(\tilde{v}_2)|_{(x', \xi')} = \exp{\left(i\int_{\gamma}A(\dot{\gamma})\right)} \cdot |\eta_n|^{1/2}(L_{\Lambda_{flow}}^{-1})^*(\sigma(\tilde{v}_1)|_{(y', \eta')}),
    \]
    where the null-geodesic $\gamma$ is the projection of forward bicharacteristic from $(y', \eta')$ to $(x', \xi')$ (note that $\int_\gamma A(\dot{\gamma})$ is independent of parametrization of $\gamma$), and
    \[
    \xi_n = \sqrt{-\bar{g}^{\alpha\beta}\xi_{\alpha}\xi_{\beta}}, \quad \eta_n = \sqrt{-\bar{g}^{\alpha\beta}\eta_{\alpha}\eta_{\beta}}.
    \]
\end{prop}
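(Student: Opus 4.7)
The plan is to combine the boundary-restriction formula of Lemma~\ref{lemma: relate principal symbol with restriction}, applied separately at the two endpoints of the bicharacteristic, with the integration of the transport equation $(\mathcal{L}_{H_p} - iA(H_p))\sigma(\tilde v) = 0$ along the Hamiltonian flow. Strict null-convexity guarantees that $H_p$ is transversal to $\partial M$ at both $(y',\eta)$ and $(x',\xi)$, so $H_p$ is an admissible choice for the last slot $w_n$ in Lemma~\ref{lemma: relate principal symbol with restriction}, with $|dx^n(H_p)| = 2|\eta_n|$ at $(y',\eta)$ and $2|\xi_n|$ at $(x',\xi)$ in semi-geodesic normal coordinates.

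First, I would pick a basis $w_1,\dots,w_{n-1}$ of $T_{(y',\eta)}(\Lambda_{flow}|_{U_1})$ with $dx^n(w_j) = 0$, and apply Lemma~\ref{lemma: relate principal symbol with restriction} with $a_j = 0$ to obtain
\[
\sigma(\tilde v)|_{(y',\eta)}(w_1,\dots,w_{n-1},H_p) = e^{i\pi/4}|2\eta_n|^{1/2}\,\sigma(\tilde v_1)|_{(y',\eta')}(\pi_*w_1,\dots,\pi_*w_{n-1}).
\]
Next, pushing forward along the Hamiltonian flow, set $\tilde w_j = (\Phi_T)_*w_j$, where $\Phi_T(y',\eta) = (x',\xi)$. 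These vectors need not be boundary-tangential, but choosing $a_j = -dx^n(\tilde w_j)/dx^n(H_p)$ puts $\tilde w_j + a_j H_p$ into $T_{(x',\xi)}(\Lambda_{flow}|_{U_2})$, and by the very definition of the lens relation $\pi_*(\tilde w_j + a_j H_p) = (L_{\Lambda_{flow}})_*\pi_* w_j$. Applying Lemma~\ref{lemma: relate principal symbol with restriction} at $(x',\xi)$ and absorbing the $a_j H_p$ corrections via the wedge-product identity $w_1\wedge\cdots\wedge w_n = (w_1 + a_1 w_n)\wedge\cdots\wedge(w_{n-1}+a_{n-1}w_n)\wedge w_n$ with $w_n = H_p$ yields
\[
\sigma(\tilde v)|_{(x',\xi)}(\tilde w_1,\dots,\tilde w_{n-1},H_p) = e^{i\pi/4}|2\xi_n|^{1/2}\,\sigma(\tilde v_2)|_{(x',\xi')}\bigl(L_*\pi_*w_1,\dots,L_*\pi_*w_{n-1}\bigr),
\]
where I write $L = L_{\Lambda_{flow}}$.

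Finally, since $(\Phi_T)_*H_p = H_p$, the solution of the half-density transport equation along the bicharacteristic is
\[
\sigma(\tilde v)|_{(x',\xi)}(\tilde w_1,\dots,\tilde w_{n-1},H_p) = \exp\!\Bigl(i\int_0^T A(H_p)|_{\Gamma(s)}\,ds\Bigr)\,\sigma(\tilde v)|_{(y',\eta)}(w_1,\dots,w_{n-1},H_p).
\]
By \eqref{eq: symbol of P} the base component of $H_p$ is $-2\xi^{\sharp} = \dot\gamma$ under the natural parametrization, so the exponent equals $i\int_\gamma A(\dot\gamma)$. Equating the two expressions for the right-hand side of the flow identity (the $e^{i\pi/4}$ factors cancel) and using the definition $(L^{-1})^*\sigma(\tilde v_1)|_{(x',\xi')}(L_*\pi_*w_j) = \sigma(\tilde v_1)|_{(y',\eta')}(\pi_*w_j)$ of the pullback of half-densities gives the claimed identity.

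The main technical subtlety I expect is the careful bookkeeping of the half-density transformation laws under pushforward and pullback, combined with ensuring that the correction terms $a_j H_p$ do not contaminate the final formula --- this is precisely why the wedge-product trick from the end of Lemma~\ref{lemma: relate principal symbol with restriction} is essential, since all corrections land in the last slot occupied by $H_p$. Once this linear-algebra point is in hand, the remainder of the argument is a direct comparison of the two boundary identities via the solved transport equation.
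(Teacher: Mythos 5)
Your proof is correct and follows essentially the same route as the paper's: both apply Lemma~\ref{lemma: relate principal symbol with restriction} at each boundary endpoint with $w_n = H_p$, integrate the transport equation along the flow, and absorb the arrival-time-variation corrections into the $H_p$ slot via the wedge-product identity. The one compressed step is your assertion that $\pi_*(\tilde w_j + a_j H_p) = (L_{\Lambda_{flow}})_*\pi_* w_j$ holds \emph{by the very definition of the lens relation} --- the paper isolates this as Lemma~\ref{lemma: pushforward of vector along flow}, whose short proof makes explicit that the variation of the arrival time is what produces the $a_j H_p$ correction; the fact is true and your $a_j$ is the right one, so this is a minor omission of justification rather than a gap.
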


To prove the proposition, we first state a Lemma, relating the pushforward of vectors along the flow and the pushforward of vectors via lens relation.

\begin{lem}\label{lemma: pushforward of vector along flow}
    Let $L = L_{\Lambda_{flow}}, \Lambda_{beg}, \Lambda_{flow}, \Lambda_{end}$ be as defined in Proposition \ref{prop: relate principal symbol along bich flow}. Denote $\Phi_t$ the flow of $H_p$ along $\Lambda_{flow}$. Let $L(\bar{y}', \bar{\eta}') = (\bar{x}', \bar{\xi}')$, $w \in T_{(\bar{y}', \bar{\eta}')}\Lambda_{beg}$, $\pi: \Lambda_{flow}|_{\partial M} \to \Lambda_{beg}\sqcup \Lambda_{end}$, then
    \[
    L_*w = \pi_*((\Phi_T)_*(\pi^{-1}_*w) + aH_p)
    \]
    where $\Phi_T(\pi^{-1}(\bar{y}',\bar{\eta}')) = \pi^{-1}(\bar{x}', \bar{\xi}')$ and $a \in \mathbb{R}$ is the unique number such that
    \[
    dx^n(\Phi_{T*}(\pi^{-1}_*w) + aH_p) = 0.
    \]
\end{lem}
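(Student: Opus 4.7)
The plan is to factor the lens relation $L$ as $\pi \circ (\text{bicharacteristic flow}) \circ \pi^{-1}$ and differentiate, keeping careful track of the fact that the time parameter of the flow depends on the basepoint.

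First I would verify that, near $(\bar{y}', \bar{\eta}')$, the projection $\pi: \Lambda_{flow}|_{\partial M} \to \Lambda_{beg} \sqcup \Lambda_{end}$ is a local diffeomorphism onto $\Lambda_{beg}$ and onto $\Lambda_{end}$ separately. This is exactly the content of Proposition \ref{prop: Lambda_restriciton}, combined with strict null-convexity ensuring that bicharacteristics in $\Lambda_{flow}$ meet $\partial M$ transversally; when $\Lambda_{beg}$ is small enough, the two ``sheets'' $\pi^{-1}(\Lambda_{beg})$ and $\pi^{-1}(\Lambda_{end})$ are disjoint, so $\pi^{-1}$ is unambiguously defined on each piece. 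With this in hand, the implicit function theorem applied to the scalar equation $x^n\bigl(\Phi_t(\pi^{-1}(y', \eta'))\bigr) = 0$ near $t = T$, $(y',\eta') = (\bar{y}', \bar{\eta}')$, together with transversality $dx^n(H_p) \neq 0$ on $\Lambda_{flow}|_{\partial M}$, produces a smooth time function $T(y', \eta')$ with $T(\bar{y}', \bar{\eta}') = T$ and such that $\Phi_{T(y',\eta')}(\pi^{-1}(y',\eta')) \in \pi^{-1}(\Lambda_{end})$.

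Next I would differentiate. The factorization $L(y', \eta') = \pi \circ \Phi_{T(y',\eta')} \circ \pi^{-1}(y', \eta')$ holds near $(\bar{y}', \bar{\eta}')$ by construction of the lens relation along the flow. Let $c: (-\e, \e) \to \Lambda_{beg}$ be a smooth curve with $c(0) = (\bar{y}', \bar{\eta}')$ and $c'(0) = w$. The chain rule, combined with $\partial_t \Phi_t = H_p \circ \Phi_t$, gives
\[
\frac{d}{ds}\bigg|_{s=0} \Phi_{T(c(s))}\bigl(\pi^{-1}(c(s))\bigr) \;=\; \Phi_{T*}(\pi^{-1}_* w) \;+\; T'(0)\, H_p,
\]
where the right-hand side is evaluated at $\Phi_T(\pi^{-1}(\bar{y}', \bar{\eta}')) = \pi^{-1}(\bar{x}', \bar{\xi}')$. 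Pushing forward by $\pi$ on both sides yields
\[
L_* w \;=\; \pi_*\bigl(\Phi_{T*}(\pi^{-1}_* w) + T'(0)\, H_p\bigr),
\]
which already has the required form once we identify $a = T'(0)$.

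Finally I would check that $T'(0)$ is the unique scalar $a$ specified in the statement. Because the curve $s \mapsto \Phi_{T(c(s))}(\pi^{-1}(c(s)))$ lies entirely in $\Lambda_{flow}|_{\partial M}$, its $s=0$ derivative is tangent to the boundary, so $dx^n$ must annihilate it. This forces $dx^n\bigl(\Phi_{T*}(\pi^{-1}_*w) + T'(0) H_p\bigr) = 0$, and uniqueness of the scalar satisfying this equation is immediate from $dx^n(H_p) \neq 0$, so $T'(0) = a$. The main obstacle I anticipate is not the differentiation, which is routine once the factorization is in place, but rather the bookkeeping around the two sheets of $\pi^{-1}$: one must be sure that the local inverse of $\pi$ chosen at the initial point corresponds correctly to the lightlike sheet on which the forward bicharacteristic departs, and similarly for the endpoint, so that the composition really reproduces $L$ rather than some other branch. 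This is handled by restricting attention to conic neighborhoods small enough that Proposition \ref{prop: Lambda_restriciton} and the transversality statement near the hyperbolic set single out one sheet at each end.
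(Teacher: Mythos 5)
Your proof is correct and follows essentially the same route as the paper: factor $L = \pi \circ \Phi_{t(\cdot)} \circ \pi^{-1}$, differentiate along a curve to produce the extra $H_p$ term proportional to the derivative of the hitting time, and then identify that coefficient with $a$ by observing that the image curve stays in $\Lambda_{flow}|_{\partial M}$, so $dx^n$ annihilates its velocity. The paper phrases the differentiation step as a local-coordinate computation viewing $\pi^{-1}\Lambda_{end}$ as a graph $\{(z,t):t=f(z)\}$ over $\Phi_T(\pi^{-1}\Lambda_{beg})$ rather than invoking the chain rule on an explicit curve, but the content is the same; your added remarks about the implicit function theorem for the hitting time and the two-sheet structure of $\pi^{-1}$ are justified refinements of the same argument.
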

\begin{proof}
    Let $t(y', \eta')$ be the arriving time for $(y', \eta') \in \Lambda_{beg}$, that is
    \[
    \Phi_{t(y', \eta')}(y', 0, \eta_\pm) = (x', 0, \xi_\mp) \in \pi^{-1}\Lambda_{end}, \quad L = \pi \circ \Phi_{t(\cdot)} \circ \pi^{-1}.
    \]
    Note that $t(\bar{y}', \bar{\eta}') = T$. Denote $\tilde{w} = (\Phi_T)_*\pi^{-1}_*w$, it suffices to show
    \[
    \tilde{w} - (\Phi_{t(\cdot)-T})_*\tilde{w} = aH_p
    \]
    for some $a \in \mathbb{R}$. Denote $N = \{\Phi_T(y', 0, \eta_\pm): (y', \eta') \in \Lambda_{beg}\} = \Phi_T(\pi^{-1}\Lambda_{beg})$, $\tilde{N} = \{\Phi_{t(y', \eta')}(y', 0, \eta_\pm): (y', \eta') \in \Lambda_{beg}\} = \pi^{-1}\Lambda_{end}$, clearly $\pi^{-1}(\bar{x}', \bar{\xi}') \in N\cap \tilde{N}$. $\tilde{N}$ can be viewed as a graph of $N$ with coordinates $(p, t)$ where $p \in N$ and $\Phi_t(p) \in \tilde{N}$. The result then immediately follows by computing in local coordinates, since in local coordinates the pushforward of $\partial_{z^1}$ from $\{(z, t): t=0\}$ to $\{(z, t): t = f(z)\}$ via $(\cdot ,f(\cdot))$ is simply $\partial_{z^1} + \frac{\partial f}{\partial z^1}\partial_t$, and here $\partial_t$ is given by $H_p$. Finally, $a$ is uniquely determined by the fact that $\tilde{w} \in TN$ and $(\Phi_{t(\cdot)-T})_*\tilde{w} \in T\tilde{N}$.
\end{proof}

\begin{proof}[Proof of Proposition \ref{prop: relate principal symbol along bich flow}]
    Solve the Lie derivative equation one get
    \[
    \Phi_T^*(\sigma(\tilde{v})|_{(x',0,\xi_\pm)}) = \exp\left(i\int_{\gamma}A(\dot{\gamma})\right)\sigma(\tilde{v})|_{(y',0, \eta_\mp)}
    \]
    Denote $w_1, \cdots, w_{n-1} \in T_{(y', \eta')}\Lambda_{beg}$. Then
    \begin{align*}
        &|\xi_n|^{1/2}\sigma(\tilde{v}_2)|_{(x', \xi')}(L_*w_1, \cdots, L_*w_{n-1})\\
        &=|\xi_n|^{1/2}\sigma(\tilde{v}_2)|_{(x', \xi')}(\pi_*((\Phi_T)_*(\pi^{-1}_*w_1) + a_1H_p), \cdots, \pi_*((\Phi_T)_*(\pi^{-1}_*w_{n-1}) + a_{n-1}H_p))\\
        &=C\sigma(\tilde{v})|_{(x',0,\xi_\pm)}(H_p, (\Phi_T)_*(\pi^{-1}_*w_1) + a_1H_p, \cdots, (\Phi_T)_*(\pi^{-1}_*w_{n-1}) + a_{n-1}H_p)\\
        &=C\sigma(\tilde{v})|_{(x',0,\xi_\pm}((\Phi_T)_*H_p, (\Phi_T)_*(\pi^{-1}_*w_1), \cdots, (\Phi_T)_*(\pi^{-1}_*w_{n-1}))\\
        &=C\Phi_T^*(\sigma(\tilde{v})|_{(x',0,\xi_\pm)})(H_p, \pi^{-1}_*w_1, \cdots, \pi^{-1}_*w_{n-1})\\
        &=C\exp{(i\int_{\gamma}A(\dot{\gamma}))}\cdot\sigma(\tilde{v})|_{(y',0, \eta_\mp)}(H_p, \pi^{-1}_*w_1, \cdots, \pi^{-1}_*w_{n-1})\\
        &=\exp{(i\int_{\gamma}A(\dot{\gamma}))} \cdot |\eta_n|^{1/2}\sigma(\tilde{v}_1)|_{(y', \eta')}(w_1, \cdots, w_{n-1})
    \end{align*}
    where $C = \frac{e^{-i\pi/4}}{\sqrt{2}}$, and we used Lemma \ref{lemma: relate principal symbol with restriction} and Lemma \ref{lemma: pushforward of vector along flow}.
\end{proof}

Let $(x_j', \xi^{j\prime}) \in \Lambda_j'$ be such that they are connected by a forward broken bicharacteristic $\Gamma^b$ from $(x_0, \xi^{0\prime})$. By Proposition \ref{prop: relate principal symbol along bich flow} we thus have for any $k$,
\begin{align*}
    &|\xi^k_n|^{1/2} \sigma(u_{k-1}|_{U_k}|\bar{g}|^{1/4})|_{(x_k',\xi^{k\prime})}\\
    &= \exp{\left(i\int_{\gamma^b(x_{k-1}' \to x_k')}A(\dot{\gamma}^b)\right)} \cdot |\xi^{k-1}_n|^{1/2}(L^{-1})^* \left(\sigma(u_{k-1}|_{U_{k-1}}|\bar{g}|^{1/4})|_{(x_{k-1}', \xi^{k-1\prime})}\right)
\end{align*}
where $\xi^j_n = \sqrt{-\bar{g}^{\alpha \beta}\xi^j_{\alpha}\xi^j_{\beta}}$, $\gamma^b$ is the projection of $\Gamma^b$, and $L$ is the lens relation which maps $\Lambda_j'$ to $\Lambda_{j+1}'$ for all $j = 0, 1, \dots, N-1$. By construction, $u_k|_{U_k} + u_{k-1}|_{U_k} \in C^{\infty}(U_k)$ and $u_0|_{U_0} = f + C^{\infty}(U_0)$, so we have
\begin{equation*}
    \begin{split}
        &|\xi^k_n|^{1/2} \sigma(u_{k-1}|_{U_k}|\bar{g}|^{1/4})|_{(x_k',\xi^{k\prime})} \\
        &= (-1)^{k-1}\exp{\left(i\int_{\gamma^b(x_0' \to x_k')}A(\dot{\gamma}^b)\right)} \cdot |\xi^0_n|^{1/2}(L^{-k})^*\left(\sigma(\tilde{f})|_{(x_0', \xi^{0\prime})}\right).
    \end{split}
\end{equation*}
Note that in semi-geodesic normal coordinates, $\sigma(\partial_{x^n} \psi)|_{(x, \xi)} = i\xi_n\sigma(\psi)$. Then we have
\begin{align*}
    &\sigma(\partial_{\nu}u|_{U_k}|\bar{g}|^{1/4})|_{(x_k', \xi^{k\prime})} \\
    &= \sigma(-\partial_{x^n}u_{k-1}|_{U_k}|\bar{g}|^{1/4})|_{(x_k', \xi^{k\prime})} + \sigma(-\partial_{x^n}u_k|_{U_k}|\bar{g}|^{1/4})|_{(x_k', \xi^{k\prime})}\\
    &= -(\pm i|\xi^k_n|)\sigma(u_{k-1}|_{U_k}|\bar{g}|^{1/4})|_{(x_k', \xi^{k\prime})} - (\mp i|\xi^k_n|)\sigma(u_k|_{U_k}|\bar{g}|^{1/4})|_{(x_k', \xi^{k\prime})}\\
    &=\mp 2i|\xi^k_n|\sigma(u_{k-1}|_{U_k}|\bar{g}|^{1/4})|_{(x_k', \xi^{k\prime})}\\
    &=\pm 2i(-1)^k\exp{\left(i\int_{\gamma^b(x_0' \to x_k')}A(\dot{\gamma}^b)\right)} \cdot |\xi^k_n|^{1/2}|\xi^0_n|^{1/2}(L^{-k})^*\left(\sigma(\tilde{f})|_{(x_0', \xi^{0\prime})}\right),
\end{align*}
where the sign is $+$/$-$ if $(x_0', \xi^{0\prime})$ is past/future-pointing. We thus proved the following result.
\begin{prop}\label{prop: principal symbol formula}
    View $\Lambda^{U_0,U_k}_{g,A,q}$ as an operator on half-density valued distributions via
    \[
    \Lambda^{U_0,U_k}_{g,A,q}(f|\bar{g}|^{1/4}) := (\Lambda^{U_0,U_k}_{g,A,q}f)|\bar{g}|^{1/4}.
    \]
    Suppose $L^k(x_0', \xi^{0\prime}) = (x_k', \xi^{k\prime})$ for some $k \geq 1$, $x_0' \in U_0$, $x_k' \in U_k$, and $L$ is the lens relation. Then microlocally in a conic neighborhood of $(x_k', \xi^{k\prime}; x_0', \xi^{0\prime}) \in T^*U_k \times T^*U_0$, the operator $\Lambda^{U_0, U_k}_{g,A,q}$ is an elliptic Fourier integral operator of order 1 whose associated canonical relation is given by
    \[
    \{(L^k(x', \xi'); x', \xi'): (x',\xi') \in T^*U_0 \cap \mathcal{H}\}.
    \]
    Its principal symbol is given by
    \[
    \sigma(\Lambda^{U_0,U_k}_{g,A,q})|_{(x_k',\xi^{k\prime}; x_0',\xi^{0\prime})} = \pm 2i(-1)^k\exp\left(i\int_{\gamma^b} A(\dot{\gamma}^b)\right)|\xi^k_n|^{1/2}|\xi^0_n|^{1/2}(L^{-k})^*,
    \]
    where
    \begin{itemize}
        \item $\gamma^b$ is the forward broken null-geodesic that is the projection of the forward broken bicharacteristic connecting
        \[
        (x_0', \xi^{0\prime}) \to (x_1', \xi^{1\prime}) \to \cdots \to (x_k', \xi^{k\prime})
        \]
        (note that $\int_{\gamma^b} A(\dot{\gamma}^b)$ is independent of parametrization of $\gamma^b$);
        \item the sign is $+$/$-$ if $(x_0', \xi^{0\prime})$ is a past/future-pointing covector.
    \end{itemize}
\end{prop}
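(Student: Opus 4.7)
The proposition essentially consolidates the symbolic construction carried out in Section \ref{subsec: microlocal construction}, so the plan is to assemble the pieces already developed rather than introduce new machinery. First, for $f \in I^\mu_{cl}(\Lambda_0')$ with $\Lambda_0'$ a small conic Lagrangian neighborhood of $(x_0', \xi^{0\prime})$ inside $\mH$, I would invoke the symbolic construction to produce Lagrangian distributions $\tilde{u}_j \in I^{\mu+1/4}_{cl}(\Lambda_j; \Omega^{1/2}_{M_e})$ for $j=0,\dots,k$ along the segments of the broken bicharacteristic, so that $\tilde{u} = \sum_j \tilde{u}_j$ solves $\Box_{g,A,q}u = 0$ modulo smoothing and $u|_{\partial M} = f$ modulo smoothing near the relevant boundary pieces. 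Ellipticity on the specified canonical relation follows from the construction, since the leading amplitude is nonzero whenever $\sigma(\tilde{f})$ is nonzero at $(x_0', \xi^{0\prime})$; meanwhile, the canonical relation is the iterated graph of $L$ because each segment contributes one application of the lens relation (which is locally a canonical transformation by strict null-convexity, see Proposition \ref{prop: Lambda_restriciton}).

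Next I would compute the principal symbol by iterating Proposition \ref{prop: relate principal symbol along bich flow} along each interior segment of the broken bicharacteristic. Each application contributes a factor $\exp(i\int_{\gamma^b_j} A(\dot\gamma^b))$ together with a Jacobian ratio $|\xi^j_n|^{1/2}/|\xi^{j-1}_n|^{1/2}$ relating the principal symbols of the boundary restrictions, and the middle factors telescope to give $|\xi^k_n|^{1/2}|\xi^0_n|^{1/2}$ after $k$ steps. The matching condition $u_j|_{U_j} + u_{j-1}|_{U_j} \in C^\infty(U_j)$ imposed at each reflection flips the sign of the leading symbol on passing through the boundary, producing the cumulative factor $(-1)^{k-1}$ in the expression for $\sigma(u_{k-1}|_{U_k}|\bar g|^{1/4})$.

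To turn $u$ into $\Lambda^{U_0, U_k}_{g,A,q} f = \partial_\nu u|_{U_k}$, I would use that in semi-geodesic normal coordinates $\partial_\nu = -\partial_{x^n}$ and on Lagrangian distributions based on $\Lambda_{k-1}$ or $\Lambda_k$ the symbol calculus gives $\sigma(\partial_{x^n}\psi) = i\xi_n \sigma(\psi)$, with the sign of $\xi_n$ determined by whether the covector at the boundary is inward- or outward-pointing. The two contributions $-\partial_{x^n} u_{k-1}|_{U_k}$ and $-\partial_{x^n} u_k|_{U_k}$ come from the incident and reflected branches at $(x_k', \xi^{k\prime})$; by the construction $u_k|_{U_k} \equiv -u_{k-1}|_{U_k}$ on the principal level, but the signs of $\xi_n$ on the two branches are opposite, so the two terms add constructively, yielding the factor $\mp 2i|\xi^k_n|$. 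The overall sign depends on whether the initial covector $(x_0', \xi^{0\prime})$ is past-pointing (so the bicharacteristic leaves from an outward-pointing preimage) or future-pointing, as recorded in the discussion following \eqref{def: glancingset}. Combining everything gives precisely the stated formula.

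The main technical obstacle is the careful bookkeeping of signs: the $(-1)^{k-1}$ from the reflection matching conditions, the $\pm$ from the time orientation of $(x_0', \xi^{0\prime})$, and the $\mp$ from the fact that reflected and incident branches carry opposite signs of $\xi_n$. Each of these is individually routine, but assembling them correctly requires writing out the first two or three values of $k$ explicitly and then inducting. A secondary point requiring care is to verify that the parametrization-invariance of $\int_{\gamma^b}A(\dot\gamma^b)$ and the half-density transfer law from Lemma \ref{lemma: relate principal symbol with restriction} combine, through Proposition \ref{prop: relate principal symbol along bich flow}, into the clean form $|\xi^k_n|^{1/2}|\xi^0_n|^{1/2}(L^{-k})^*$ rather than some disguised equivalent; this is essentially the content of the telescoping referred to above.
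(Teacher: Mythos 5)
Your proposal reproduces the paper's proof of Proposition~\ref{prop: principal symbol formula} essentially step-by-step: symbolic construction of $\tilde u_j$ on the flowout Lagrangians from Section~\ref{subsec: microlocal construction}, iteration of Proposition~\ref{prop: relate principal symbol along bich flow} along the segments of the broken bicharacteristic, the sign flips $\sigma(u_j|_{U_j}) = -\sigma(u_{j-1}|_{U_j})$ at reflections yielding $(-1)^{k-1}$, and the computation of $\sigma(\partial_\nu u|_{U_k})$ from the incident and reflected branches carrying opposite signs of $\xi^k_n$ to produce $\mp 2i|\xi^k_n|$. One minor bookkeeping imprecision: the telescoping of Proposition~\ref{prop: relate principal symbol along bich flow} alone gives a quotient $|\xi^0_n|^{1/2}/|\xi^k_n|^{1/2}$ in the relation between $\sigma(u_{k-1}|_{U_k})$ and $\sigma(\tilde f)$, not the product $|\xi^k_n|^{1/2}|\xi^0_n|^{1/2}$; the product only appears after multiplying by the extra $|\xi^k_n|$ coming from the normal derivative, which you do address separately, so the final formula is recovered correctly.
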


\begin{rem}
    Note that the way we turn DN map into an operator on half-density valued distributions depends on the metric. Specifically, this means that one should always be careful which of the following situations they are in:
    \begin{enumerate}
        \item If $\Lambda^{U,V}_{g_1,A_1,q_1} = \Lambda^{U,V}_{g_2,A_2,q_2}$ are given as \textbf{operators on distributions}, then to make them equivalent as operators on half-densities, we can let
        \[
        \Lambda^{U,V}_{g_j,A_j,q_j}(\omega) := \Lambda^{U,V}_{g_j,A_j,q_j}(\omega\eta^{-1})\eta
        \]
        for a fixed half-density $\eta$ and $j = 1, 2$.
        \item Similarly, if $\Lambda^{U,V}_{g_1,A_1,q_1} = \Lambda^{U,V}_{g_2,A_2,q_2}$ are given as \textbf{operators on half-density valued distributions}, then to make them equivalent as operators on distributions, we can let
        \[
        \Lambda^{U,V}_{g_j,A_j,q_j}(f) := \Lambda^{U,V}_{g_j,A_j,q_j}(f\eta)\eta^{-1}
        \]
        for a fixed half-density $\eta$ and $j = 1, 2$.
    \end{enumerate}
\end{rem}

Now we explore what information can be obtained from principal symbol of $\Lambda^{U_0, U_k}_{g,A,q}$, recall that in our case, the operators are equivalent when acting on distributions. Suppose we know the conformal class of $\bar{g}$ on $U_0 \cup U_k$, then we can choose some Lorentzian metric $h$ on $U_0 \cup U_k$ in that conformal class, and $\bar{g} = e^{\varphi}h$. Taking absolute value of the principal symbol to get rid of affects from $A$ and time orientation, we obtain a formula to relate the conformal factor at $x_0'$ and $x_k'$:
\begin{equation}\label{eq: conformal factor relation}
    \begin{split}
        &(n-2)\varphi(x_0') - n\varphi(x_k') \\
        &= 4\log\left[ \frac{1}{2}|h(\xi^{0\prime}, \xi^{0\prime})h(\xi^{k\prime},\xi^{k\prime})|^{-1/4} \left|\frac{|h(x_k')|_{x'}^{1/4}\sigma_{x'}(\Lambda^{U_0,U_k}_{g,A,q}f)|_{(x_k',\xi^{k\prime})}}{(L^{-k})^*(|h(x_0')|_{x'}^{1/4}\sigma_{x'}(f)|_{(x_0',\xi^{0\prime})})}\right| \right].
    \end{split}
\end{equation}
It is easy to see that the right hand side is coordinate invariant since both terms in the fraction are principal symbols of half-density valued distributions, and it is scaling invariant with respect to $\xi^{0\prime}$. Moreover, it gives the same value for any $f \in I_{cl}^{\mu}(\Lambda_0')$ as long as $(x_0', \xi^{0\prime}) \in \Lambda_0'$, because we are essentially using the principal symbol of $\Lambda^{U_0,U_k}_{g,A,q}$. In the case where $f$ is not a classical distribution, the right hand side becomes quotient of equivalence classes, the equation then holds as $|\xi^{0\prime}| \to \infty$.

On the other hand, the argument of the principal symbol gives
\begin{equation}\label{eq: 1-form relation}
    \exp\left(i\int_{\gamma^b}A(\dot{\gamma}^b)\right) = \pm i(-1)^k \text{arg}\left( \frac{\sigma_{x'}(\Lambda^{U_0,U_k}_{g,A,q}f)|_{(x_k',\xi^{k\prime})}}{(L^{-k})^*(\sigma_{x'}(f)|_{(x_0',\xi^{0\prime})})} \right)
\end{equation}
where $\text{arg}(z) = \frac{z}{|z|}$, and the sign is $+$/$-$ if $(x_0', \xi^{0\prime})$ is future/past-pointing. We emphasize again that the left hand side is independent of parametrization of $\gamma^b$.

\section{Weak Lens Relation and Conformal Class of Boundary Metric}\label{sec: recovery of weak lens relation}


We first define what weak lens relation is. Intuitively, it is a map between $B_U \subset T^*U \cap \mathcal{H}$ and $B_V \subset T^*V \cap \mathcal{H}$ such that singularities in $B_U$ flow along a forward broken bicharacteristic to points in $B_V$.




\begin{defn}
    The \emph{weak lens relation} is the map
    \[
    \tilde{L}: T^*U \cap \mathcal{H} \supset B_U \mapsto B_V \subset T^*V \cap \mathcal{H},
    \]
    such that
    \begin{enumerate}
        \item $(x', \xi') \in B_U$ and $(y', \eta') \in B_V$ implies $y' \in J^+(x')$;
        \item if $(x', \xi')\in B_U$ and $\Gamma^b_{x',\xi'}$ is its corresponding forward broken bicharacteristic, then $\Gamma^b_{x',\xi'}|_V = B_V$;
        \item $B_U$ is maximal in the sense that there can not be another $\tilde{B}_U \supsetneq B_U$ satisfies the above two conditions.
    \end{enumerate}
\end{defn}

It is not hard to see that the map is well-defined and bijective, as each $B_U$ and $B_V$ correspond to a unique broken bicharacteristic. To see why it is weaker than the standard lens relation $L$, given some $(y', \eta')$ in some $B_U$, there is no guarantee that $L(y', \eta')$ is in $B_U$ or $B_V$. Moreover, $B_U$ and $B_V$ are given as unordered sets, whereas if one knows the lens relation, then points in $B_U$ and $B_V$ can be ordered.
\begin{prop}\label{prop: same weak lens relation}
    $\tilde{L}$ can be constructed from $\Lambda^{U,V}_{g,A,q}$.
\end{prop}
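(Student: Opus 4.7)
The plan is to exploit that $\Lambda^{U,V}_{g,A,q}$ is microlocally a sum of elliptic Fourier integral operators whose canonical relations are the graphs of powers of the lens relation, as given by Proposition \ref{prop: principal symbol formula}; since $U\cap V=\emptyset$ there is no pseudodifferential diagonal contribution over $V$, so the wavefront set of $\Lambda^{U,V}_{g,A,q} f$ is determined entirely by the forward broken bicharacteristics emanating from $\WF(f)$. The weak lens relation merely encodes which subsets of $T^*U\cap\mathcal{H}$ and $T^*V\cap\mathcal{H}$ are joined by a single broken bicharacteristic, so the natural strategy is to probe the DN map with distributions whose wavefront set consists of a single ray.

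For Step 1, the single-covector probe, for each $(x',\xi')\in T^*U\cap\mathcal{H}$ viewed as a point of $S^*U$, I would pick a test distribution $f_{(x',\xi')}\in\mathcal{E}'(U)$ whose wavefront set is exactly the ray $\{(x',t\xi'):t>0\}$; a conormal distribution to any hypersurface in $U$ through $x'$ with conormal direction $\xi'$ does the job. Define
\[
    B(x',\xi') := \WF\!\bigl(\Lambda^{U,V}_{g,A,q} f_{(x',\xi')}\bigr) \subset T^*V.
\]
By Lemma \ref{lemma: propagation of singularity}, applied after localizing to $V$ so that only the hyperbolic/FIO part survives, together with the ellipticity of each FIO piece from Proposition \ref{prop: principal symbol formula}, one obtains $B(x',\xi')=\Gamma^b_{x',\xi'}|_V$, the full set of $V$-reflection covectors of the forward broken null-bicharacteristic through $(x',\xi')$.

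For Step 2, given $(x_0',\xi_0')\in T^*U\cap\mathcal{H}$ with $B(x_0',\xi_0')\neq\emptyset$, I would set $B_V:=B(x_0',\xi_0')$ and
\[
    B_U := \{(x',\xi')\in T^*U\cap\mathcal{H} : B(x',\xi')=B_V\}.
\]
Condition (2) of the weak lens relation is then tautological, and condition (1) holds because every point of $B_V$ lies on the \emph{forward} bicharacteristic through each $(x',\xi')\in B_U$, hence automatically in $J^+(x')$. For maximality, condition (3), if $(x_1',\xi_1')\in T^*U\cap\mathcal{H}$ satisfies $B(x_1',\xi_1')=B_V$, then for any $(y',\eta')\in B_V$ both $\Gamma^b_{x_0',\xi_0'}$ and $\Gamma^b_{x_1',\xi_1'}$ pass through $(y',\eta')$; uniqueness of the broken null-bicharacteristic through any hyperbolic covector, using transversality of reflections at the strictly null-convex boundary plus uniqueness for $H_p$ in the interior, forces the two bicharacteristics to coincide, so every $U$-covector on the common bicharacteristic whose forward $V$-trace equals $B_V$ is in $B_U$.

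The only nontrivial technical point, and the main obstacle, is ensuring that $B(x',\xi')$ captures \emph{every} $V$-intersection of $\Gamma^b_{x',\xi'}$ rather than some proper subset. This is where the ellipticity of each FIO piece in Proposition \ref{prop: principal symbol formula} becomes essential: it guarantees a nonvanishing principal symbol at every $L^k(x',\xi')$ landing in $V$, so no reflection is lost to accidental cancellation. Because successive reflections occur over distinct base points in $V$, their contributions to the wavefront set cannot interfere. Once this surjectivity of the wavefront detection is in hand, the remainder is bookkeeping.
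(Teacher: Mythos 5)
Your argument is essentially the paper's on the level of $S^*\partial M$ (i.e., directions), but it has a genuine gap: it does not recover the fiber magnitudes, and consequently it does not actually produce the sets $B_U,B_V\subset T^*U\cap\mathcal{H}$, $T^*V\cap\mathcal{H}$ required by the definition of the weak lens relation.

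Concretely, the set $B(x',\xi'):=\WF(\Lambda^{U,V}_{g,A,q}f_{(x',\xi')})$ is a conic subset of $T^*V\setminus 0$, i.e., a union of rays, whereas $\Gamma^b_{x',\xi'}|_V$ is a discrete set of specific covectors: if the broken bicharacteristic through $(x',\xi')$ reflects at $V$ with covector $(y',\eta')$, then the one through $(x',t\xi')$ reflects at $(y',t\eta')$, so the wavefront set records only the ray $\{(y',t\eta'):t>0\}$ and forgets which $t$ corresponds to the given $\xi'$. Your Step~1 identity $B(x',\xi')=\Gamma^b_{x',\xi'}|_V$ is therefore false as a statement in $T^*V$; it holds only after projecting to $S^*V$. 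Correspondingly, in Step~2 the set $\{(x',\xi'): B(x',\xi')=B_V\}$ is conic in each fiber, so it cannot satisfy condition~(2) of the definition of $\tilde L$: if $(x',\xi')$ and $(x',2\xi')$ are both in $B_U$ then $\Gamma^b_{x',\xi'}|_V$ and $\Gamma^b_{x',2\xi'}|_V=2\,\Gamma^b_{x',\xi'}|_V$ would both have to equal $B_V$, which is impossible for a nonempty discrete set. The missing ingredient is exactly the second half of the paper's proof: fix a section of $S^*\partial M$ via an auxiliary Riemannian metric, observe that the union of wavefront sets over a small conic neighborhood $N_1$ of $(x',\xi')$ decomposes into disjoint neighborhoods $N_2$ each corresponding to a fixed reflection order $k$, introduce the unknown scaling $\lambda:N_1\to\mathbb{R}^+$ homogeneous of degree $0$, and then invoke Lemma~\ref{lemma: lens relation determined by directions}, which shows that requiring the graph of the $k$-th lens relation to be Lagrangian (the symplectic form vanishes) forces $\lambda\equiv 1$ for the correct scaling. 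Without this step you only construct the weak lens relation on $S^*\partial M$, not on $T^*\partial M$, and Remark~\ref{rem: weak lens relation determines k-th lens relation} (which the later sections rely on) would not be available.

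One smaller imprecision: your claim that ``successive reflections occur over distinct base points in $V$'' is not needed and not quite right (a broken geodesic may return to the same boundary point); what matters, and what does hold, is that distinct reflections give distinct covectors, hence distinct rays, so their wavefront contributions are microlocally disjoint and cannot cancel.
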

\begin{proof}
    We first consider $\tilde{L}$ as a map on the spherical bundle $S^*\partial M = (T^*\partial M \backslash 0) / \mathbb{R}^+$, that is, we recover the direction relations. As mentioned before, we can view wavefront set as a subset of $S^*\partial M$. For any $(x', \xi') \in S^*\partial M$, set some $f$ such that $WF(f) = \{(x', \xi')\}$. By Lemma \ref{lemma: propagation of singularity}, $WF(\Lambda^{U,V}_{g,A,q}f)$ can only be one of the following three cases:
    \begin{enumerate}
        \item $(x', \xi') \in \mathcal{E}$, $WF(\Lambda^{U,V}_{g,A,q}f)\ = \emptyset$;
        \item $(x', \xi') \in \mathcal{H}$, $WF(\Lambda^{U,V}_{g,A,q}f) = \emptyset$ if the corresponding forward broken bicharacteristic does not pass through $V$, or a discrete set of points if it passes through $V$;
        \item $(x', \xi') \in \mathcal{G}$, $WF(\Lambda^{U,V}_{g,A,q}f) = \emptyset$ or the union of several continuous curves.
    \end{enumerate}
    We collect all $(x', \xi')$ with a discrete set of singularities, these are the hyperbolic directions in $S^*U$ whose corresponding forward broken bicharacteristic passes through $V$. Label the corresponding $f$ as $f_{x', \xi'}$, and define $\tilde{L}$ on $S^*\partial M$ via
    \[
    \tilde{L}^{-1}(WF(\Lambda^{U,V}_{g,A,q}f_{x', \xi'})) = \{(z', \zeta') \in S^*U: WF(\Lambda^{U,V}_{g,A,q}f_{z', \zeta'}) = WF(\Lambda^{U,V}_{g,A,q}f_{x', \xi'})\}.
    \]
    We have thus determined $\tilde{L}$ as a map on $S^*\partial M$.

    Pick some arbitrary Riemannian metric on $T^* \partial M$, so that $S^*\partial M$ has a natural section given by covectors of length 1. For each $(x', \xi')$ in case (2) of length 1, let $(y', \eta') \in WF(\Lambda^{U,V}_{g,A,q}f_{x', \xi'})$ be of length 1. We know $(x', \xi')$ is connected with $(y', \lambda \eta')$ for some $\lambda > 0$ by the forward broken bicharacteristic. To find out what $\lambda$ is, choose a sufficiently small conic neighborhood $N_1 \subset T^*U$ of $(x', \xi')$. Then the union of $WF(\Lambda^{U,V}_{g,A,q}f_{z', \zeta'})$ for $(z', \zeta') \in N_1$ forms disjoint neighborhoods around points in $WF(\Lambda^{U,V}_{g,A,q}f_{x', \xi'})$, denote the one around $(y', \eta')$ as $N_2 \subset T^*V$. When $N_1$ is sufficiently small, and $(y', \lambda \eta')$ is the $k$-th reflection of $(x', \xi')$, then points in $N_2$ are the $k$-th reflection of points in $N_1$. We can thus extend $\lambda$ to be a function on $N_1$ homogeneous of degree 0 in phase variable. By Lemma \ref{lemma: lens relation determined by directions}, $\lambda: N_1 \to \mathbb{R}^+$ can be computed by requiring the graph of lens relation to be a canonical relation (that is, symplectic form vanishes).
\end{proof}

\begin{lem}\label{lemma: lens relation determined by directions}
    If $L:N_1 \to N_2$ is a lens relation for $N_1, N_2 \subset T^*\partial M \cap \mathcal{H}$ conic subsets, and $L': N_1' \to N_2'$ its corresponding map on $S^*\partial M$ via $L'([x, \xi]) = [L(x, \xi)]$, then $L$ and $L'$ uniquely determines each other.
\end{lem}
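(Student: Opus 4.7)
The forward implication $L \Rightarrow L'$ is immediate from the definition $L'([x',\xi']) = [L(x',\xi')]$, so the substance is to recover $L$ from $L'$. My plan is to combine two structural properties of $L$---its degree-one homogeneity in the fiber variable and the fact that its graph is a canonical relation in $T^*V \times T^*U$ (with twisted symplectic form $\omega_V - \omega_U$)---to pin down the positive scalar factor left ambiguous after projection to $S^*\partial M$.

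The first step is to record homogeneity. Since the principal symbol $p = -g^{jk}\xi_j\xi_k$ is homogeneous of degree $2$ in $\xi$, a direct rescaling of $H_p$ shows that $\phi_s(x, r\xi) = (\pi_M \phi_{rs}(x,\xi),\, r\,\pi_{T^*}\phi_{rs}(x,\xi))$ for $r > 0$. Projecting to the boundary then gives $L(x', r\xi') = (y'(x',\xi'),\, r\,\eta'(x',\xi'))$, so $y'$ is homogeneous of degree $0$ and $\eta'$ of degree $1$. Knowing $L'$ therefore determines $y'(x',\xi')$ and the direction of $\eta'(x',\xi')$; after fixing a smooth section of $S^*\partial M$ in $T^*\partial M \setminus 0$ (say unit covectors with respect to an auxiliary Riemannian metric), the remaining ambiguity is a single positive function $\lambda(x',\xi')$ with $L(x',\xi') = (y',\lambda\,\eta'_0)$.

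The key step is to show that any homogeneous canonical relation preserves the Liouville $1$-form, i.e.\ $L^*\alpha_V = \alpha_U$ with $\alpha = \xi'_j\, dx'^j$. Set $\beta := L^*\alpha_V - \alpha_U$. Because $L$ is canonical (the graph is Lagrangian---this is already used in Section~\ref{sec: DN map} and rests on strict null-convexity forcing transversal intersections at $\partial M$), $d\beta = L^*\omega_V - \omega_U = 0$, so $\beta$ is closed. Because $L$ is homogeneous of degree $1$, the Euler vector field $E = \xi'_j \partial_{\xi'_j}$ satisfies $L_* E_U = E_V$, and combined with $\mathcal{L}_{E_V}\alpha_V = \alpha_V$ this yields $\mathcal{L}_E \beta = \beta$. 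Combined with $\mathcal{L}_E \beta = d\,\iota_E \beta$ (since $\beta$ is closed) and the identity $\iota_E \alpha \equiv 0$ on any cotangent bundle, one obtains $\beta = 0$.

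Uniqueness of $\lambda$ is then immediate. If $L_1$ and $L_2$ are two lens relations with the same projection $L'$, then $L_i^* \alpha_V = \lambda_i\, (\eta'_0)_j\, dy'^j(x',\xi')$ must both equal $\alpha_U$, which is nowhere zero on $\mathcal{H}$ since $\xi' \neq 0$. Dividing gives $\lambda_1 \equiv \lambda_2$, so $L_1 = L_2$. The main subtlety I anticipate is the Liouville-preservation step: one must be careful that the canonical identity $L^*\omega_V = \omega_U$ and the homogeneity identity $L_* E_U = E_V$ are valid throughout the conic domain where $L$ is defined (and not merely pointwise at the base of $\mathcal{H}$). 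Modulo this check, the argument is either routine or inherited from the setup of the preceding sections.
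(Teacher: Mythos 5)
Your proof is correct, but it takes a genuinely different route from the one in the paper. Both proofs rely on exactly the same two structural facts---that the graph of a lens relation is a canonical relation (Lagrangian for the twisted symplectic form), and that the lens relation is homogeneous of degree one in the fiber variable---but they extract the conclusion in different ways.

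The paper's proof is more computational and self-contained: it forms the composition $T = L_1^{-1}\circ L_2$, which is a symplectomorphism fixing the base variable and rescaling the fiber by a degree-$0$ homogeneous function $\phi$, then explicitly computes the pairing of tangent vectors to the graph of $T$ against the twisted symplectic form $d\xi_k\wedge dx^k - d\eta_k\wedge dy^k$. Imposing that this pairing vanishes and summing over the coordinate indices yields $n(-1+\phi) + \nabla_\xi\phi\cdot\xi = 0$, and Euler's identity for a degree-$0$ homogeneous function kills the radial derivative term, forcing $\phi\equiv 1$. Your proof instead isolates the invariant fact that any degree-$1$ homogeneous symplectomorphism of an open cone in $T^*\partial M$ preserves the tautological (Liouville) one-form $\alpha$: canonicity makes $\beta := L^*\alpha_V - \alpha_U$ closed, and homogeneity gives $L_*E_U = E_V$, so $\iota_{E_U}\beta = \alpha_V(E_V) - \alpha_U(E_U) = 0$ and Cartan's formula then yields $\beta = d\iota_{E_U}\beta = 0$. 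Since $\alpha_U$ is nowhere vanishing on $\mathcal{H}$, the identity $L_i^*\alpha_V = \alpha_U$ pins down the positive scaling factor, giving uniqueness. Your route is cleaner and coordinate-free, and it packages a standard structural lemma (preservation of the Liouville form by homogeneous canonical transformations) in a way that could be reused; the paper's route avoids appealing to that lemma at the cost of an explicit coordinate calculation. One small point worth making explicit in your write-up: the step ``one obtains $\beta = 0$'' actually needs $\iota_{E_U}\beta = 0$, which requires both $\iota_{E_U}\alpha_U = 0$ (which you cite) and $\iota_{E_U}(L^*\alpha_V) = \alpha_V(L_*E_U) = \alpha_V(E_V) = 0$ (which uses the homogeneity identity $L_*E_U = E_V$ a second time); this is implicit in what you wrote but should be spelled out.
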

\begin{proof}
    Suppose $L'$ is given, but there exists $L_1$ and $L_2$ such that both define $L'$. Then there exists $\phi: N_1 \to \mathbb{R}^+$ with $\phi(x, k\xi) = \phi(x, \xi)$ for $k > 0$ such that $L_1(x, \xi) = (y, \eta)$, $L_2(x, \xi) = (y, \phi(x, \xi)\eta)$. Use the fact that lens relation is invertible and homogeneous of degree 1 on phase variable,
    \[
    T : = L_1^{-1} \circ L_2: N_1 \to N_1,  (x, \xi) \mapsto (x, \phi(x, \xi)\xi).
    \]
    $L_1$ and $L_2$ are symplectomorphisms, so $T$ is also a symplectomorphism. In particular, use coordinate $(x, \xi, y, \eta)$ for the graph, the tangent vector of the graph at some $(x, \xi, x, \phi(x, \xi)\xi)$ consists of
    \[
    X_i = \partial_{x^i}+\partial_{y^i}+\sum_j \frac{\partial \phi}{\partial x^i} \partial_{\eta_j}, \quad Y_i = \partial_{\xi_i}+\phi \partial_{\eta_i}+\sum_j \frac{\partial \phi}{\partial \xi_i} \xi_j \partial_{\eta_j}.
    \]
    Then we have
    \[
    (d\xi_k \wedge dx^k - d\eta_k \wedge dy^k)(X_i, Y_i) = -1 + \phi + \frac{\partial \phi}{\partial \xi_i}\xi_i = 0,
    \]
    add up all $i$ gets
    \[
    n(-1+\phi) + \nabla_{\xi}\phi \cdot \xi = 0.
    \]
    But $\phi$ is homogeneous of degree 0 on $\xi$, meaning $\nabla_{\xi} \phi \cdot \xi = 0$, so $\phi \equiv 1$. In other words, $L$ can be explicitly computed from $L'$ by the requirement that symplectic form vanishes. The other direction is trivial.
\end{proof}

\begin{rem}\label{rem: weak lens relation determines k-th lens relation}
    Note that during the proof of Proposition \ref{prop: same weak lens relation}, we do not need to know how small $N_1$ should be, since we are essentially only using the pushforward of $T_{(x',\xi')}(T^*\partial M)$ via the $k$-th lens relation at $(x', \xi')$. In particular, the weak lens relation gives the $k$-th lens relation on sufficiently small $N_1$, this fact will be used later. We shall emphasize that we would not be able to know the exact value of $k$ just from weak lens relation.
\end{rem}

We can then use the weak lens relation to recover the conformal class of the boundary metric on the recoverable set.
\begin{prop}\label{prop: same conformal class on boundary}

    Given $\Lambda^{U,V}_{g,A,q}$, then the recoverable set $\tilde{U} \cup \tilde{V}$ can be determined. Moreover, the conformal class of the boundary metric $\bar{g} = g|_{T\partial M \times T\partial M}$ can be constructed on $\tilde{U} \cup \tilde{V}$.

\end{prop}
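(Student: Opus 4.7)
The plan is to use the weak lens relation $\tilde L$ from Proposition \ref{prop: same weak lens relation} together with Lemma \ref{lemma: broken bichar converge to gliding on boundary} to identify the gliding rays visible to the data, and then to reconstruct the null cones of $\bar{g}$ at each recoverable boundary point via the rigidity of a Lorentzian quadric under an open subset of its null directions.

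First I would identify the gliding rays visible to the data. By Proposition \ref{prop: same weak lens relation} the weak lens relation $\tilde L: B_U \to B_V$ is known, so one knows exactly which hyperbolic $(x',\xi') \in T^*U \cap \mathcal H$ have broken bicharacteristics reaching $V$, together with the resulting reflection sets $\tilde L(x',\xi') \subset T^*V$. For $(x',\xi') \in T^*U \cap \mathcal G$, I approach it by a sequence of hyperbolic $(x'_j,\xi^{j\prime}) \to (x',\xi')$ with $(x'_j,\xi^{j\prime}) \in B_U$; Lemma \ref{lemma: broken bichar converge to gliding on boundary} then ensures that the union $\bigcup_j \tilde L(x'_j,\xi^{j\prime})$ accumulates precisely on the portion of the gliding ray $\Gamma^{gl}_{x',\xi'}$ lying in $T^*V$. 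Consequently $(x',\xi')$ is a recoverable glancing covector exactly when this accumulation set is nonempty, and in that case I simultaneously recover the intersection of the gliding ray with $T^*V$. The symmetric procedure applied to sequences in $T^*V$ with backward broken bicharacteristics hitting $U$ produces the backward recoverable gliding rays.

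By Lemma \ref{lemma: gliding rays are tame}, each recovered gliding ray projects to a null geodesic of $\bar{g}$ on $\partial M$. Using the characterization
\[
\tilde U = \Bigl(\bigcup_{q\in V}\mathcal L^-_{\partial M}(q)\Bigr)\cap U, \qquad \tilde V = \Bigl(\bigcup_{p\in U}\mathcal L^+_{\partial M}(p)\Bigr)\cap V,
\]
the recoverable set $\tilde U \cup \tilde V$ is read off as the set of boundary points lying on some recovered null geodesic, with the requirement that both glancing directions are recoverable verified separately when $n=3$. To reconstruct the conformal class at $x' \in \tilde U$, observe that as $q$ varies over the open set $\{q\in V: x' \in \mathcal L^-_{\partial M}(q)\}$, the tangent directions at $x'$ of the recovered null geodesics from $x'$ to $q$ form an open subset $W$ of the null cone $\{v \in T_{x'}\partial M : \bar g(v,v)=0\}$. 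Since a nondegenerate Lorentzian quadratic form on $T_{x'}\partial M$ is determined up to a positive scalar by its zero locus, and the zero locus of a nonzero quadratic polynomial is uniquely determined by any of its open subsets, $W$ uniquely determines the conformal class of $\bar{g}$ at $x'$; applying this pointwise over $\tilde U \cup \tilde V$ recovers the conformal class on the recoverable sets.

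The main obstacle I expect is ensuring that the recovered null directions at each $x'$ genuinely form an open subset of the null cone, rather than a lower-dimensional accumulation. This openness should follow from the openness of $V$ together with the continuous and stable dependence of gliding rays on initial data, but one must carefully track the limiting procedure in Lemma \ref{lemma: broken bichar converge to gliding on boundary}: a small perturbation of the glancing direction $(x',\xi')$ must yield correspondingly small changes in the recovered gliding ray endpoints in $V$, so that sweeping $q$ across an open subset of $V$ truly translates into sweeping out an open subset of the null cone at $x'$.
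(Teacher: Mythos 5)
Your proposal follows essentially the same route as the paper: pass from the weak lens relation to recoverable gliding rays via limits of broken bicharacteristics (Lemma \ref{lemma: broken bichar converge to gliding on boundary}), then reconstruct each null cone from the recovered null directions using the rigidity of a Lorentzian quadratic form under an open subset of its zero locus (the paper phrases this as ``analyticity of the light cone''). The main strategic choice and every key lemma match.

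Two small remarks. First, your setup has a mild circularity: you begin by fixing $(x',\xi')\in T^*U\cap\mathcal G$, but $\mathcal G$ is not known a priori --- it is precisely what determines the conformal class. The paper avoids this by taking limits of arbitrary covectors in the domain of $\tilde L$ and running a case analysis that simultaneously identifies that the limit is glancing and that its gliding ray reaches $V$: hyperbolic limits not in the domain have empty accumulation (else they would already be in the domain), so a nonempty accumulation set detects a recoverable glancing covector from the data alone. Second, the openness concern you flag at the end is exactly what the paper addresses, and it is resolved by the continuous dependence of gliding rays on their initial data in $\mathcal G$ (so that if a gliding ray reaches $V$, so do all nearby ones, giving an open set of recoverable directions at each $x'$), combined with the explicit $n=3$ versus $n>3$ dichotomy built into the definition of a recoverable point. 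With those two points tightened, your argument coincides with the paper's.
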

\begin{proof}
    By Proposition \ref{prop: same weak lens relation}, $\tilde{L}$ can be constructed, we will use it to construct $\tilde{U}, \tilde{V}$ and the glancing set on $\tilde{U} \cup \tilde{V}$. Note that once the glancing set is constructed, it is the light cone of $\bar{g} = g|_{T\partial M \times T\partial M}$, so the conformal class of $\bar{g}$ is determined on recoverable sets.

    By definition, the domain and range of $\tilde{L}$ are subsets of $\mathcal{H}$, so any limiting point stays in $\mathcal{H} \cup \mathcal{G}$. By abuse of notation, we say $(x', \xi') \in T^*U$ is in domain (range) of $\tilde{L}$, if there exists some $B_U \ni (x', \xi')$ ($B_V \ni (x', \xi')$) in the domain of $\tilde{L}$ (range of $\tilde{L}$), and we denote $\tilde{L}(x', \xi') = \tilde{L}(B_U)$ ($\tilde{L}^{-1}(x', \xi') = \tilde{L}^{-1}(B_V)$). Pick a sequence of $(x'_j, \xi'_j)$ in domain of $\tilde{L}$ that converges to some $(x', \xi')$ not in domain of $\tilde{L}$, and let $B_j = \tilde{L}(x'_j, \xi'_j)$ and $\Gamma^b_j$ the broken bicharacteristic from $(x'_j, \xi'_j)$ (certainly we only consider those limit such that $\xi' \neq 0$). Note that either $(x', \xi') \in \mathcal{H}$, and $\Gamma^b_j$ converges uniformly on bounded time interval to $\Gamma^b$, which is the forward broken bicharacteristic from $(x', \xi')$, by smoothness of Hamiltonian vector field; or $(x', \xi') \in \mathcal{G}$, and $\Gamma^b_j$ converges uniformly on bounded time interval to $\Gamma^{gl}$, the forward gliding ray from $(x', \xi')$, by Lemma \ref{lemma: uniform convergence of broken bichar}. By Proposition \ref{prop: broken null-geod are tame}, broken bicharacteristics are all tame and have finitely many boundary points in any compact set. In the first case, the limiting points of $B_j$ can only be $\Gamma^b|_V$, which must be empty, because otherwise $(x', \xi')$ would be in the domain of $\tilde{L}$; in the second case, the limiting points of $B_j$ can be $\Gamma^{gl}|_V$, which is non-empty and forms a continuous set of points if and only if $\Gamma^{gl}$ passes through $V$. Conversely, if $(x', \xi') \in \mathcal{G}$ and the corresponding gliding ray passes through $V$, then by uniform convergence of broken bicharacteristic in Lemma \ref{lemma: broken bichar converge to gliding on boundary}, any sufficiently close hyperbolic point would be in the domain of $\tilde{L}$. As a result, we can identify all the $(x', \xi') \in \mathcal{G}$ whose forward gliding ray passes through $V$. In particular, this means we can recover $\tilde{U}$. Similarly we can also recover $\tilde{V}$ using $\tilde{L}^{-1}$.

    Moreover, by assumption of recoverable point, if dimension is 2+1 we can recover 2 such directions in the glancing set at each recoverable point, which is the entire glancing set; in higher dimensions, if a gliding ray passes through $V$ then all nearby gliding rays also pass through $V$, so we recover an open set of directions in glancing set for each recoverable point, which then recovers the entire glancing set at the point by analyticity of light cone.
\end{proof}


\section{Upgrade Weak Lens Relation to Lens Relation}\label{sec: recover lens relation}

Weak lens relation can be upgraded to lens relation when the boundary is strictly null-convex and when the conformal class on the boundary is determined. From Proposition \ref{prop: same weak lens relation} and Proposition \ref{prop: same conformal class on boundary}, we may assume $\tilde{L}, \tilde{U}, \tilde{V}$ and the conformal class of $\bar{g}|_{\tilde{U}\cup \tilde{V}}$ are given.
Also as a result of knowing the conformal class, $\mathcal{G}, \mathcal{H}$ and $\mathcal{E}$ are given on $\tilde{U} \cup \tilde{V}$.

\begin{prop}\label{prop: upgrade weak lens relation}
    Let $L$ be the (future-pointing) lens relation for $g$. Suppose $\tilde{L}, \tilde{U}, \tilde{V}$, and the conformal class of $\bar{g} = g|_{T(\tilde{U} \cup \tilde{V}) \times T(\tilde{U} \times \tilde{V})}$ are given (thus $\mathcal{G}|_{\tilde{U}\cup\tilde{V}}$ is also determined).
    Then for any recoverable direction $(x', \xi') \in \mathcal{G}|_{\tilde{U} \cup \tilde{V}}$, there exists a conic neighborhood $W'$ of $(x', \xi')$, $W = W' \cap \mathcal{H}$, such that $L|_W$ can be determined up to time orientation. That is, one can construct some $L'$ on $W$ such that
    \[
    L' = L|_W \quad \text{or} \quad L' = L^{-1}|_W.
    \]

\end{prop}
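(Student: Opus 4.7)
The plan is to exploit the fact that close to the glancing direction $(x_0',\xi_0')$ every hyperbolic broken bicharacteristic in a small cone around $(x_0',\xi_0')$ shadows the forward gliding ray $\Gamma^{gl}_{x_0',\xi_0'}$, whose boundary projection is already determined as an unparameterized curve by the conformal class of $\bar g$. This projected gliding null-geodesic will act as a geometric ruler that linearly orders the reflection covectors in $B_U$, and it will turn out that the immediate successor of $(y',\eta')$ in this ordering is precisely $L(y',\eta')$, with the choice of orientation along the ruler accounting for the sole time-orientation ambiguity.

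I treat the case $(x_0',\xi_0')\in\mathcal{G}$ with $x_0'\in\tilde U$; the case $x_0'\in\tilde V$ is symmetric via the backward gliding ray. By Lemma \ref{lemma: gliding rays are tame}, the projection $\gamma^{gl}_0\subset\partial M$ of $\Gamma^{gl}_{x_0',\xi_0'}$ is a $\bar g$-null-geodesic and is already known from the conformal class on $\tilde U\cup\tilde V$; by the definition of $\tilde U$ this curve enters $V$. I then shrink the conic neighborhood $W'$ of $(x_0',\xi_0')$, with $W=W'\cap\mathcal{H}$, until the following hold for every $(y',\eta')\in W$: (i) $y'\in U$, because $U$ is open and $x_0'\in U$; (ii) by Lemma \ref{lemma: uniform convergence of broken bichar}, the forward broken bicharacteristic $\Gamma^b_{y',\eta'}$ stays uniformly close to $\Gamma^{gl}_{x_0',\xi_0'}$ on a bounded time interval containing at least one reflection in $V$, so in particular $\tilde L(y',\eta')\neq\emptyset$; (iii) since the time between consecutive reflections shrinks to zero as $(y',\eta')\to(x_0',\xi_0')$, as exploited in the proof of Lemma \ref{lemma: broken bichar converge to gliding on boundary}, both $L(y',\eta')$ and $L^{-1}(y',\eta')$ lie over $U$ and so belong to $T^*U\cap\mathcal{H}$.

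For such $(y',\eta')$, I extract $B_V:=\tilde L(y',\eta')$ and $B_U:=\tilde L^{-1}(B_V)$. By the definition of the weak lens relation, $B_U$ is precisely the set of reflection covectors of $\Gamma^b_{y',\eta'}$ lying in $T^*U$, and condition (iii) guarantees that $\{(y',\eta'),L(y',\eta'),L^{-1}(y',\eta')\}\subset B_U$. After fixing an arbitrary auxiliary Riemannian metric on $\partial M$, I order the base points of $B_U$ by the parameter value of their nearest-point projection onto $\gamma^{gl}_0$, and I pick one of the two orientations of $\gamma^{gl}_0$. The $L$-iterates lying in $B_U$ are strictly monotone in this induced order, because by (ii) the broken null-geodesic advances monotonically in the causal future along a curve uniformly close to $\gamma^{gl}_0$. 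Thus the immediate successor of $y'$ in this order is $L(y',\eta')$, while the opposite orientation yields $L^{-1}(y',\eta')$. Patching the orientation consistently over $W$ defines the desired map $L'$, with $L'=L|_W$ or $L'=L^{-1}|_W$; Lemma \ref{lemma: lens relation determined by directions} then ensures that the magnitudes of the output covectors are automatically pinned down once the directions are known.

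The main technical obstacle I anticipate is establishing the monotonicity underlying the use of the ruler: I need the transverse fluctuation of $\gamma^b_{y',\eta'}$ about $\gamma^{gl}_0$ to remain strictly smaller than the longitudinal separation between consecutive reflection points on $\gamma^{gl}_0$, so that nearest-point projection to $\gamma^{gl}_0$ respects the bicharacteristic order and ``immediate successor'' is unambiguous. I expect to secure this by further shrinking $W'$ and combining the bounded-time uniform convergence of Lemma \ref{lemma: uniform convergence of broken bichar} with the quantitative shrinking of inter-reflection times extracted from the proof of Lemma \ref{lemma: broken bichar converge to gliding on boundary}.
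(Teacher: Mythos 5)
Your strategy is close in spirit to the paper's but substitutes a different ``ruler'': you order the reflection covectors in $B_U$ by the parameter of their nearest-point projection onto the boundary null-geodesic $\gamma^{gl}_0$, whereas the paper instead constructs a local temporal function $t$ on $\partial M$ (by choosing a spacelike hypersurface $N$ through $x'$ and solving an ODE, which is possible once the conformal class of $\bar g$ — hence the boundary light cone — is known) and orders by the value of $t$. This difference is not cosmetic: any two points of $B_U$ are causally comparable, and a temporal function \emph{automatically} respects causal ordering, so the paper's ordering coincides with the bicharacteristic ordering with no further work. The nearest-point-projection ordering you propose does \emph{not} automatically respect causal ordering, and this is exactly where your gap lies.

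You correctly identify the issue yourself: you would need the longitudinal advance between consecutive reflection points of $\gamma^b_{y',\eta'}$ along $\gamma^{gl}_0$ to dominate the transverse fluctuation, uniformly over the neighborhood. However, both quantities shrink to zero as $(y',\eta')\to(x',\xi')$, and the uniform-convergence results (Lemma \ref{lemma: uniform convergence of broken bichar}, Lemma \ref{lemma: broken bichar converge to gliding on boundary}) only give qualitative convergence, not a quantitative comparison of the two rates. Without such a comparison, consecutive reflection points could project to nearly coincident parameter values in the wrong order, and ``immediate successor'' would be ill-defined. So as written, the step asserting that ``the $L$-iterates lying in $B_U$ are strictly monotone in this induced order'' is not justified. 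The fix is to replace your geometric ruler with a temporal function, as the paper does; then the ordering question reduces to the tautology that a temporal function is monotone along causal curves, and the only remaining ambiguity is the global sign of $t$, which is precisely the time-orientation ambiguity in the statement. A minor additional remark: the final appeal to Lemma \ref{lemma: lens relation determined by directions} is superfluous here, since the hypothesis already gives the full weak lens relation $\tilde L$ on $T^*\partial M$, magnitudes included; that lemma is used earlier, inside the proof of Proposition \ref{prop: same weak lens relation}, to promote a map on $S^*\partial M$ to one on $T^*\partial M$.
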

\begin{proof}
    Assume $x' \in \tilde{U}$, similar argument works for $\tilde{V}$. By assumption, the forward gliding ray passes through $V$. Lemma \ref{lemma: broken bichar converge to gliding on boundary} shows that there exists a conic neighborhood $\tilde{W}$ of $(x', \xi')$ such that the forward broken bicharacteristics from $\tilde{W} \cap \mathcal{H}$ passes through $V$. We will shrink $\tilde{W}$ when necessary, and determine the lens relation on the hyperbolic part of a subset $W' \subset \tilde{W}$, $(x', \xi') \in W'$.

    Smoothly pick $\partial_t \in T\partial M$ around $x'$ such that it is timelike, this also guarantees that $\partial_t$ has consistent time orientation, which is possible since the light cone has been determined. Consider a smooth hypersurface $N \subset \partial M$ containing $x'$ such that $TN$ is spacelike (again possible because light cone has been determined). Locally around $x'$ one can solve an ODE and obtain a smooth temporal function $t$ around $x'$ such that $t|_N = 0$, and we can shrink $\tilde{W}$ such that $t$ is defined in a neighborhood of the projection of $\tilde{W}$ onto $\partial M$. We emphasize that $\partial_t$ is timelike, but we do not know if it is future-pointing or past-pointing. By Lemma \ref{lemma: uniform convergence of broken bichar}, by choosing $W'$ conic neighborhood of $(x', \xi')$ sufficiently small, for any $(y', \eta') \in W' \cap \mathcal{H}$, $L(y', \eta')$ and $L^{-1}(y', \eta')$ must be in $\tilde{W} \subset T^*\tilde{U}$ (here we also use the fact that for directions sufficiently close to a glancing vector, time spent in the interior is close to 0, see the proof of Lemma \ref{lemma: broken bichar converge to gliding on boundary} (1)). Moreover, its forward broken bicharacteristic passes through $V$, so there exists a unique subset $B \subset T^*U$ such that $(y', \eta') \in B \in \text{Domain}(\tilde{L})$. Since $L(y', \eta') \in T^*\tilde{U}$, $L(y', \eta') \in B$ by definition of weak lens relation, same for $L^{-1}(y', \eta')$. Note that any two points in $B$ must satisfy that one of them is in the causal future of the other, which means points in $B$ can be ordered by this. This order can be recovered up to reversing by the value of $t$ from smallest to largest, and $(y', \eta')$ must be in between $L(y', \eta')$ and $L^{-1}(y', \eta')$. If we consistently order them from smallest to largest for any such $(y', \eta')$, we would be able to recover either $L$ or $L^{-1}$.
\end{proof}

We then use the following theorem to determine the jet bundle.
\begin{thm}[\cite{stefanov2024boundary} Theorem 2.1]
    Let $g$ and $\hat{g}$ be two Lorentzian metrics defined near some $x_0 \in \partial M$ so that $\hat{g} = \mu_0 g$ on $T\partial M \times T\partial M$ locally with some $0 < \mu_0 \in C^\infty(\partial M)$. Let $(x_0, v_0) \in T\partial M$ be lightlike for $g$. Assume that $\partial M$ is strictly convex with respect to $g$ in the direction of $(x_0,v_0)$. Assume that either
    \begin{itemize}
        \item[(i)] $\hat{\mathcal{S}}^\# = \mathcal{S}^\#$ in a neighborhood of $(x_0, v_0^b)$, or
        \item[(ii)] $\hat{\mathcal{S}} = \mathcal{S}$ in a neighborhood of $(x_0, v_0)$, and $\mu_0 = \text{const}$.
    \end{itemize}
    Then there exists $\mu(x) > 0$ with $\mu = \mu_0$ on $\partial M$, and a local diffeomorphism $\psi$ near $\partial M$ preserving it pointwise, so that the jets of $g$ and $\mu \psi^*\hat{g}$ coincide on $\partial M$ near $x_0$.
\end{thm}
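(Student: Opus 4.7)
The plan is to fix gauges so both metrics take the same normal form near $x_0$, and then to exploit the scattering relation at glancing directions to match their jets inductively.

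\textbf{Gauge reduction.} By Lemma 2.3 of \cite{Stefanov2018}, I can choose a boundary-fixing diffeomorphism $\psi$ so that both $g$ and $\psi^*\hat{g}$ lie in semi-geodesic normal coordinates $(x', x^n)$ with respect to the same $x^n$-foliation. In these coordinates both metrics satisfy $g_{in} = \delta_{in}$, so the task reduces to finding $\mu$ with $\mu|_{\partial M} = \mu_0$ such that $\partial_n^k g_{\alpha\beta}|_{x^n=0}$ matches $\partial_n^k(\mu\,\psi^*\hat{g})_{\alpha\beta}|_{x^n=0}$ for all $k \geq 0$ and $1 \leq \alpha, \beta \leq n-1$.

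\textbf{Glancing expansion.} The next ingredient is a Taylor expansion of near-glancing null-geodesics. Perturbing $(x_0, v_0^\flat)$ into the hyperbolic set by a small parameter $\epsilon$, strict null-convexity gives $H_p^2\phi < 0$ (cf.\ Lemma \ref{lemma: glancing is gliding}), so the associated null-bicharacteristic for $p = -g^{ij}\xi_i\xi_j$ leaves $\partial M$ transversally, spends time of order $\epsilon$ in $M^\circ$, and returns to $\partial M$. Solving the Hamilton system in powers of $\epsilon$ gives expansions of the exit point and the exit covector. The order-$\epsilon^k$ coefficient is an explicit polynomial in $\partial_n^j g_{\alpha\beta}|_{\partial M}$ for $j \leq k$, whose top-order dependence on $\partial_n^k g_{\alpha\beta}$ is proportional to a nonzero power of the second fundamental form — this is the place where strict null-convexity is used quantitatively.

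\textbf{Inductive matching.} Induct on $k$. The hypothesis $\hat{g} = \mu_0 g$ on $T\partial M \times T\partial M$ settles $k = 0$, by choosing $\mu|_{\partial M} = \mu_0$. Assuming all jets up to order $k-1$ agree, I subtract the order-$\epsilon^k$ coefficients of the two scattering expansions: the assumption $\mathcal{S} = \hat{\mathcal{S}}$ (or $\mathcal{S}^\# = \hat{\mathcal{S}}^\#$) produces a linear equation for $\partial_n^k g_{\alpha\beta} - \partial_n^k(\mu\,\psi^*\hat{g})_{\alpha\beta}$ on $\partial M$. In case (ii), with $\mu$ constant, this system pins down the $k$-th jet difference uniquely; in case (i), the conformal slack in $\mathcal{S}^\#$ versus $\mathcal{S}$ is exactly one scalar degree of freedom per jet order, which is absorbed by a consistent choice of $\partial_n^k \mu$ at each step. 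The principal obstacle, and the technical heart of the argument, is verifying that the order-$k$ linear system is nondegenerate modulo the admissible gauges: that is, showing the map from the $k$-th normal jet of $g$ (modulo conformal rescaling in case (i)) to the order-$\epsilon^k$ part of the glancing exit map is injective. This nondegeneracy reduces, after a careful linearization of the Hamilton flow along the tangential null-geodesic emanating from $(x_0, v_0)$, to the nonvanishing of a leading Jacobi-field coefficient, which is guaranteed by the hypothesis of strict null-convexity at $(x_0, v_0)$.
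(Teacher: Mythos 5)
This theorem is cited directly from \cite{stefanov2024boundary} (Theorem 2.1) and is not proved in the present paper; the authors simply invoke it after Proposition \ref{prop: upgrade weak lens relation}. So there is no proof in the paper to compare your attempt against.

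That said, your sketch does reproduce the standard architecture that such boundary jet determination arguments follow: semi-geodesic gauge reduction (Lemma \ref{lem: semi-geodesic}), an expansion of the near-glancing lens relation in a small parameter, and a layer-stripping induction on the normal jet order. What you label the ``technical heart''---the nondegeneracy of the order-$\epsilon^k$ linear map from the $k$-th normal jet to the expansion coefficients of the lens relation---is indeed where all the content lives, and your proposal only asserts it. In the Lorentzian case it is subtler than your description suggests: at each order the lens relation effectively returns the quadratic form $\partial_n^k g_{\alpha\beta}\,\xi^\alpha\xi^\beta$ only on \emph{lightlike} covectors $\xi'$, and a symmetric bilinear form is determined by its restriction to the null cone only modulo a multiple of $\bar{g}_{\alpha\beta}$. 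This one-dimensional kernel is exactly what the conformal gauge freedom $\mu$ absorbs in case (i), and what forces the extra hypothesis $\mu_0 = \text{const}$ in case (ii). Making this counting rigorous, and combining the exit-point and exit-covector data correctly, is the work that \cite{stefanov2024boundary} does and that your outline skips. If you want to verify your reconstruction, you have to consult that reference directly; the present paper does not contain the argument.
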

The $\mathcal{S}^{\sharp}$ in the theorem is the lens relation $L$. Moreover, this deterministic result is independent of whether the lens relation is future-pointing or past-pointing.

\section{Determine Conformal Factor}\label{sec: determine conformal factor}

Suppose both $(g_1, A_1, q_1)$ and $(g_2, A_2, q_2)$ have the same DN map $\Lambda^{U,V}$, by Proposition \ref{prop: same conformal class on boundary} $\bar{g}_1$ and $\bar{g}_2$ share the same $\tilde{U}, \tilde{V}$ and conformal class on $T(\tilde{U} \cup \tilde{V})$. Then for some fixed $h$ in the same conformal class as $\bar{g}_1$ and $\bar{g}_2$, we have $\bar{g}_1 = e^{\varphi_1}h$, $\bar{g}_2 = e^{\varphi_2}h$. From equation \eqref{eq: conformal factor relation}, if a forward broken null-geodesic goes from $z' \in \tilde{U}$ to $x' \in \tilde{V}$, then
\[
(n-2)\varphi_1(z') - n\varphi_1(x') = (n-2)\varphi_2(z') - n\varphi_2(x').
\]
In other words, $\bar{g}_1 =  e^{\varphi}\bar{g}_2$ with $\varphi = \varphi_1 - \varphi_2$, for any forward broken null-geodesic connecting $z' \in U$ and $x' \in V$,
\[
(n-2)\varphi(z') = n\varphi(x').
\]
We will show that this condition actually implies $\varphi$ is locally constant on $\tilde{U} \cup \tilde{V}$. When dimension of $M$ is 3, the main idea is to use light cone in tangent space to approximate the light cone formed by boundary null-geodesics.

\begin{prop}\label{prop: dimension 3 conformal factor locally constant}
    If $M$ has dimension 3, and for any forward null-geodesic from some $z' \in U$ to $x' \in V$, $\varphi$ satisfies $(n-2)\varphi(z') = n\varphi(x')$, then $\varphi$ is locally constant on $\tilde{U} \cup \tilde{V}$.
\end{prop}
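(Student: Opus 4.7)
The plan is to exploit that in dimension $n=3$ the boundary $(\partial M,\bar g)$ is a two-dimensional Lorentzian manifold, so double null coordinates $(u,v)$ are available and the forward null-geodesics are exactly the coordinate lines $\{u=\text{const}\}$ and $\{v=\text{const}\}$ oriented toward the future. Since the projection of any gliding ray is a null-geodesic in $(\partial M,\bar g)$ by Lemma \ref{lemma: gliding rays are tame}, and gliding rays are uniform limits of broken null-bicharacteristics by Lemma \ref{lemma: broken bichar converge to gliding on boundary}, the hypothesis passes to the limit, so for $n=3$ one has
\[
\varphi(z')=3\,\varphi(x')\qquad\text{whenever }z'\in U,\ x'\in V\text{ lie on a common forward null-geodesic in }(\partial M,\bar g).
\]

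To prove local constancy on $\tilde U$, I would fix $z'_0=(u_0,v_0)\in\tilde U$ and work in double null coordinates centered there. By the dimension-three definition of a recoverable point, both forward null-geodesics from $z'_0$ meet $V$, and by the openness of $V$ together with smooth dependence of geodesics on the base point, there is a connected neighborhood $N\subset\tilde U$ of $z'_0$ such that for every $z'=(u,v)\in N$ the two intervals
\[
I_1(u,v):=\{v'>v:(u,v')\in V\},\qquad I_2(u,v):=\{u'>u:(u',v)\in V\}
\]
are non-empty. Applying the displayed identity along the first forward ray gives $\varphi(u,v)=3\varphi(u,v')$ for every $v'\in I_1(u,v)$, and since the left-hand side does not depend on $v'$, for $v_1<v_2$ with both $(u,v_1),(u,v_2)\in N$ the inclusion $I_1(u,v_2)\subset I_1(u,v_1)$ together with $I_1(u,v_2)\neq\emptyset$ forces $\varphi(u,v_1)=3\varphi(u,v')=\varphi(u,v_2)$ for any common $v'$ in the intersection. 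Thus $\varphi|_N$ is independent of $v$, and the symmetric argument via $I_2$ shows it is also independent of $u$, so $\varphi|_N$ is constant.

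For the $\tilde V$ side I would run the same mechanism in reverse: if $x'_0\in\tilde V$, both backward null-geodesics from $x'_0$ reach $U$ in dimension three, and for any two points $x'_0,x'$ on the same $u$-line in a small neighborhood there is a common $z'\in U$ whose forward null-geodesic passes through both, so $\varphi(x')=\varphi(z')/3=\varphi(x'_0)$; iterating along both null foliations yields local constancy of $\varphi$ on a neighborhood in $\tilde V$. The main obstacle I expect is the topological bookkeeping that guarantees $I_1,I_2$ (and the common $z'$ in the dual argument) depend continuously on $(u,v)$ and that the overlap used in the rigidity step is genuinely non-empty on a full two-dimensional neighborhood; this reduces to showing that the open condition ``both forward null-geodesics from $z'$ enter $V$'' is stable under small perturbations of $z'$, which is exactly what the openness of $V$ and the convergence statement in Lemma \ref{lemma: broken bichar converge to gliding on boundary} provide. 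Once this is in place, the rigidity is essentially automatic, because the left-hand side of the constraint does not depend on which point of the forward null ray in $V$ is chosen.
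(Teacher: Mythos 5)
Your proof is correct and rests on the same two pillars as the paper's: first, passing the hypothesis from broken null-geodesics to gliding rays (via Lemma~\ref{lemma: broken bichar converge to gliding on boundary}) to conclude that $\varphi$ satisfies the conformal-factor relation along the $U$- and $V$-portions of each boundary null-geodesic that hits both sets; second, exploiting that in a two-dimensional Lorentzian boundary the null curves form two transversal foliations. Where you diverge is in how you implement the second step. The paper fixes a geodesically convex neighborhood, linearizes the null-geodesics to ``crosses'' (pairs of straight lines through each point), observes that nearby crosses always intersect, and transfers this back to the actual null-geodesics by a uniform-approximation argument. You instead pass to double null coordinates $(u,v)$, in which the two null foliations are exactly the coordinate lines; you then show $\varphi$ is constant along each $v$-line and each $u$-line by noting that two nested points on the same null ray share a common image point $w\in V$, forcing $\varphi(u,v_1)=3\varphi(w)=\varphi(u,v_2)$, and the rectangle structure of the $(u,v)$-grid gives local constancy. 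Your route trades the approximation argument for the (standard, local) existence of double null coordinates, which makes the ``intersection of null geodesics'' step automatic rather than an asymptotic statement; this is slightly cleaner, at the cost of invoking a coordinate choice that is specific to $1+1$ Lorentzian geometry, whereas the paper's linearization would in principle port to other low-dimensional settings. One notational caveat: writing $I_1(u,v)=\{v'>v:(u,v')\in V\}$ suggests the double null chart reaches $V$, which it need not since $U\cap V=\emptyset$; the intended object is the preimage of $V$ under the extension of the forward null-geodesic from $(u,v)$, parametrized compatibly, and with that reading the containment $I_1(u,v_2)\subset I_1(u,v_1)$ for $v_1<v_2$ holds because both points lie on the same inextendible null-geodesic. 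With that clarification, the argument is complete.
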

\begin{proof}
    The assumption gives the following result: if $\tilde{L}$ is the weak lens relation and $\tilde{L}(B_U) = B_V$, then $\varphi|_{\pi(B_U)} = \frac{c}{n-2}$ and $\varphi|_{\pi(B_V)} = \frac{c}{n}$ for some constant $c$, where $\pi: T^*\partial M \to \partial M$. For $x' \in \tilde{U} \cup \tilde{V}$, let $\gamma^{gl}$ be the projection of a gliding ray that passes through $U, V$ and $x'$. By Lemma \ref{lemma: broken bichar converge to gliding on boundary} (1), for any $z' \in \gamma^{gl} \cap U$, there exists a sequence of broken null-geodesic with initial point $x'$ such that a sequence of boundary points converge to $z'$, so $\varphi(x') = \varphi(z')$. Similarly, for any $y' \in \gamma^{gl} \cap V$, $(n-2)\varphi(x') = n\varphi(y')$. Thus, $\varphi|_{\gamma^{gl} \cap U} = \frac{c}{n-2}$ and $\varphi|_{\gamma^{gl} \cap V} = \frac{c}{n}$ for some constant $c$.

    Locally focus on a small connected neighborhood of some point $x' \in W \subset \tilde{U}$. In dimension 3, by the definition of recoverable point, for any $z' \in W$, both glancing rays of $z'$ passes through $V$. Note that the glancing rays are the same for two metrics in $W$ since they are conformally equivalent. Use the fact that the projection of glancing ray is the null-geodesic of boundary metric $\bar{g}_1$ and $\bar{g}_2$, we thus convert the assumption to $\varphi$ being constant along any boundary null-geodesic. To show $\varphi$ is locally constant at $x'$, suffice to show for any $y'$ sufficiently close to $x'$, a boundary null-geodesic of $y'$ intersects with a boundary null-geodesic of $x'$ in $W$.

    We shall use the light cone in the tangent space to approximate the light cone generated by geodesics. Consider some local coordinate $\phi: W \to N \subset \mathbb{R}^2$, $\phi(x') = 0$, and $h = (\phi^{-1})^*\bar{g}_1$ on $N$. Let $N' \subset N$ be a small neighborhood of 0 such that the null-geodesic of $h$ for any point $p \in N'$ is uniformly close to the straight line passing through $p$ in the same direction in $N$. Then the two null-geodesics of $p$ are uniformly approximated by a cross formed by two straight lines centered at $p$. It is clear that when $N'$ is sufficiently small, the cross centered at $p$ intersects with the cross centered at 0; since the approximation is uniform and the geodesics are connected, we have the union of two null-geodesics from $p$ would intersect the union of two null-geodesics from 0. This proves that for any $y'$ sufficiently close to $x'$, the null-geodesics intersect so $\varphi(y') = \varphi(x')$, hence $\varphi$ is locally constant on $\tilde{U}$. The proof for points in $\tilde{V}$ is the same. Clearly, if $x' \in U$ is connected to $y' \in V$ by a forward null-geodesic, then the constant $c_1$ around $x'$ and the constant $c_2$ around $y'$ need to satisfy $(n-2)c_1 = nc_2$.
\end{proof}
When the dimension is higher, for every point in the recoverable set, we have an open set of recoverable directions in $\mathcal{G}$. We can not simply use the same argument as in dimension 3, because the set of null-geodesics do not naturally form a cross anymore.

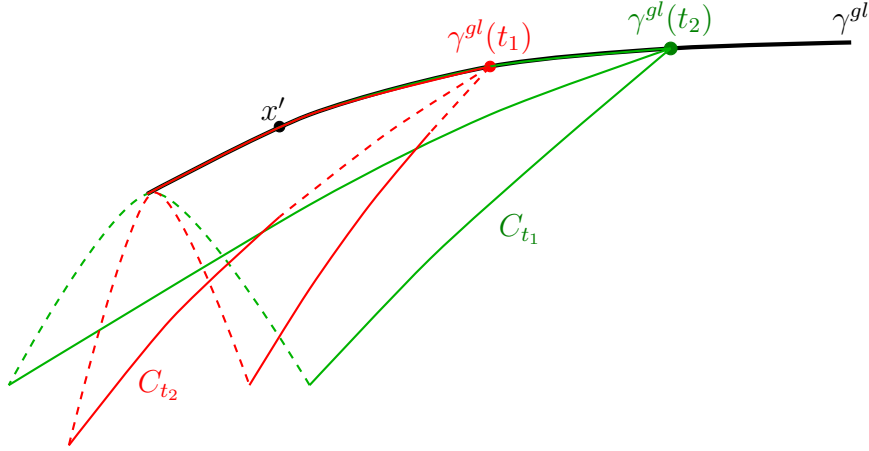
\begin{figure}[h]

    \begin{tikzpicture}[scale=8]

        \draw[ultra thick, black] plot[smooth] coordinates
        {(0.13,0.118) (0.4,0.25) (0.7,0.33) (1.0,0.36) (1.3,0.37)};
        \node[black] at (1.3,0.41) {$\gamma^{gl}$};

        \filldraw[red] (0.7,0.33) circle (0.25pt);
        \node[red] at (0.7,0.38) {$\gamma^{gl}(t_1)$};

        \filldraw[green!50!black] (1.0,0.36) circle (0.3pt);
        \node[green!50!black] at (1.0,0.41) {$\gamma^{gl}(t_2)$};

        \filldraw[black] (0.35,0.23) circle (0.25pt);
        \node at (0.34,0.26) {$x'$};

        \draw[thick, green!70!black] plot[smooth] coordinates
        {(0.13,0.118) (0.4,0.25) (0.7,0.33) (1.0,0.36)};

        \draw[thick, green!70!black] plot[smooth] coordinates
        {(1.0,0.36) (0.7,0.25) (0.4,0.1) (-0.1, -0.2)};

        \draw[thick, green!70!black] plot[smooth] coordinates
        {(1.0,0.36) (0.8,0.19) (0.6,0.01) (0.4, -0.2)};

        \draw[thick, green!70!black, dashed] plot[smooth] coordinates
        {(0.4, -0.2) (0.14,0.12) (-0.1, -0.2)};
        \node[green!50!black] at (0.75,0.06) {$C_{t_1}$};

        \draw[thick, red] plot[smooth] coordinates
        {(0.13,0.118) (0.4,0.25) (0.7,0.33)};
        \draw[thick, red, dashed] plot[smooth] coordinates
        {(0.7,0.33) (0.53,0.22) (0.35,0.08)};
        \draw[thick, red] plot[smooth] coordinates
        {(0.35,0.08) (0.17,-0.09) (0,-0.3)};

        \draw[thick, red, dashed] plot[smooth] coordinates
        {(0.7,0.33) (0.6, 0.22)  };

        \draw[thick, red] plot[smooth] coordinates
        {(0.6, 0.22)  (0.5, 0.1)  (0.4, -0.04) (0.3, -0.2)};
        \node[red] at (0.15,-0.2) {$C_{t_2}$};

        \draw[thick, red, dashed] plot[smooth] coordinates
        {(0,-0.3) (0.14,0.12) (0.3, -0.2)};

    \end{tikzpicture}
    \caption{Dimension at least 4: in $\tilde{U}$, we use that fact that $\varphi$ would stay constant along any piecewise boundary null-geodesic curve where each piece of boundary null-geodesic when fully extended passes through $V$. We show that the union of such paths from $x'$ has non-empty interior. Same argument works for $\tilde{V}$.}
    \centering
\end{figure}

\begin{prop}\label{prop: dimension 4 and higher conformal factor locally constant}
    If $M$ has dimension at least 4, and for any forward null-geodesic from some $z' \in U$ to $x' \in V$ satisfies $(n-2)\varphi(z') = n\varphi(x')$, then $\varphi$ is locally constant on $\tilde{U} \cup \tilde{V}$.
\end{prop}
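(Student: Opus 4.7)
My plan is to adapt the argument of Proposition \ref{prop: dimension 3 conformal factor locally constant} by exploiting the additional dimensions available in the null cone when $n\geq 4$. The first step, identical to that proof, is to combine the hypothesis with Lemma \ref{lemma: broken bichar converge to gliding on boundary}(1) to conclude that on every projected gliding ray $\gamma^{gl}$ passing through both $U$ and $V$, $\varphi$ is constant on $\gamma^{gl}\cap U$, and that if two such gliding rays meet at a common point in $U$ the two constants must agree. Consequently $\varphi$ is preserved along any finite concatenation of segments of such ``good'' boundary null-geodesics. By symmetry (the backward version applies to $\tilde V$), it suffices to show that, for any $x'\in\tilde U$, the set of points reachable from $x'$ by such concatenations contains an open neighborhood of $x'$.

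The new ingredient for $n\geq 4$ is dimension counting. First I verify that the property ``the forward gliding ray through $(y',\xi')\in\mathcal{G}$ meets $V$'' is open in $(y',\xi')$, by continuous dependence of the gliding-ray flow on initial data together with openness of $V\subset\pM$; combined with $x'\in\tilde U$ this yields an open cone $\mathcal{O}\subset T_{x'}\pM$ of recoverable null vectors. The null cone in the $(n-1)$-dimensional Lorentzian tangent space $T_{x'}\pM$ has dimension $n-2\geq 2$, and any proper linear subspace of $T_{x'}\pM$ intersects the null cone in a set of strictly lower dimension; hence no non-empty open piece of the null cone is contained in a proper subspace. I can therefore select $v_1,\dots,v_{n-1}\in\mathcal{O}$ forming a basis of $T_{x'}\pM$.

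Next I extend each $v_i$ to a smooth null vector field $V_i$ on a boundary neighborhood of $x'$ and form the composite flow
\[
F(\epsilon_1,\dots,\epsilon_{n-1}) := \Phi^{n-1}_{\epsilon_{n-1}}\circ\cdots\circ\Phi^1_{\epsilon_1}(x'),
\]
where $\Phi^i_\epsilon(y')$ is the boundary null-geodesic issued from $y'$ in direction $V_i(y')$ at parameter $\epsilon$. A direct computation yields $\partial_{\epsilon_i}F(0)=v_i$, so $dF|_0$ is invertible and the inverse function theorem gives an open neighborhood of $x'$ in the image of $F$. For $\epsilon$ sufficiently small every intermediate base point and direction stays inside the open recoverable set, so each segment of the resulting broken boundary null-geodesic extends to a gliding ray reaching $V$; by the first paragraph, $\varphi$ is constant on each segment, and matching at the junctions forces $\varphi\equiv\varphi(x')$ on the entire image of $F$. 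The analogous construction with backward gliding rays through $U$ handles $\tilde V$, and the relation between the constants on connected components of $\tilde U$ and $\tilde V$ joined by a forward null-geodesic is read off from the hypothesis. The main obstacle is precisely the spanning step in the second paragraph, which fails when $n=3$ since the null cone degenerates to two lines and cannot yield two linearly independent recoverable directions; this is exactly the degeneracy that forces the separate light-cone-intersection argument in Proposition \ref{prop: dimension 3 conformal factor locally constant}.
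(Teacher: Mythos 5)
Your proof is correct, but it takes a genuinely different route from the paper's. The paper fixes a geodesically convex neighborhood $W$ of $x'$ in $(\pM,\bar g)$, chooses one recoverable gliding ray $\gamma^{gl}$ through $x'$, and considers the one-parameter family $C_t$ of boundary light-cone pieces issued from $\gamma^{gl}(t)$ in recoverable directions. A geodesic-convexity argument (no two distinct points of $W$ joined by a null geodesic can also be joined by a different piecewise null causal path) shows $C_{t_1}\cap C_{t_2}=\gamma^{gl}$ for $t_1\neq t_2$, and a continuity/dimension-count sweep then shows $\mathcal{C}(x')=\cup_t C_t$ has non-empty interior and $\varphi$ is constant on it; a final perturbation in $x'$ gives local constancy. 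You instead observe that the set of recoverable glancing directions at $x'$ is an open subset of the null cone of $T_{x'}\pM$ and hence (a linear-algebraic fact valid precisely because the null cone of an $(n-1)$-dimensional Lorentzian space with $n-1\geq 3$ cannot have an open piece confined to a proper subspace) contains a basis $v_1,\dots,v_{n-1}$; you then form a composite boundary-null-geodesic flow $F$ and invoke the inverse function theorem, since $dF|_0(e_i)=v_i$. This avoids the geodesic-convexity/uniqueness step entirely, replacing the continuity-sweep with a clean IFT argument that produces the open neighborhood directly. Both arguments correctly locate the obstruction in $n=3$: in the paper's picture $C_t$ degenerates to a single line, in yours the null cone reduces to two rays and cannot span $T_{x'}\pM$. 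The one point worth making explicit in your write-up is the joint openness in $(y',\xi')$ of the condition ``forward gliding ray through $(y',\xi')$ meets $V$'': this follows from Lemma \ref{lemma: gliding rays are tame} (gliding rays are $\bar g$-null geodesics, a smooth flow defined for all time) together with openness of $V$, which you do cite but is doing real work at every junction of your concatenated path.
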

\begin{proof}
Again the first part is the same as in dimension 3, we have for any boundary null-geodesic $\gamma^{gl}$ passing through both $U$ and $V$, $\varphi|_{\gamma^{gl} \cap U} = \frac{c}{n-2}$ and $\varphi|_{\gamma^{gl} \cap V} = \frac{c}{n}$ for some constant $c$.

From now on, we look at the boundary manifold $(\partial M, \bar{g})$, for example when we say null-geodesic we refer to the geodesic in the boundary with respect to $\bar{g}$. Fix some $x' \in \tilde{U}$, there exists some $(x', \xi')$ recoverable direction. We will be working in a sufficiently small geodesically convex neighborhood $W$ of $x'$, and use the fact that any two points in $W$ that are connected by a null-geodesic can not be connected by a piecewise lightlike path with fixed time orientation that is not a null-geodesic (because otherwise they would be connected by a timelike geodesic).


Let $\gamma^{gl}$ be a null-geodesic passing through $x'$ corresponding to a recoverable direction. Denote $C_t$ the union of inextendible null-geodesics in $W$ that passes through $\gamma^{gl}(t)$, and will pass through $V$ when fully extended, denote $\mathcal{C}(x') := \cup C_t$. For $t_1 \neq t_2$, suppose there exists some $y' \in C_{t_1} \cap C_{t_2}$ such that $y' \notin \gamma^{gl}$. Then any two points of $y'$, $\gamma^{gl}(t_1)$ and $\gamma^{gl}(t_2)$ are connected by some null-geodesic. The three are obviously not on the same null-geodesic since $y' \notin \gamma^{gl}$, so it contradicts geodesic convexity of $W$. As a result, we have for any $t_1 \neq t_2$, $C_{t_1} \cap C_{t_2}  = \gamma^{gl}$ is of dimension 1. On the other hand, we know $C_t$ is an $n-2$ dimensional submanifold for all $t \neq 0$, and it changes continuous as $t$ varies. Thus as $t$ goes from some $t_1$ to $t_2$, $C_t$ changes continuously from an $n-2$ dimensional submanifold to another $n-2$ dimensional submanifold such that at any two distinct time, the two submanifolds only intersect at $\gamma^{gl}$. This implies $\mathcal{C}(x')$ has non-empty interior (since $\partial M$ has dimension $n-1$). Moreover, $\varphi$ is constant along $\gamma^{gl}$ and on each $C_t$, so $\varphi|_{\mathcal{C}(x')}$ is constant.

We can similarly construct $\mathcal{C}(z')$ for any $z'$ in a neighborhood of $x'$. Note that $\mathcal{C}(z')$ changes continuously and has non-empty interior. Hence for any $z'$ sufficiently close to $x'$, $\mathcal{C}(z') \cap \mathcal{C}(x') \neq \emptyset$, implying $\varphi$ is constant around $x'$. Same proof works for points in $\tilde{V}$.

\end{proof}

\section{Determine 1-Form}\label{sec: determine 1-form}

Now we use \eqref{eq: 1-form relation} to recover the 1-form on $\tilde{U} \cup \tilde{V}$, assuming the conformal class of $\bar{g}$ has been recovered on $\tilde{U} \cup \tilde{V}$. As in the proof of Proposition \ref{prop: same conformal class on boundary}, for every recoverable direction $(x', \xi') \in T^*\tilde{U} \cap \mathcal{G}$, one can identify its corresponding forward gliding ray $\Gamma^{gl}$ on both $T^*U$ and $T^*V$ using weak lens relation (for details, see proof of the proposition). Denote $\gamma^{gl}$ the projection of $\Gamma^{gl}$, let $\phi: [-1, 1] \to \gamma^{gl}$ be a parametrization of $\gamma^{gl}$ around $x'$ such that $\phi(0) = x'$ and $\phi([-1, 1]) \subset \tilde{U}$. Since the conformal class of $\bar{g}$ is already recovered, one can recover $(\xi')^{\sharp}$ up to a positive scaling. This means we can determine the direction of $\dot{\gamma}^{gl}(0)$, since up to positive rescaling, $\dot{\gamma}^{gl}(0) = -2 (\xi')^{\sharp}$, see \eqref{eq: symbol of P} and Section \ref{subsec: geometry}. We may thus assume $\phi'(0)$ is in the same direction as $\dot{\gamma}^{gl}(0)$, otherwise just reverse the parametrization $\phi$. We show that for any $t \in [-1, 1]$, we can explicitly construct
\[
\exp \left( i\int_0^t A(\phi'(s))ds \right).
\]

Fix some $t$, let $z' = \phi(t)$ and $(z', \zeta')$ the corresponding point on $\Gamma^{gl}$. By Proposition \ref{prop: upgrade weak lens relation}, the lens relation $L$ can be recovered up to time orientation on a conic neighborhood of glancing set. Then for each point of $\Gamma^{gl}((x', \xi') \to (z', \zeta'))$, we can find neighborhood of uniform size on which $L$ is known. Let $(x'_j, \xi'_j) \in T^*U \cap \mathcal{H}$ such that $(x'_j, \xi'_j) \to (x', \xi')$, by Lemma \ref{lemma: uniform convergence of broken bichar}, the forward broken bicharacteristic $\Gamma^b_j$ from $(x'_j, \xi'_j)$ converges uniformly to $\Gamma^{gl}$ on bounded time interval. By Lemma \ref{lemma: broken bichar converge to gliding on boundary} (1), we can find a sequence of $(z'_j, \zeta'_j)$ on $\Gamma^b_j$ such that $(z'_j, \zeta'_j) \to (z', \zeta')$. As a result, for $j$ sufficiently large, all the boundary points of $\Gamma^b_j$ between $(x'_j, \xi'_j)$ and $(z'_j, \zeta'_j)$ are in the neighborhood where $L$ is known up to time orientation. That is, let $k_j$ be such that $(z'_j, \zeta'_j) = L^{k_j}(x'_j, \xi'_j)$, then one can recover $k_j$ up to a sign determined by time orientation.

Denote $(y'_j, \eta'_j)$ some arbitrary point on $\Gamma^b_j \cap T^*V$, by equation \eqref{eq: 1-form relation} we know
\[
\exp \left( i\int_{\gamma^b_j(x'_j \to y'_j)} A(\dot{\gamma}^b_j) \right) = \pm i (-1)^{l_j}e^{i\theta_j}, \quad \exp \left( i\int_{\gamma^b_j(z'_j \to y'_j)} A(\dot{\gamma}^b_j) \right) = \pm i (-1)^{m_j}e^{i\omega_j}
\]
where $|l_j - m_j| = |k_j|$ is known, $e^{i\theta_j}$ and $e^{i\omega_j}$ are the argument terms that can be explicitly computed. In particular, this means we can explicitly compute the quotient
\[
\exp \left( i\int_{\gamma^b_j(x'_j \to z'_j)} A(\dot{\gamma}^b_j) \right) = (-1)^{|k_j|}e^{i(\theta_j - \omega_j)}.
\]
Note that the uncertainty caused by time orientation is eliminated after taking the quotient. By Lemma \ref{lemma: uniform convergence of broken bichar}, the following limit converges and can be computed:
\[
\lim_{j \to \infty}(-1)^{|k_j|}e^{i(\theta_j - \omega_j)} = \exp \left( i\int_{\gamma^{gl}(x' \to z')} A(\dot{\gamma}^{gl}) \right) = \exp \left( i\int_0^t A(\phi'(s))ds \right).
\]
Since $t \in [-1, 1]$ is arbitrary, we can then take derivative and obtain $A(\phi'(0))$. Moreover, we can compute this for any gliding ray that goes from $U$ to $V$. By assumption of recoverable set, this means 2 gliding rays for dimension 3, and an open set of glancing directions for higher dimensions. In both cases, one can construct $A(v)$ for a set of $v$ that spans the entire $T_{x'}\tilde{U}$. The construction for points in $\tilde{V}$ is the same, in particular we proved the following result.

\begin{prop}\label{prop: determine 1-form}
    Given the DN map $\Lambda^{U,V}_{g,A,q}$, let $T\tilde{U} \cup T\tilde{V} \subset T\partial M$ be the set of tangential vectors on $\tilde{U} \cup \tilde{V}$, then $A|_{T\tilde{U} \cup T\tilde{V}}$ can be computed.
\end{prop}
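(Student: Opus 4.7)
The plan is to invoke the principal symbol formula \eqref{eq: 1-form relation} from Proposition \ref{prop: principal symbol formula}, which encodes the line integral of $A$ along broken null-geodesics in the argument of the principal symbol of $\Lambda^{U,V}_{g,A,q}$, and then to pass to the gliding limit using Lemma \ref{lemma: uniform convergence of broken bichar} and Lemma \ref{lemma: broken bichar converge to gliding on boundary}. Having recovered the conformal class of $\bar{g}$ on $\tilde{U}\cup\tilde{V}$ in Proposition \ref{prop: same conformal class on boundary}, the glancing set, and hence the direction of $\dot{\gamma}^{gl}$ associated with each recoverable $(x',\xi')\in\mathcal{G}$, is intrinsic. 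Concretely, for $x'\in\tilde{U}$ and a recoverable direction $(x',\xi')$, I would parametrize the projection $\gamma^{gl}$ of the corresponding forward gliding ray by $\phi\colon[-1,1]\to\tilde{U}$ with $\phi(0)=x'$ and $\phi'(0)$ parallel to $\dot{\gamma}^{gl}(0)=-2(\xi')^\sharp$, and aim to construct $\exp\!\bigl(i\int_0^t A(\phi'(s))\,ds\bigr)$ for every $t\in[-1,1]$.

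The main steps, in order: first pick a sequence of hyperbolic covectors $(x'_j,\xi'_j)\to(x',\xi')$ with forward broken bicharacteristics $\Gamma^b_j$; by Lemma \ref{lemma: broken bichar converge to gliding on boundary}(1), for each $t$ there exist reflection points $(z'_j,\zeta'_j)$ of $\Gamma^b_j$ converging to the point $(z',\zeta')$ on $\Gamma^{gl}$ above $z'=\phi(t)$. Second, choose any boundary point $(y'_j,\eta'_j)\in\Gamma^b_j\cap T^*V$, which exists for $j$ large by Lemma \ref{lemma: broken bichar converge to gliding on boundary} together with the recoverability assumption. Applying \eqref{eq: 1-form relation} twice, to $\gamma^b_j(x'_j\to y'_j)$ and to $\gamma^b_j(z'_j\to y'_j)$, produces two expressions of the form $\pm i(-1)^{l_j}e^{i\theta_j}$ and $\pm i(-1)^{m_j}e^{i\omega_j}$ whose arguments $\theta_j,\omega_j$ are computable from the principal symbol of $\Lambda^{U,V}_{g,A,q}$. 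Dividing and using the uniform convergence of $\Gamma^b_j$ to $\Gamma^{gl}$, the quotient converges to $\exp\!\bigl(i\int_{\gamma^{gl}(x'\to z')}A(\dot{\gamma}^{gl})\bigr)$.

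Once this limit is established for every $t$, differentiating in $t$ at $t=0$ recovers $A(\phi'(0))$. Repeating for all recoverable directions at $x'$, the definition of recoverable point gives at least two independent glancing directions when $n=3$ and an open subset of glancing directions when $n\geq 4$; in either case the resulting velocity vectors span $T_{x'}\partial M$, so $A|_{T_{x'}\partial M}$ is determined. The construction at points of $\tilde{V}$ is symmetric, using backward gliding rays.

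The main obstacle is the sign and parity ambiguity $\pm i(-1)^k$ in \eqref{eq: 1-form relation}: the integer $k$ counting reflections along $\Gamma^b_j$ cannot be read off the DN map directly, and the sign depends on the unknown time orientation. Both ambiguities are resolved by the quotient trick: the sign cancels entirely in the ratio, while $l_j-m_j$ differs from the number $k_j$ of reflections between $(x'_j,\xi'_j)$ and $(z'_j,\zeta'_j)$ only by a fixed integer, and by Proposition \ref{prop: upgrade weak lens relation} the lens relation $L$ is known up to time orientation on a uniform conic neighborhood of the gliding ray, so $|k_j|$ is computable. A subsidiary point to check carefully is that the relevant boundary points $(y'_j,\eta'_j)$ all lie in the region where $\tilde{L}$ and hence $L^{\pm 1}$ are available, which is guaranteed by choosing $t$ in a bounded interval and invoking the uniform convergence on compact sets.
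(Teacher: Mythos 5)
Your proposal is correct and follows essentially the same route as the paper's proof: extract $\exp\bigl(i\int_{\gamma^b_j(x'_j\to z'_j)}A\bigr)$ as a quotient of two applications of \eqref{eq: 1-form relation}, use Proposition~\ref{prop: upgrade weak lens relation} to pin down $|k_j|$, pass to the gliding limit via Lemmas~\ref{lemma: uniform convergence of broken bichar} and~\ref{lemma: broken bichar converge to gliding on boundary}, and differentiate in the gliding-ray parameter to recover $A$ on a spanning set of tangential directions. The only slight imprecision is your phrasing that $l_j - m_j$ ``differs from $k_j$ by a fixed integer''; the paper uses the sharper identity $|l_j - m_j| = |k_j|$, but this does not affect the argument since in both cases the parity $(-1)^{|k_j|}$ is what enters the quotient.
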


\section{Proof of Main Theorems}\label{sec: proof of main theorems}

\begin{proof}[Proof of Theorem \ref{theorem: reconstruction}:]
    By Proposition \ref{prop: same weak lens relation}, one can construct the weak lens relation $\tilde{L}$. Then by Proposition \ref{prop: same conformal class on boundary}, the recoverable sets $\tilde{U} \cup \tilde{V}$ can be determined, and the conformal class of the boundary metric $\bar{g} = g|_{T\partial M \times T\partial M}$ can be constructed on $\tilde{U} \cup \tilde{V}$. Moreover, by Proposition \ref{prop: upgrade weak lens relation}, we can recover the lens relation $L$ up to time orientation in a neighborhood of the glancing set. Finally, by Proposition \ref{prop: determine 1-form}, $A|_{T(\tilde{U}\cup \tilde{V})}$ can be constructed from the principal symbol of $\Lambda^{U,V}_{g,A,q}$.
\end{proof}

\begin{proof}[Proof of Theorem \ref{theorem: determination}:]
    By Theorem \ref{theorem: reconstruction}, the two metrics share the same recoverable sets $\tilde{U}$ and $\tilde{V}$, the same conformal class on recoverable sets, and the same lens relation around glancing sets up to time orientation. By \cite[Theorem 2.1]{stefanov2024boundary}, this is enough to determine the jet bundle of the metric on the recoverable sets, because choice of the time orientation does not affect the metric. Finally, by Proposition \ref{prop: dimension 3 conformal factor locally constant} and Proposition \ref{prop: dimension 4 and higher conformal factor locally constant}, the conformal factor satisfies the relation stated in the theorem.
\end{proof}

We now consider the case where the metric on $U$ or $V$ is a priori given. In this case, we show that the boundary metric can be explicitly constructed on a larger set than the recoverable set.
\begin{proof}[Proof of Theorem \ref{theorem: reconstruction with a priori info}:]
    First assume the boundary metric $\bar{g}$ is given on $\tilde{U}_0$. Let $y' \in \tilde{V}_0$, recall that by definition this means there exists some $x' \in \tilde{U}_0$ and $\gamma^b$ broken null-geodesic such that $x'$ and $y'$ are connected by $\gamma^b$. Then there exists some $\Gamma^b$ broken null-bicharacteristic such that the projection of $\Gamma^b$ is $\gamma^b$, and $\Gamma^b$ connects $(x', \xi')$ and $(y', \eta')$ for some $\xi' \in T^*_{x'}\partial M$ and $\eta' \in T^*_{y'}\partial M$. By Proposition \ref{prop: principal symbol formula}, choose some $f$ whose wavefront set contains $(x', \xi')$, the absolute value of the principal symbol gives
    \[
    |\bar{g}(y')|_{y'}^{-1/4}|\eta_n|^{1/2} = |\bar{g}(x')|_{x'}^{-1/4}|\xi_n|^{-1/2}\frac{\sigma_{y'}(\Lambda^{U,V}_{g,A,q}f)|_{(y', \eta')}}{(L^{-k})^*\sigma_{x'}(f)|_{(x', \xi')}},
    \]
    where
    \[
    |\xi_n| = \sqrt{-\bar{g}^{\alpha\beta}\xi_{\alpha}\xi_{\beta}} = (-\bar{g}(\xi', \xi'))^{1/2}, \quad |\eta_n| = \sqrt{-\bar{g}^{\alpha\beta}\eta_{\alpha}\eta_{\beta}} = (-\bar{g}(\eta', \eta'))^{1/2}.
    \]
    Note that the right hand side is fully computable since we are given $\bar{g}$ on $\tilde{U}_0$. Thus we can compute
    \[
    C(y', \eta') := |\bar{g}(y')|_{y'}^{-1}\bar{g}(\eta', \eta')
    \]
    for any $(y', \eta')$ such that the corresponding backward broken null-bicharacteristic passes through $U$. In particular, if this holds for $(y', \eta')$, then it holds for a conic neighborhood of $(y', \eta')$. Note that for any $(y', \zeta')$ in that open set, they share the same $|\bar{g}(y')|_{y'}$. Choose a basis $\zeta^1, \cdots, \zeta^{n-1}$ in the open neighborhood, we may assume the determinant $|\bar{g}(y')|_{y'}$ is with respect to this basis. Since $\zeta^{\alpha}+\zeta^{\beta}$ is also in the neighborhood for any $\alpha, \beta = 1 , \dots, n-1$. We thus have
    \[
    C(y', \zeta^{\alpha}) = |\bar{g}(y')|_{y'}^{-1}\bar{g}(\zeta^\alpha, \zeta^\alpha), \quad C(y', \zeta^\alpha+\zeta^\beta) = |\bar{g}(y')|_{y'}^{-1}\bar{g}(\zeta^\alpha+\zeta^\beta, \zeta^\alpha+\zeta^\beta).
    \]
    The above information is enough for us to construct the matrix
    \[
    C(y') := |\bar{g}(y')|^{-1}_{y'}\bar{g}^{-1}(y').
    \]
    As a result, for all $n \geq 3$, the metric at $y'$ can be explicitly computed as
    \[
    \bar{g}(y') = \left(\det(C(y'))^{-\frac{1}{n}}C(y')\right)^{-1}.
    \]

    The same method works when the metric is given on $\tilde{V}_0$. Note that in this case we can similarly construct
    \[
    C(x') := |\bar{g}(x')|_{x'}\bar{g}^{-1}(x'),
    \]
    and so the metric is given by
    \[
    \bar{g}(x') = \left( \det(C(x'))^{\frac{1}{n-2}} C(x') \right)^{-1}.
    \]
    Finally, the fact that time orientation can be determined is because $(y', \eta')$ and $(x', \xi')$ have the same time orientation if they are connected by a broken bicharacteristic.
\end{proof}

\bibliographystyle{abbrv}
\bibliography{main.bib}

\begin{thebibliography}{10}

\bibitem{alexakis2022}
S.~Alexakis, A.~Feizmohammadi, and L.~Oksanen.
\newblock {L}orentzian {C}alder{\'o}n problem under curvature bounds.
\newblock {\em Inventiones mathematicae}, 229(1):87--138, 2022.

\bibitem{alexakis2024lorentzian}
S.~Alexakis, A.~Feizmohammadi, and L.~Oksanen.
\newblock {L}orentzian {C}alder{\'o}n problem near the {M}inkowski geometry.
\newblock {\em Journal of the European Mathematical Society}, 2024.

\bibitem{alinhac1983non}
P.~S. Alinhac.
\newblock Non-unicit{\'e} du probl{\`e}me de {C}auchy.
\newblock {\em Annals of Mathematics}, 117(1):77--108, 1983.

\bibitem{Melrose1977}
K.~G. Andersson and R.~B. Melrose.
\newblock The propagation of singularities along gliding rays.
\newblock {\em Invent. Math.}, 41(3):197--232, 1977.

\bibitem{andersson1996boundary}
L.~Andersson, M.~Dahl, and R.~Howard.
\newblock Boundary and lens rigidity of {L}orentzian surfaces.
\newblock {\em Transactions of the American Mathematical Society}, 348(6):2307--2329, 1996.

\bibitem{SaBUhlWan}
A.~S. Barreto, G.~Uhlmann, and Y.~Wang.
\newblock Inverse scattering for critical semilinear wave equations.
\newblock {\em Pure Appl. Anal.}, 4(2):191--223, 2022.

\bibitem{SaBWun}
A.~S. Barreto and J.~Wunsch.
\newblock The radiation field is a fourier integral operator.
\newblock {\em Ann. Inst. Fourier (Grenoble)}, 55(1):213--227, 2005.

\bibitem{Beem2017}
J.~Beem.
\newblock {\em Global {L}orentzian Geometry, Second Edition}.
\newblock CRC Press, London, 2017.

\bibitem{belishev1987approach}
M.~I. Belishev.
\newblock An approach to multidimensional inverse problems for the wave equation.
\newblock In {\em Doklady Akademii Nauk}, volume 297, pages 524--527. Russian Academy of Sciences, 1987.

\bibitem{burago2010boundary}
D.~Burago and S.~Ivanov.
\newblock Boundary rigidity and filling volume minimality of metrics close to a flat one.
\newblock {\em Annals of mathematics}, pages 1183--1211, 2010.

\bibitem{carstea2023remarks}
C.~C{\^a}rstea, A.~Feizmohammadi, and L.~Oksanen.
\newblock Remarks on the anisotropic {C}alder{\'o}n problem.
\newblock {\em Proceedings of the American Mathematical Society}, 151(10):4461--4473, 2023.

\bibitem{croke2014scattering}
C.~Croke.
\newblock Scattering rigidity with trapped geodesics.
\newblock {\em Ergodic Theory and Dynamical Systems}, 34(3):826--836, 2014.

\bibitem{croke2000local}
C.~Croke, N.~Dairbekov, and V.~Sharafutdinov.
\newblock Local boundary rigidity of a compact {R}iemannian manifold with curvature bounded above.
\newblock {\em Transactions of the American Mathematical Society}, 352(9):3937--3956, 2000.

\bibitem{croke2004rigidity}
C.~B. Croke.
\newblock Rigidity theorems in {R}iemannian geometry.
\newblock In {\em Geometric methods in inverse problems and PDE control}, pages 47--72. Springer, 2004.

\bibitem{croke2011lens}
C.~B. Croke and P.~Herreros.
\newblock Lens rigidity with trapped geodesics in two dimensions.
\newblock {\em arXiv preprint arXiv:1108.4938}, 2011.

\bibitem{dos2009limiting}
D.~Dos Santos~Ferreira, C.~E. Kenig, M.~Salo, and G.~Uhlmann.
\newblock Limiting {C}arleman weights and anisotropic inverse problems.
\newblock {\em Inventiones mathematicae}, 178:119--171, 2009.

\bibitem{Dui}
J.~Duistermaat.
\newblock {\em Fourier integral operators}, volume 130 of {\em Progress in Mathematics}.
\newblock Birkh\"auser Boston, Inc., Boston, MA, 1996.

\bibitem{DuiHor}
J.~Duistermaat and L.~H\"ormander.
\newblock Fourier integral operators. ii.
\newblock {\em Acta Math.}, 128(3-4):183--269, 1972.

\bibitem{Duistermaat2010}
J.~J. Duistermaat.
\newblock {\em Fourier Integral Operators}.
\newblock Springer Basel AG, 2010.

\bibitem{Duistermaat1972}
J.~J. Duistermaat and L.~H{\"o}rmander.
\newblock Fourier integral operators. {II}.
\newblock {\em Acta Mathematica}, 128(0):183--269, 1972.

\bibitem{eskin2007inverse}
G.~Eskin.
\newblock Inverse hyperbolic problems with time-dependent coefficients.
\newblock {\em Communications in Partial Differential Equations}, 32(11):1737--1758, 2007.

\bibitem{ferreira2016calderon}
D.~D.~S. Ferreira, Y.~Kurylev, M.~Lassas, and M.~Salo.
\newblock The {C}alder{\'o}n problem in transversally anisotropic geometries.
\newblock {\em Journal of the European Mathematical Society}, 18(11):2579--2626, 2016.

\bibitem{gel1954some}
I.~Gel’fand.
\newblock Some aspects of functional analysis and algebra.
\newblock In {\em Proceedings of the International Congress of Mathematicians, Amsterdam}, volume~1, pages 253--276, 1954.

\bibitem{guillarmou2017lens}
C.~Guillarmou.
\newblock Lens rigidity for manifolds with hyperbolic trapped sets.
\newblock {\em Journal of the American Mathematical Society}, 30(2):561--599, 2017.

\bibitem{hawking2023large}
S.~W. Hawking and G.~F. Ellis.
\newblock {\em The large scale structure of space-time}.
\newblock Cambridge university press, 2023.

\bibitem{herreros2011scattering}
P.~Herreros and J.~Vargo.
\newblock Scattering rigidity for analytic {R}iemannian manifolds with a possible magnetic field.
\newblock {\em Journal of Geometric Analysis}, 21:641--664, 2011.

\bibitem{hintz2025}
P.~Hintz.
\newblock {\em An Introduction to Microlocal Analysis}.
\newblock Graduate Texts in Mathematics. Springer Cham, 1 edition, 2025.

\bibitem{Hintz2017}
P.~Hintz and G.~Uhlmann.
\newblock Reconstruction of {L}orentzian manifolds from boundary light observation sets.
\newblock {\em International Mathematics Research Notices}, 2019(22):6949--6987, 2017.

\bibitem{Hoermander1971}
L.~H{\"o}rmander.
\newblock Fourier integral operators. i.
\newblock {\em Acta Mathematica}, 127(0):79--183, 1971.

\bibitem{Hor4}
L.~H\"ormander.
\newblock {\em The Analysis of Linear Partial Differential Operators IV}.
\newblock Springer, Berlin Heidelberg, New York, 1990.

\bibitem{MR2304165}
L.~H{\"o}rmander.
\newblock {\em The Analysis of Linear Partial Differential Operators {III}}.
\newblock Classics in Mathematics. Springer, Berlin, 2007.
\newblock Pseudo-differential operators, Reprint of the 1994 edition.

\bibitem{kachalov2001inverse}
A.~Kachalov, Y.~Kurylev, and M.~Lassas.
\newblock {\em Inverse boundary spectral problems}.
\newblock Chapman and Hall/CRC, 2001.

\bibitem{MR3221605}
C.~Kenig and M.~Salo.
\newblock Recent progress in the {C}alder\'on problem with partial data.
\newblock In {\em Inverse problems and applications}, volume 615 of {\em Contemp. Math.}, pages 193--222. Amer. Math. Soc., Providence, RI, 2014.

\bibitem{kurylev2018inverse}
Y.~Kurylev, L.~Oksanen, and G.~P. Paternain.
\newblock Inverse problems for the connection {L}aplacian.
\newblock {\em Journal of Differential Geometry}, 110(3):457--494, 2018.

\bibitem{lassas2018inverse}
M.~Lassas.
\newblock Inverse problems for linear and non-linear hyperbolic equations.
\newblock In {\em Proceedings of the International Congress of Mathematicians: Rio de Janeiro 2018}, pages 3751--3771. World Scientific, 2018.

\bibitem{lassas2010inverse}
M.~Lassas and L.~Oksanen.
\newblock An inverse problem for a wave equation with sources and observations on disjoint sets.
\newblock {\em Inverse Problems}, 26(8):085012, 2010.

\bibitem{lassas2014inverse}
M.~Lassas and L.~Oksanen.
\newblock Inverse problem for the {R}iemannian wave equation with {D}irichlet data and {N}eumann data on disjoint sets.
\newblock {\em Duke Mathematical Journal}, 163(6):1071--1103, 2014.

\bibitem{lassas2016determination}
M.~Lassas, L.~Oksanen, and Y.~Yang.
\newblock Determination of the spacetime from local time measurements.
\newblock {\em Mathematische Annalen}, 365(1):271--307, 2016.

\bibitem{lassas2003semiglobal}
M.~Lassas, V.~Sharafutdinov, and G.~Uhlmann.
\newblock Semiglobal boundary rigidity for {R}iemannian metrics.
\newblock {\em Mathematische Annalen}, 325(4):767--793, 2003.

\bibitem{lassas2003dirchlet}
M.~Lassas, M.~Taylor, and G.~Uhlmann.
\newblock The {D}irchlet-to-{N}eumann map for complete {R}iemannian manifolds with boundary.
\newblock {\em Communications in Analysis and Geometry}, 11(2):207--222, 2003.

\bibitem{lassas2001determining}
M.~Lassas and G.~Uhlmann.
\newblock On determining a {R}iemannian manifold from the {D}irichlet-to-{N}eumann map.
\newblock In {\em Annales scientifiques de l'Ecole normale sup{\'e}rieure}, volume~34, pages 771--787, 2001.

\bibitem{lee1989determining}
J.~M. Lee and G.~Uhlmann.
\newblock Determining anisotropic real-analytic conductivities by boundary measurements.
\newblock {\em Communications on Pure and Applied Mathematics}, 42(8):1097--1112, 1989.

\bibitem{Melrose1979}
R.~B. Melrose and G.~A. Uhlmann.
\newblock Lagrangian intersection and the {C}auchy problem.
\newblock {\em Communications on Pure and Applied Mathematics}, 32(4):483--519, 1979.

\bibitem{michel1981rigidite}
R.~Michel.
\newblock Sur la rigidit{\'e} impos{\'e}e par la longueur des g{\'e}od{\'e}siques.
\newblock {\em Inventiones mathematicae}, 65(1):71--83, 1981.

\bibitem{minguzzi2010time}
E.~Minguzzi.
\newblock Time functions as utilities.
\newblock {\em Communications in Mathematical Physics}, 298:855--868, 2010.

\bibitem{muhometov1981problem}
R.~G. Muhometov.
\newblock On a problem of reconstructing {R}iemannian metrics.
\newblock {\em Sibirskii Matematicheskii Zhurnal}, 22(3):119--135, 1981.

\bibitem{muhometov1978problem}
R.~G. Muhometov and V.~G. Romanov.
\newblock On the problem of finding an isotropic {R}iemannian metric in an n-dimensional space.
\newblock In {\em Doklady Akademii Nauk}, volume 243, pages 41--44. Russian Academy of Sciences, 1978.

\bibitem{munoz2024}
S.~Mu{\~n}oz-Thon.
\newblock Scattering rigidity for standard stationary manifolds via timelike geodesics.
\newblock {\em arXiv preprint arXiv:2404.09449}, 2024.

\bibitem{oksanen2024rigidity}
L.~Oksanen and M.~Salo.
\newblock Rigidity in the {L}orentzian {C}alder\'{o}n problem with formally determined data.
\newblock {\em arXiv preprint arXiv:2409.18604}, 2024.

\bibitem{pestov2005two}
L.~Pestov and G.~Uhlmann.
\newblock Two dimensional compact simple {R}iemannian manifolds are boundary distance rigid.
\newblock {\em Annals of mathematics}, pages 1093--1110, 2005.

\bibitem{MR3098658}
M.~Salo.
\newblock The calderón problem on riemannian manifolds.
\newblock In G.~Uhlmann, editor, {\em Inside Out: Inverse Problems and Applications II}, volume~60 of {\em MSRI Publications}, pages 167--247. Cambridge University Press, 2013.

\bibitem{stefanov2024boundary}
P.~Stefanov.
\newblock Boundary determination and local rigidity of analytic metrics in the {L}orentzian scattering rigidity problem.
\newblock {\em arXiv preprint arXiv:2404.15541}, 2024.

\bibitem{stefanov2024lorentzian}
P.~Stefanov.
\newblock The {L}orentzian scattering rigidity problem and rigidity of stationary metrics.
\newblock {\em The Journal of Geometric Analysis}, 34(9):267, 2024.

\bibitem{stefanov1998rigidity}
P.~Stefanov and G.~Uhlmann.
\newblock Rigidity for metrics with the same lengths of geodesics.
\newblock {\em Mathematical Research Letters}, 5:83--96, 1998.

\bibitem{stefanov2005boundary}
P.~Stefanov and G.~Uhlmann.
\newblock Boundary rigidity and stability for generic simple metrics.
\newblock {\em Journal of the American Mathematical Society}, 18(4):975--1003, 2005.

\bibitem{stefanov2009local}
P.~Stefanov and G.~Uhlmann.
\newblock Local lens rigidity with incomplete data for a class of non-simple {R}iemannian manifolds.
\newblock {\em Journal of differential geometry}, 82(2):383--409, 2009.

\bibitem{stefanov2016boundary}
P.~Stefanov, G.~Uhlmann, and A.~Vasy.
\newblock Boundary rigidity with partial data.
\newblock {\em Journal of the American Mathematical Society}, 29(2):299--332, 2016.

\bibitem{stefanov2021local}
P.~Stefanov, G.~Uhlmann, and A.~Vasy.
\newblock Local and global boundary rigidity and the geodesic x-ray transform in the normal gauge.
\newblock {\em Annals of Mathematics}, 194(1):1--95, 2021.

\bibitem{Stefanov2018}
P.~Stefanov and Y.~Yang.
\newblock The inverse problem for the {D}irichlet-to-{N}eumann map on {L}orentzian manifolds.
\newblock {\em Analysis \& PDE}, 11(6):1381--1414, 2018.

\bibitem{GUsurvey}
G.~Uhlmann.
\newblock 30 years of {C}alder\'on's problem.
\newblock In {\em S\'eminaire {L}aurent {S}chwartz---\'Equations aux d\'eriv\'ees partielles et applications. {A}nn\'ee 2012--2013}, S\'emin. \'Equ. D\'eriv. Partielles, pages Exp. No. XIII, 25. \'Ecole Polytech., Palaiseau, 2014.

\bibitem{uhlmann2016inverse}
G.~Uhlmann and A.~Vasy.
\newblock The inverse problem for the local geodesic ray transform.
\newblock {\em Inventiones mathematicae}, 205(1):83--120, 2016.

\bibitem{uhlmann2003boundary}
G.~Uhlmann and J.-N. Wang.
\newblock Boundary determination of a {R}iemannian metric by the localized boundary distance function.
\newblock {\em Advances in applied mathematics}, 31(2):379--387, 2003.

\bibitem{uhlmann2021travel}
G.~Uhlmann, Y.~Yang, and H.~Zhou.
\newblock Travel time tomography in stationary spacetimes.
\newblock {\em The Journal of Geometric Analysis}, pages 1--24, 2021.

\bibitem{vargo2009lens}
J.~Vargo.
\newblock A proof of lens rigidity in the category of analytic metrics.
\newblock {\em Mathematical Research Letters}, 16(6):1057--1069, 2009.

\bibitem{wang2023rigidity}
Y.~WANG.
\newblock Rigidity of {L}orentzian metrics with the same scattering relations.
\newblock 2023.

\end{thebibliography}

\end{document}